%% file: 0main.tex
\documentclass{article}

    \PassOptionsToPackage{numbers, compress}{natbib}

\usepackage[preprint]{neurips_2026}

\usepackage[utf8]{inputenc} 
\usepackage[T1]{fontenc}    
\usepackage{hyperref}       
\usepackage{url}            
\usepackage{booktabs}       
\usepackage{amsfonts}       
\usepackage{nicefrac}       
\usepackage{microtype}      
\usepackage{xcolor}         

\usepackage{algorithmic,algorithm} 

\usepackage{amsmath}
\usepackage{amssymb}
\usepackage{mathtools}
\usepackage{amsthm}
\usepackage[capitalize,noabbrev]{cleveref}

\usepackage{multirow}
\usepackage{subcaption}
\usepackage{wrapfig}

\theoremstyle{plain}
\newtheorem{theorem}{Theorem}[section]
\newtheorem{proposition}[theorem]{Proposition}
\newtheorem{lemma}[theorem]{Lemma}

\theoremstyle{definition}
\newtheorem{definition}[theorem]{Definition}
\newtheorem{assumption}[theorem]{Assumption}
\theoremstyle{remark}
\newtheorem{remark}[theorem]{Remark}

\usepackage[textsize=tiny]{todonotes}

\usepackage[normalem]{ulem} 

\newcommand{\bm}[1]{\boldsymbol{#1}}

\newcommand{\va}{{\mathbf{a}}}

\newcommand{\vg}{{\mathbf{g}}}

\newcommand{\vv}{{\mathbf{v}}}
\newcommand{\vw}{{\mathbf{w}}}
\newcommand{\vx}{{\mathbf{x}}}
\newcommand{\vy}{{\mathbf{y}}}
\newcommand{\vz}{{\mathbf{z}}}

\newcommand{\vlam}{{\bm{\lambda}}}

\newcommand{\RR}{\mathbb{R}} 

\newcommand{\dist}{\mathrm{dist}}    
\newcommand{\prox}{{\mathbf{prox}}} 
\newcommand{\dom}{{\mathrm{dom}}} 

\def\tx{{\hat x}}
\def\ty{{\hat y}}

\def\h{\bar h}

\DeclareMathOperator*{\argmin}{arg\,min} 

\newcommand{\weiran}[1]{\textcolor{red}{#1 (Weiran)}}

\newcommand{\bc}{\begin{center}}
\newcommand{\ec}{\end{center}}

\title{Penalty-Based First-Order Methods for Bilevel Optimization with Minimax and Constrained Lower-Level Problems}

\author{%
  Yiyang Shen\thanks{Equal contribution. Author ordering determined by a coin flip.}, Yutian He\footnotemark[1], Weiran Wang, Qihang Lin \\
  University of Iowa\\
  Iowa City, IA \\
  \texttt{\{yiyang-shen,yutian-he,weiran-wang,qihang-lin\}@uiowa.edu} \\
}

\begin{document}

\maketitle
\vspace{-2.5em}
\begin{center}
Code: \url{https://github.com/weiranwang19/minimax}
\end{center}

\begin{abstract}
We study a class of bilevel optimization problems in which both the upper- and lower-level problems have minimax structures. This setting captures a broad range of emerging applications. Despite the extensive literature on bilevel optimization and minimax optimization separately, existing methods mainly focus on bilevel optimization with lower-level minimization problems, often under strong convexity assumptions, and are not directly applicable to the minimax lower-level setting considered here. To address this gap, we develop penalty-based first-order methods for bilevel minimax optimization without requiring strong convexity of the lower-level problem. In the deterministic setting, we establish that the proposed method finds an $\epsilon$-KKT point with $\tilde{O}(\epsilon^{-4})$ oracle complexity. We further show that bilevel problems with convex constrained lower-level minimization can be reformulated as special cases of our framework via Lagrangian duality, leading to an $\tilde{O}(\epsilon^{-4})$ complexity bound that improves upon the existing $\tilde{O}(\epsilon^{-7})$ result. Finally, we extend our approach to the stochastic setting, where only stochastic gradient oracles are available, and prove that the proposed stochastic method finds a nearly $\epsilon$-KKT point with $\tilde{O}(\epsilon^{-9})$ oracle complexity. 
\end{abstract}

\input{1intro}

\input{2related}
\input{3methods}
\input{4exps}

\newpage
\bibliographystyle{plainnat}
\bibliography{opt}

\newpage
\input{apdx}

\end{document}

%% file: 1intro.tex
\section{Introduction}\label{sec:intro}
\vspace*{-1ex}

In this paper, we consider the following bilevel optimization problem with minimax  upper- and lower-level problems:
\begin{align}\label{eq:bilevel}
    \min_{\vx_1,\vy_1,\vy_2}&~\left\{\max_{\vx_2} f(\vx_1, \vx_2, \vy_1, \vy_2)\Big |
    (\vy_1, \vy_2) \in \arg\min_{\vz_1} \max_{\vz_2} \tilde{f}(\vx_1, \vz_1, \vz_2)\right\},
\end{align}
where $(\vy_1, \vy_2) \in \arg\min\limits_{\vz_1} \max\limits_{\vz_2} \tilde{f}(\vx_1, \vz_1, \vz_2)$ means $(\vy_1, \vy_2)$ is a saddle-point of $ \tilde{f}(\vx_1, \cdot, \cdot)$, i.e.,
$$
\vy_1 \in \arg \min_{\vz_1}\tilde{f}(\vx_1, \vz_1, \vy_2) \quad\text{and}\quad \vy_2 \in \arg\max_{\vz_2} \tilde{f}(\vx_1, \vy_1, \vz_2).
$$
We assume  the saddle-point of $ \tilde{f}(\vx_1, \cdot, \cdot)$ always exists for any $\vx_1$ and assume
\begin{align}
    f(\vx_1, \vx_2, \vy_1, \vy_2)
    &= f_1(\vx_1, \vx_2, \vy_1, \vy_2) + f_2(\vx_1) - f_3(\vx_2),
    \label{eq:f_structure}\\
    \tilde{f}(\vx_1, \vy_1, \vy_2)
    &= \tilde{f}_1(\vx_1, \vy_1, \vy_2) + \tilde{f}_2(\vy_1) - \tilde{f}_3(\vy_2).
    \label{eq:tildef_structure}
\end{align}
where   $f_2$, $f_3$, $\tilde{f}_{2}$, and $\tilde{f}_{3}$ are proper closed convex functions, $f_1$ is smooth and concave on $\vx_2$, and $\tilde{f}_1$ is smooth, convex in $\vy_1$ and concave in $\vy_2$.

Bilevel optimization has found numerous applications in machine learning, including but not limited to hyperparameter tuning~\citep{bennett2008bilevel,franceschi2018bilevel}, meta-learning~\citep{franceschi2018bilevel,bertinetto2018meta,rajeswaran2019meta}, reinforcement learning~\citep{hong2023two,yang2024bilevel,li2024learning}, and neural architecture search~\citep{liu2018darts}. 
On the other hand, minimax optimization problems also arise in a wide range of applications, such as robust learning~\citep{duchi2021learning}, distributionally robust optimization~\citep{ben2013robust}, generative adversarial networks~\citep{goodfellow2014generative}, and adversarial training~\citep{madry2018towards}. 
Incorporating such minimax problems as the lower-level problem in bilevel optimization leads to a broad class of novel and practically relevant applications of~\eqref{eq:bilevel}. We will present the examples of~\eqref{eq:bilevel} from hyperparameter tuning for 
distributionally robust deep learning in~Section~\ref{sec:exp} with numerical results. Additional applications of \eqref{eq:bilevel} are given in Section~\ref{sec:applications}.

Despite the extensive literature on both bilevel optimization and minimax optimization, existing bilevel formulations typically assume that the lower-level problem is a single-objective minimization problem (often strongly convex), with or without constraints. 
The setting where the lower-level problem itself is a minimax problem remains largely unexplored. To fill this gap, this work proposes both deterministic and stochastic first-order algorithms for \eqref{eq:bilevel} based on a value-function penalty reformulation. Such reformulation has been studied for bilevel optimization with a lower-level minimization problem~\citep{shen2023penalty,yaoovercoming,yaoconstrained,jiang2024primal,kwonpenalty,kwon2023fully,JMLR:v26:23-1104,lu2025solving, lu2024firstorder} but has not been studied for \eqref{eq:bilevel}. We establish the (stochastic) gradient oracle complexity of the proposed methods for finding an $\epsilon$-KKT point of \eqref{eq:bilevel} (see Definition~\ref{def:epsilon_KKT}) in the deterministic case and for finding a nearly $\epsilon$-KKT point of \eqref{eq:bilevel} (see Definition~\ref{def:epsilon_KKT_nearly}) in the stochastic case.

Furthermore, we consider a bilevel optimization problem in which the lower-level problem is a convex minimization problem with convex nonlinear inequality constraints. In particular, we study the following problem
\begin{gather}\label{eq:blo_cons}
\min_{\vx_1, \vy_1 } \left\{f(\vx_1, \vy_1) \bigg| \vy_1 \in \arg \min_{\vz_1}  \left\{\bar{f}(\vx_1, \vz_1)\big|\bar{\vg}(\vx_1, \vz_1)\leq 0\right\}\right\},
\end{gather}
where $\bar{\vg}(\vx_1, \vy_1)=(\bar{g}_1(\vx_1, \vy_1),\dots,\bar{g}_l(\vx_1, \vy_1))\in\mathbb{R}^l$ and $\bar{g}_i(\vx_1, \vy_1)$ is smooth and convex in $\vy_1$ and satisfies Slater's condition for $i=1,\dots,l$. We also assume  
     \begin{align}
        \label{eq:f_structure_cons}
            f(\vx_1,\vy_1) =~ f_1(\vx_1,  \vy_1) + f_2(\vx_1),\qquad 
            \bar{f}(\vx_1, \vy_1) = ~\bar{f}_1(\vx_1, \vy_1) + \bar{f}_2(\vy_1) ,
        \end{align}
where  $f_2$ and $\bar{f}_{2}$ are proper closed convex functions, $f_1$ is smooth, and $\bar{f}_1$ is smooth and convex in $\vy_1$. Assuming 
Slater's condition holds, strong duality holds for the lower-level problem and any of its optimal Lagrangian multiplier must fall in a box $[0,B]^l$ for some constant $B$, we can reformulate \eqref{eq:blo_cons} as a special case of \eqref{eq:bilevel} as follows
\begin{gather}\label{eq:cons_minmax}
    \min_{\vx_1, \vy_1, \vy_2}\left\{f(\vx_1, \vy_1)\bigg|
   (\vy_1, \vy_2) \in \arg \min_{\vz_1} \max_{\vz_2 } \tilde{f}(\vx_1, \vz_1, \vz_2)\right\},
\end{gather}
where there is no variable $\vx_2$ in the upper-level problem and 
\begin{align}
\label{eq:tildef_cons}
    \tilde{f}(\vx_1, \vy_1, \vy_2) =  \bar{f}(\vx_1, \vy_1) + \vy_2^\top\bar{\vg}(\vx_1, \vy_1)-\mathbf{1}_{[0,B]^l}(\vy_2)
\end{align}
with 
$\mathbf{1}_{[0,B]^l}(\cdot)$ being the indicator function of the set $[0,B]^l$. It follows that $\tilde{f}$ in \eqref{eq:tildef_cons} satisfies \eqref{eq:f_structure} with
$\tilde{f}_1(\vx_1, \vy_1, \vy_2) = \vy_2^\top\bar{\vg}(\vx_1, \vy_1) + \bar{f}_1(\vx_1, \vy_1) $, $\tilde{f}_2(\vy_1) = \bar{f}_2( \vy_1)$, and $\tilde{f}_3 (\vy_2) = \mathbf{1}_{[0,B]^l}(\vy_2)$.
We are then able to apply the proposed first-order method to \eqref{eq:blo_cons} and show that our method finds an $\epsilon$-KKT point of \eqref{eq:blo_cons} (see Definition~\ref{def:cons_kkt}) with a lower oracle complexity than \citet{lu2025solving, lu2024firstorder}.


\textbf{Summary of contributions.}
\begin{enumerate}
    \item We propose a deterministic first-order method for~\eqref{eq:bilevel} based on a penalty reformulation, and show that it finds an $\epsilon$-KKT point of~\eqref{eq:bilevel} with $\tilde{O}(\epsilon^{-4})$ oracle complexity. We further extend the proposed method to the stochastic setting and show that our method finds a nearly $\epsilon$-KKT point of~\eqref{eq:bilevel} with $\tilde{O}(\epsilon^{-9})$ stochastic oracle complexity. To the best of our knowledge, these are the first complexity results for deterministic and stochastic first-order methods for~\eqref{eq:bilevel}.

    \item We apply the proposed method to~\eqref{eq:blo_cons} after reformulating it as \eqref{eq:cons_minmax}, which is an instance of~\eqref{eq:bilevel}. 
    This reformulation holds due to a uniform bound on the dual variables guaranteed by Slater's condition. 
    The resulting method finds an $\epsilon$-KKT point of~\eqref{eq:blo_cons} with complexity $\tilde{O}(\epsilon^{-4})$, improving upon the best existing complexity bound of $\tilde{O}(\epsilon^{-7})$ for a merely convex case lower-level problem~\citep{lu2025solving} and $\tilde{O}(\epsilon^{-6})$ for a strongly convex lower-level problem~\citep{lu2024firstorder}.

    \item We demonstrate the proposed algorithms on both synthetic and real-world datasets. On~\eqref{eq:blo_cons}, our method outperforms existing penalty-based methods in terms of convergence speed. We further consider a bilevel formulation for hyperparameter tuning~\eqref{eq:dro_nested} of a real-world distributionally robust optimization problem~\citep{sagawa2020distributionally}, which gives rise to an instance of~\eqref{eq:bilevel} that substantially improves over an existing DRO method in terms of test worst-group accuracy.
\end{enumerate}


%% file: 2related.tex
\section{Related works}
\label{sec:related}
\vspace*{-1ex}


Most 
first-order methods for bilevel optimization focus
on the smooth setting in which the lower-level problem is a strongly convex constrained or unconstrained minimization problem. 
Under this assumption (and constraint qualification in the constrained case), the lower-level solution is a single-valued and differentiable mapping of the upper-level solution, and the gradient of the outer objective can be computed through the chain rule, the implicit function theorem and Danskin's theorem. This leads to the class of hypergradient or implicit-gradient methods 
\citep{pedregosa2016hyperparameter,
ghadimi2018approximation,rajeswaran2019meta,grazzi2020iteration,ji2021bilevel,
yang2021provably,guo2021randomized,chen2021closing,tsaknakis2022implicit,huang2022enhanced,chen2022fast,chen2022single,li2022fully,khanduri2021near,xiao2022alternating,liu2022general,xu2023efficient,hu2023improved,hong2023two,kwon2023fully,JMLR:v26:23-1104,jiang2024primal}.
Their assumptions exclude applications in which the lower-level problem
is merely convex and has multiple solutions. 

Several recent works have replaced the lower-level strong convexity
assumption to other assumptions, such as the Polyak-{\L}ojasiewicz condition, or consider an optimization framework without using a hypergradient, for example, the penalty-based, value-function-based, and optimality-condition-based approaches~\citep{liu2021value,liu2023value,liu2021towards,arbel2022non,liu2023averaged,chen2024finding,sow2022primal,liu2022bome,kwonpenalty,xiao2023alternating,huang2023momentum,huang2024optimal,huang2023adaptive}. \citet{shen2023penalty,shen2025penalty} consider a generic penalty method when the lower-level optimal set allows a squared-distance bound function. These methods can be applied to bilevel optimization with merely convex or non-convex lower-level objectives, 
but not directly to~\eqref{eq:bilevel} whose lower-level problem
has a minimax structure.


More recently, \citet{lu2024firstorder} proposed a first-order penalty framework that
solves bilevel optimization through a minimax reformulation. Their method finds an $\epsilon$-KKT point of \eqref{eq:blo_cons}
with complexity $\tilde{O}(\epsilon^{-4})$ and $\tilde{O}(\epsilon^{-7})$ when the lower-level problem is unconstrained and constrained, respectively. 
By a similar penalty approach, \citet{lu2025solving} developed a sequential minimax optimization
method that reduces the complexity to $\tilde{O}(\epsilon^{-6})$ when the lower-level problem is constrained but has a strongly convex objective function. Our work is motivated by \citet{lu2024firstorder,lu2025solving}, but studies a substantially more general
bilevel minimax structure in~\eqref{eq:bilevel}, which covers their problem as a special case. When the lower-level problem is unconstrained, we obtain the same  $\tilde{O}(\epsilon^{-4})$ complexity for finding an $\epsilon$-KKT point, but improve their complexity from $\tilde{O}(\epsilon^{-7})$ and $\tilde{O}(\epsilon^{-6})$ to $\tilde{O}(\epsilon^{-4})$ when there are constraints. 





\citet{ouattara2018duality,jiang2024primal,nie2025augmented} study bilevel problems in which the lower-level problem involves nonlinear constraints, and reformulate the bilevel problem as a single-level problem via Lagrangian duality in a way similar to our approach. 
\citet{ouattara2018duality} do not provide a complexity analysis, however. 
\citet{jiang2024primal} and \citet{nie2025augmented} consider a penalty formulation of the single-level problem, but require strong convexity of the lower-level problem to ensure uniqueness of the lower-level solution; this assumption is not needed in our work. 
\citet{yaoconstrained} propose a related reformulation based on a proximal Lagrangian value function and, similarly to our approach, impose a box constraint on the dual variable. 
However, their algorithm requires projection onto the constraint set, which can be computationally inefficient. 
In \citep{yaoovercoming}, the authors propose a single-level reformulation by expressing the lower-level constrained problem as a minimax problem and incorporating it as a penalty term. 
This approach is similar to ours, but they introduce additional proximal terms into the minimax formulation to ensure smoothness of the reformulated problem. 
Moreover, their convergence analysis is established in terms of a stationarity measure for the penalty problem, without relating it to a stationarity measure of the original bilevel problem. 
In addition, their method does not guarantee $\epsilon$-feasibility, while ours does. In \citep{yaoovercoming}[Section 5], the authors mention that their approach can be extended to \eqref{eq:bilevel} but without providing an anaysis.

Finally, \citet{khanduri2023linearly,kornowski2024first,tsaknakis2022implicit} study bilevel problems in which the lower-level constraints are linear, whereas we allow general nonlinear convex constraints. 
\citet{li2023novel,allende2013solving} consider reformulations of~\eqref{eq:blo_cons} based on Wolfe duality and the KKT conditions of the lower-level problem, respectively, which can accommodate general lower-level constraints. 
\citet{ye2023difference} and \citet{gao2022value} adopt value-function-based
reformulations and solve~\eqref{eq:blo_cons} using techniques from
difference-of-convex optimization.. 
However, these works do not provide complexity guarantees for finding an $\epsilon$-KKT point. 
Recent works by \citet{khalafi2025regularized,dvurechensky2026stochastic} study stochastic bilevel variational inequality problems. 
Although minimax problems can be formulated as variational inequality problems, their models use the same decision variables at the upper and lower levels, which is substantially different from our setting.

%% file: 3methods.tex
\section{Preliminaries and assumptions}
\label{sec:preliminaries}
\vspace*{-1ex}


A function or mapping $\phi$ on $\mathbb{R}^n$ is \emph{$L_{\phi}$-Lipschitz} if $\|\phi(\vx)-\phi(\vx')\| \leq L_{\phi} \|\vx-\vx'\|$ for any $\vx$ and $\vx'$ in $\mathbb{R}^n$, and \emph{$L_{\nabla\phi}$-smooth} if it is differentiable and $\|\nabla\phi(\vx)-\nabla\phi(\vx')\| \leq L_{\nabla\phi} \|\vx-\vx'\|$ for any $\vx$ and $\vx'$ in $\mathbb{R}^n$. For a proper closed convex function $\phi$ on $\mathbb{R}^n$, its  \emph{proximal mapping} 
$\prox_{\phi}(\cdot)$, is defined as 
\[
\prox_{\phi}(\vx) := \argmin\nolimits_{\vx'\in\mathbb{R}^n} \left\{ \frac{1}{2}\|\vx' - \vx\|^2 + \phi(\vx') \right\}.
\]
For a proper closed function $\phi$ on $\mathbb{R}^n$, its \emph{subdifferential} is defined as
\[
\partial \phi(\vx)
:=
\left\{
v\in \mathbb{R}^n:
\phi(\vx')
\ge
\phi(\vx)+\langle v,\vx'-\vx\rangle
+o(\|\vx'-\vx\|),\quad\text{as } \vx'\to \vx
\right\}.
\]
Any vector $v\in \partial \phi(\vx)$ is called a \emph{subgradient} of $\phi$ at $\vx$. If $\vz$ is a sub-vector of $\vx$,  we use $\partial_{\vz} \phi(\vx)$ to denote the (partial) subdifferential of $\phi$ with respect to $\vz$. 

We define the primal and dual objective functions of the lower-level problem in \eqref{eq:bilevel} as
\begin{align}
\label{eq:prima_dual_obj}
p(\vx_1,\vy_1):=\max_{\vz_2} \tilde{f}(\vx_1, \vy_1, \vz_2)
\quad\text{and}\quad 
d(\vx_1,\vy_2):=\min_{\vz_1} \tilde{f}(\vx_1, \vz_1, \vy_2),
\end{align}
respectively, which can be non-smooth. By weak duality, it holds that $p(\vx_1,\vy_1)\geq d(\vx_1,\vy_2)$ for any $\vx_1$, $\vy_1$ and $\vy_2$, and $(\vy_1, \vy_2)$ is the saddle-point of $ \tilde{f}(\vx_1, \cdot, \cdot)$ if and only if $p(\vx_1,\vy_1)= d(\vx_1,\vy_2)$. Therefore, bilevel optimization \eqref{eq:bilevel} is essentially a non-convex concave minimax problem with a non-convex non-smooth inequality constraint, i.e., 
\begin{align}\label{eq:bilevel_equality}
    \min_{\vx_1,\vy_1,\vy_2}\left\{\max_{\vx_2} f(\vx_1, \vx_2, \vy_1, \vy_2)\Big|p(\vx_1,\vy_1)-d(\vx_1,\vy_2)\leq0\right\}.
\end{align}

We make the following assumptions throughout the paper. 
\begin{assumption}\label{assump:dom}Problem \eqref{eq:bilevel} has the structures in \eqref{eq:f_structure} and \eqref{eq:tildef_structure} and the following statements hold.
    \begin{enumerate}
    \item $\mathcal{X}_1:=\dom f_2$, $\mathcal{X}_2:=\dom f_3$, $\mathcal{Y}_1:=\dom\tilde{f}_2$, and $\mathcal{Y}_2:=\dom\tilde{f}_3$ are compact.
        \item $f_1(\vx_1, \vx_2, \vy_1, \vy_2)$ is concave in $\vx_2$ and $L_{\nabla f_1}$-smooth on $\mathcal{X}_1 \times \mathcal{X}_2 \times \mathcal{Y}_1 \times \mathcal{Y}_2$. $\tilde{f}_1(\vx_1, \vy_1, \vy_2)$ is convex in $\vy_1$,  concave in $\vy_2$, and $L_{\nabla \tilde{f}_1}$-smooth on $\mathcal{X}_1  \times \mathcal{Y}_1 \times \mathcal{Y}_2$. 
        \item $f_2$, $f_3$, $\tilde{f}_{2}$, and $\tilde{f}_{3}$ are proper closed convex, and their proximal operators can be computed exactly.
    \end{enumerate}
\end{assumption}

We define the following quantities, which are all finite according to Assumption~\ref{assump:dom}.
\begin{align}
         \label{eq:D1}
 D_1 := & \max\{\|(\vx_1,\vy_1,\vy_2) - (\vx_1',\vy_1',\vy_2') '\| \;|\; (\vx_1,\vy_1,\vy_2), (\vx_1',\vy_1',\vy_2') \in \mathcal{X}_1  \times \mathcal{Y}_1 \times \mathcal{Y}_2\},\\   \label{eq:D2}
 D_2 := &\max\{\|(\vx_2,\vz_1,\vz_2) - (\vx_2',\vz_1',\vz_2')\| \;|\; (\vx_2,\vz_1,\vz_2),(\vx_2',\vz_1',\vz_2') \in \mathcal{X}_2  \times \mathcal{Y}_1 \times \mathcal{Y}_2\},\\  
     f_{\text{low}}:=& \min \{f(\vx_1, \vx_2, \vy_1, \vy_2) \;|\; (\vx_1, \vx_2, \vy_1, \vy_2)\in \mathcal{X}_1 \times \mathcal{X}_2 \times \mathcal{Y}_1 \times \mathcal{Y}_2 \}\label{eq:flow} . 
\end{align}

We call $(\vx_1,\vy_1,\vy_2)$ 
a \emph{Karush–Kuhn–Tucker} (KKT) point of \eqref{eq:bilevel} if there exists $\rho\geq 0$ such that
\begin{align*}
&\mathbf{0}\in\partial_{(\vx_1,\vy_1,\vy_2)} \left[\max_{\vx_2} f(\vx_1, \vx_2, \vy_1, \vy_2)\right] +\rho\partial_{(\vx_1,\vy_1,\vy_2)}\left[p(\vx_1,\vy_1)-d(\vx_1,\vy_2)\right],\\
&p(\vx_1,\vy_1)-d(\vx_1,\vy_2)\leq0.
\end{align*}
Given that the maximization problems in $\max_{\vx_2} f(\vx_1, \vx_2, \vy_1, \vy_2)$ and in the definition of $p(\vx_1,\vy_1)-d(\vx_1,\vy_2)$ are concave, by Danskin's theorem~\citep{bertsekas1997nonlinear}, $(\vx_1,\vy_1,\vy_2)$ as a KKT point of \eqref{eq:bilevel} if there exist $\rho\geq 0$ and $(\vx_2,\vz_1,\vz_2)$ such that 
\begin{align*}
&\mathbf{0}\in\partial_{(\vx_1,\vy_1,\vy_2)} \left[ f(\vx_1, \vx_2, \vy_1, \vy_2) +\rho\big(\tilde{f}(\vx_1, \vy_1, \vz_2)-\tilde{f}(\vx_1, \vz_1, \vy_2)\big)\right],\\
&\mathbf{0}\in\partial_{\vx_2}\left[-f(\vx_1, \vx_2, \vy_1, \vy_2)\right],\quad \mathbf{0}\in\rho\partial_{\vz_1}\tilde{f}(\vx_1, \vz_1, \vy_2),\quad \mathbf{0}\in\rho\partial_{\vz_2}\left[-\tilde{f}(\vx_1, \vy_1, \vz_2)\right],  \\
&p(\vx_1,\vy_1)-d(\vx_1,\vy_2)\leq0.
\end{align*}
In general, an exact KKT point can only be approached as a limiting solution of an algorithm. Hence, we consider an $\epsilon$-KKT point of \eqref{eq:bilevel} which satisfies all conditions of a KKT point up to an error of $\epsilon$. 
\begin{definition}
\label{def:epsilon_KKT}
$(\vx_1, \vy_1, \vy_2)$ is an $\epsilon$-KKT point \eqref{eq:bilevel} if there exist $\rho\geq0$ and $(\vx_2, \vz_1, \vz_2)$ such that
\begin{align*}
&\text{dist}\left(\mathbf{0},\partial_{(\vx_1,\vy_1,\vy_2)} \left[ f(\vx_1, \vx_2, \vy_1, \vy_2)+\rho\big(\tilde{f}(\vx_1, \vy_1, \vz_2)-\tilde{f}(\vx_1, \vz_1, \vy_2)\big)\right]\right)\leq\epsilon,\\
&\text{dist}\left(\mathbf{0},\partial_{\vx_2}\left[-f(\vx_1, \vx_2, \vy_1, \vy_2)\right]\right)\leq\epsilon,\quad \text{dist}\left(\mathbf{0},\rho\partial_{\vz_1}\tilde{f}(\vx_1, \vz_1, \vy_2)\right)\leq\epsilon,  \\
&\text{dist}\left(\mathbf{0},\rho\partial_{\vz_2}\left[-\tilde{f}(\vx_1, \vy_1, \vz_2)\right]\right)\leq\epsilon,\quad p(\vx_1,\vy_1)-d(\vx_1,\vy_2)\leq\epsilon.
\end{align*}
\end{definition}
We propose a first-order method for finding an $\epsilon$-KKT point of~\eqref{eq:bilevel}. The efficiency of our  method is characterized by its \emph{oracle complexity}, i.e., the total number of gradient evaluations of $f$ and $\tilde f$.

\section{A deterministic first-order method for \eqref{eq:bilevel} based on penalty formulation}
\label{sec:penalty}
\vspace*{-1ex}

Following the existing literature, e.g.,~\citep{shen2023penalty,kwon2023fully,JMLR:v26:23-1104,lu2025solving, lu2024firstorder}, we reformulate \eqref{eq:bilevel} and \eqref{eq:bilevel_equality} into an unconstrained penalty problem using a penalty parameter $\rho\geq0$:
\begin{align}
\label{eq:old_minmax}
     f^*:=&\min_{\vx_1, \vy_1, \vy_2} \left\{\max_{\vx_2}f(\vx_1, \vx_2, \vy_1, \vy_2) + \rho\left[p(\vx_1,\vy_1)-d(\vx_1,\vy_2)\right]\right\}\\ \label{eq:new_minmax}
     =&\min_{\vx_1, \vy_1, \vy_2}\max_{\vx_2, \vz_1, \vz_2}P_\rho(\vx_1,\vx_2, \vy_1,\vy_2, \vz_1, \vz_2)
\end{align}
where
\begin{align}
\label{eq:lagrange}
    P_\rho(\vx_1,\vx_2, \vy_1,\vy_2, \vz_1, \vz_2) :=&~ f(\vx_1, \vx_2, \vy_1, \vy_2) + \rho\big(\tilde{f}(\vx_1, \vy_1, \vz_2)-\tilde{f}(\vx_1 , \vz_1, \vy_2)\big)\\\label{eq:lagrange_split}
=&~P_1(\vx_1,\vx_2, \vy_1,\vy_2, \vz_1, \vz_2)+P_2(\vx_1,\vy_1,\vy_2)-P_3(\vx_2,\vz_1,\vz_2)\\\label{eq:P1}
P_1(\vx_1,\vx_2, \vy_1,\vy_2, \vz_1, \vz_2):=&~f_1(\vx_1, \vx_2, \vy_1, \vy_2) + \rho\big(\tilde{f}_1(\vx_1, \vy_1, \vz_2)-\tilde{f}_1(\vx_1 , \vz_1, \vy_2)\big),\\\label{eq:P2}
P_2(\vx_1,\vy_1,\vy_2):=&~f_2(\vx_1)+\rho\tilde{f}_2(\vy_1)+\rho\tilde{f}_3(\vy_2)\\\label{eq:P3}
P_3(\vx_2 , \vz_1, \vz_2):=&~f_3(\vx_2)+\rho\tilde{f}_2(\vz_1)+\rho\tilde{f}_3(\vz_2).
\end{align}
The strategy we adopt is to first find an $\epsilon$-primal-dual stationary point of \eqref{eq:new_minmax} defined below using a primal-dual method and show that such a point is also an $O(\epsilon)$-KKT point of \eqref{eq:bilevel}.
\begin{definition}
\label{def:epsilon_stationary}
$(\vx_1, \vy_1, \vy_2,\vx_2, \vz_1, \vz_2)$   is an $\epsilon$-primal-dual stationary point of \eqref{eq:old_minmax} if
\begin{align*}
&\text{dist}\left(\mathbf{0},\partial_{(\vx_1,\vy_1,\vy_2)} \left[ f(\vx_1, \vx_2, \vy_1, \vy_2)+\rho\big(\tilde{f}(\vx_1, \vy_1, \vz_2)-\tilde{f}(\vx_1, \vz_1, \vy_2)\big)\right]\right)\leq\epsilon,\\
&\text{dist}\left(\mathbf{0},\partial_{(\vx_2,\vz_1,\vz_2)} \left[-f(\vx_1, \vx_2, \vy_1, \vy_2) +\rho\big(-\tilde{f}(\vx_1, \vy_1, \vz_2)+\tilde{f}(\vx_1, \vz_1, \vy_2)\big)\right]\right)\leq\epsilon.
\end{align*}
\end{definition}

The lemma below characterizes the relationship between Definition~\ref{def:epsilon_KKT}  and Definition~\ref{def:epsilon_stationary}. The proof can be found in Appendix~\ref{sec:lemma_connection}.

\begin{lemma}
\label{thm:connection}
Suppose $(\vx_1, \vy_1, \vy_2,\vx_2, \vz_1, \vz_2)$   is an $\epsilon$-primal-dual stationary point of \eqref{eq:old_minmax} and 
\begin{align}
\label{eq:feasibility_bound_condition}
\rho^{-1}\left(\max_{\vx_2', \vz_1', \vz_2'}P_{\rho}(\vx_1, \vx_2', \vy_1, \vy_2, \vz_1', \vz_2')- f_{\text{low}}\right)\leq O(\epsilon),
\end{align}
where $P_{\rho}$ and $f_{\text{low}}$ are defined in \eqref{eq:lagrange} and \eqref{eq:flow}.
Then $(\vx_1, \vy_1, \vy_2)$ is an $O(\epsilon)$-KKT  point of~\eqref{eq:bilevel}.
\end{lemma}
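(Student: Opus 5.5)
The plan is to check the conditions of Definition~\ref{def:epsilon_KKT} one at a time, using the same $\rho$ and the same $(\vx_2,\vz_1,\vz_2)$ supplied by the $\epsilon$-primal-dual stationary point.

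\textbf{Primal stationarity.} The first condition of Definition~\ref{def:epsilon_KKT} is literally the first condition of Definition~\ref{def:epsilon_stationary}, so it holds with no work.

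\textbf{Dual stationarity.} I will split the second condition of Definition~\ref{def:epsilon_stationary} block by block. The function
$-f(\vx_1,\vx_2,\vy_1,\vy_2) -\rho\tilde{f}(\vx_1,\vy_1,\vz_2)+\rho\tilde{f}(\vx_1,\vz_1,\vy_2)$
is separable in $(\vx_2,\vz_1,\vz_2)$ with $(\vx_1,\vy_1,\vy_2)$ fixed:
\begin{itemize}
\item the $\vx_2$ part is $-f$;
\item the $\vz_1$ part is $\rho\tilde f(\vx_1,\cdot,\vy_2)$;
\item the $\vz_2$ part is $-\rho\tilde f(\vx_1,\vy_1,\cdot)$.
\end{itemize}
Each part is a smooth function plus a proper closed convex function of its own block. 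Hence its partial subdifferential is the product of the blockwise subdifferentials (sum rule for smooth plus convex, separable nonsmooth part). The distance from $\mathbf{0}$ to a product set is at least the distance of each component. This gives $\dist(\mathbf{0},\partial_{\vx_2}[-f])\le\epsilon$, $\dist(\mathbf{0},\rho\partial_{\vz_1}\tilde f)\le\epsilon$ and $\dist(\mathbf{0},\rho\partial_{\vz_2}[-\tilde f])\le\epsilon$. The only care needed is to confirm that $\rho\,\partial$ is treated correctly when $\rho>0$, which is immediate.

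\textbf{Feasibility.} This step uses \eqref{eq:feasibility_bound_condition}. By the definitions of $p$ and $d$ and separability,
\begin{align*}
\max_{\vx_2',\vz_1',\vz_2'}P_\rho(\vx_1,\vx_2',\vy_1,\vy_2,\vz_1',\vz_2') = \max_{\vx_2'} f(\vx_1,\vx_2',\vy_1,\vy_2)+\rho\big(p(\vx_1,\vy_1)-d(\vx_1,\vy_2)\big).
\end{align*}
The first term is at least $f_{\text{low}}$, since $f_{\text{low}}$ is the minimum of $f$ over the compact domain in \eqref{eq:flow}. Subtracting $f_{\text{low}}$ and dividing by $\rho$ therefore gives $p(\vx_1,\vy_1)-d(\vx_1,\vy_2)\le \rho^{-1}\big(\max P_\rho - f_{\text{low}}\big)\le O(\epsilon)$.

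One point needs checking: the stationary point must lie in the domains, so that $\vx_1\in\mathcal X_1$ and $\vy_i\in\mathcal Y_i$ and the maxima are attained. This holds because the subdifferentials would be empty otherwise, and compactness from Assumption~\ref{assump:dom} then gives attainment.

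The main obstacle I expect is the separability and sum-rule justification for the partial subdifferential in the dual block, in particular with the nonconvex smooth part. The subdifferential in this paper is the Fréchet-type one, so I would invoke the standard rule that for smooth $+$ proper closed, $\partial(g+h)=\nabla g+\partial h$. For a block-separable $h$, $\partial h$ is the product of its blockwise subdifferentials. Everything else is bookkeeping.
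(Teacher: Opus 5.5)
Your proposal is correct and follows the same route as the paper. The first four conditions of Definition~\ref{def:epsilon_KKT} are read off from Definition~\ref{def:epsilon_stationary}; your blockwise separability argument just makes explicit what the paper calls trivial. Feasibility then follows from $\max_{\vx_2',\vz_1',\vz_2'}P_\rho = \max_{\vx_2'} f + \rho\,(p-d)$ together with $\max_{\vx_2'} f(\vx_1,\vx_2',\vy_1,\vy_2)\ge f_{\text{low}}$. That inequality is exactly what this step needs, whereas the paper cites $p-d\ge 0$, which on its own only yields $\max P_\rho\ge f_{\text{low}}$.
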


Based on Lemma~\ref{thm:connection}, our approach will find an $\epsilon$-KKT point of \eqref{eq:bilevel} by  compute an $\epsilon$-primal-dual stationary point $(\vx_1, \vy_1, \vy_2,\vx_2, \vz_1, \vz_2)$ of \eqref{eq:new_minmax} and, simultaneously, ensure it satisfies \eqref{eq:feasibility_bound_condition} by showing that $\max_{\vx_2', \vz_1', \vz_2'}P_{\rho}(\vx_1, \vx_2', \vy_1, \vy_2, \vz_1', \vz_2')=O(1)$ and  setting $\rho=O(\epsilon^{-1})$.

Under Assumption~\ref{assump:dom}, \eqref{eq:new_minmax} is a  non-convex-concave (NCC) minimax problem, which we solve
 by an \emph{inexact proximal point method}, e.g., \citep[Algorithm 2]{lu2023first}, where the solutions are updated by solving a sequence of minimax subproblems:
\small
\begin{gather}
\nonumber
\left(
(\vx_1^{(k+1)}, \vy_1^{(k+1)}, \vy_2^{(k+1)}),(\vx_2^{(k+1)}, \vz_1^{(k+1)}, \vz_2^{(k+1)})
\right)
\approx \arg\min_{\vx_1, \vy_1, \vy_2} \max_{\vx_2, \vz_1, \vz_2} \bigg\{ P_\rho(\vx_1,\vx_2, \vy_1,\vy_2, \vz_1, \vz_2) \\ \label{eq:minmax_prox}
+\frac{\rho_1}{2}\left\|(\vx_1, \vy_1, \vy_2)-(\vx_1^{(k)}, \vy_1^{(k)}, \vy_2^{(k)})\right\|^2
-\frac{\rho_2}{2}\left\|(\vx_2, \vz_1, \vz_2)-(\vx_2^{(k)}, \vz_1^{(k)}, \vz_2^{(k)})\right\|^2
\bigg\},
\end{gather}
\normalsize
where $\rho_1\geq0$ is a proximal parameter and $\rho_2\geq0$ is a smoothing parameter. If $\rho_1$ and $\rho_2$ are set appropriately, subproblem \eqref{eq:minmax_prox} is a strongly-convex-strongly-concave (SCSC) minimax problem, which can be solved using the optimal method from
\citep[Algorithm 4]{kovalev2022first} and \citep[Algorithm 1]{lu2023first}. We present their methods in Algorithm~\ref{mmax-alg1} in Section~\ref{sec:wcc_minmax} and denote it by \texttt{OptFOM}. Using Algorithm~\ref{mmax-alg1} as a subroutine for solving \eqref{eq:minmax_prox} leads to Algorithm~\ref{alg:first-order-new}, whose oracle complexity is presented in Theorem~\ref{thm:minmax} with the proof given in Appendix~\ref{sec:proof_thm1}.

\begin{algorithm}[t]
\caption{A (deterministic or stochastic) inexact proximal point method for~\eqref{eq:bilevel}}
\label{alg:first-order-new}
\begin{algorithmic}[1]
\STATE {\bfseries Input:} $K>0$, $\epsilon>0$, $(\vx_1^{(0)},\vy_1^{(0)},\vy_2^{(0)})\in\mathcal{X}_1 \times \mathcal{Y}_1 \times \mathcal{Y}_2$, $(\vx_2^{(0)},\vz_1^{(0)},\vz_2^{(0)})\in\mathcal{X}_2 \times \mathcal{Y}_1 \times \mathcal{Y}_2$
\STATE Set $\rho= 1/\epsilon,\quad \rho_1= ~2L_{\nabla f}+4\rho L_{\nabla \tilde{f}},\quad  \rho_2=  ~\epsilon/(2D_2)$ in \eqref{eq:minmax_prox}.
\STATE Set $\hat\epsilon=\epsilon^{1.5},\quad L_{\nabla P_1}=L_{\nabla f} + 2\rho L_{\nabla \tilde{f}},\quad\delta_P^2=3\delta_{f}^2 + 6\rho^2\delta_{\tilde{f}}^2$
\FOR{$k=0,1,2,\ldots$}
\STATE Set
\small
\begin{equation*}
\begin{aligned}
\h(x,y) \leftarrow&~P_1(\vx_1,\vx_2, \vy_1,\vy_2, \vz_1, \vz_2)\\
&~+\frac{\rho_1}{2}\left\|(\vx_1, \vy_1, \vy_2)-(\vx_1^{(k)}, \vy_1^{(k)}, \vy_2^{(k)})\right\|^2-\frac{\rho_2}{2}\left\|(\vx_2, \vz_1, \vz_2)-(\vx_2^{(k)}, \vz_1^{(k)}, \vz_2^{(k)})\right\|^2,\\
p(x)\leftarrow&~P_2(\vx_1,\vy_1,\vy_2),\quad q(y)\leftarrow~P_3(\vx_2,\vz_1, \vz_2),\\
    L_{\nabla\h} \leftarrow&~L_{\nabla P_1}+\max\{\rho_1,\rho_2\},\quad
    \sigma_x\leftarrow~L_{\nabla P_1},\quad
    \sigma_y\leftarrow~\epsilon/(2D_2),\quad \delta^2\leftarrow~\delta_P^2
\end{aligned}
\end{equation*}
\normalsize
\IF {\eqref{eq:bilevel} is deterministic} 
\STATE  Apply \texttt{OptFOM} in Algorithm~\ref{mmax-alg1} as follows.
\small
\begin{align*}
&\left((\vx_1^{(k+1)}, \vy_1^{(k+1)}, \vy_2^{(k+1)}),(\vx_2^{(k+1)}, \vz_1^{(k+1)}, \vz_2^{(k+1)})\right)\\
&\leftarrow\texttt{OptFOM}\left(\epsilon_k:=\frac{\hat{\epsilon}}{k+1},(\vx_1^{(k)}, \vy_1^{(k)}, \vy_2^{(k)}),(\vx_2^{(k)}, \vz_1^{(k)}, \vz_2^{(k)}), \bar{h}, p,q,L_{\nabla \h},\sigma_x,\sigma_y\right)
\end{align*}
\normalsize
\IF{
$\left\|
(\vx_1^{(k+1)}, \vy_1^{(k+1)}, \vy_2^{(k+1)})
-
(\vx_1^{(k)}, \vy_1^{(k)}, \vy_2^{(k)})
\right\|
\leq
\epsilon/(4L_{\nabla P_1})$
}
\STATE Return \small$\left((\vx_1^{\epsilon}, \vy_1^{\epsilon}, \vy_2^{\epsilon}),(\vx_2^{\epsilon}, \vz_1^{\epsilon}, \vz_2^{\epsilon})\right)=\left((\vx_1^{(k+1)}, \vy_1^{(k+1)}, \vy_2^{(k+1)}),(\vx_2^{(k+1)}, \vz_1^{(k+1)}, \vz_2^{(k+1)})\right)$ \normalsize
\ENDIF
\ELSE
\STATE  Apply \texttt{SAPD} in Algorithm~\ref{alg:sapd} as follows.
\small
\begin{align*}
&\left((\vx_1^{(k+1)}, \vy_1^{(k+1)}, \vy_2^{(k+1)}),(\vx_2^{(k+1)}, \vz_1^{(k+1)}, \vz_2^{(k+1)})\right)\\
&\leftarrow\texttt{SAPD}\left(\hat{\epsilon},(\vx_1^{(k)}, \vy_1^{(k)}, \vy_2^{(k)}),(\vx_2^{(k)}, \vz_1^{(k)}, \vz_2^{(k)}), \bar{h}, p,q,L_{\nabla \h},\sigma_x,\sigma_y,\delta^2\right)
\end{align*}
\normalsize
\vspace{-0.3cm}
\IF{$k=K-1$}
\STATE Return \small$\left((\vx_1^{\epsilon}, \vy_1^{\epsilon}, \vy_2^{\epsilon}),(\vx_2^{\epsilon}, \vz_1^{\epsilon}, \vz_2^{\epsilon})\right)=\left((\vx_1^{(k')}, \vy_1^{(k')}, \vy_2^{(k')}),(\vx_2^{(k')}, \vz_1^{(k')}, \vz_2^{(k')})\right)$, 
\normalsize
where $k'$ is uniformly randomly sampled from $\{1,2,\dots,K\}$.
\ENDIF
\ENDIF
\ENDFOR
\end{algorithmic}
\end{algorithm}

\begin{theorem}\label{thm:minmax}
Suppose Assumption~\ref{assump:dom} holds and $(\vx_1^{(0)},\vy_1^{(0)}, \vy_2^{(0)})$ in Algorithm~\ref{alg:first-order-new} satisfies
\begin{align}
\label{eq:initial_condition}
p(\vx_1^{(0)}, \vy_1^{(0)})-d(\vx_1^{(0)} ,\vy_2^{(0)})\leq \epsilon.
\end{align}
   Algorithm~\ref{alg:first-order-new} outputs an $O(\epsilon)$-KKT point $(\vx_1^{\epsilon}, \vy_1^{\epsilon}, \vy_2^{\epsilon})$ of~\eqref{eq:bilevel} with oracle complexity $\tilde O(\epsilon^{-4})$.
    \end{theorem}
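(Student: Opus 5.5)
The argument has four parts: (i) the outer inexact proximal point loop terminates after $O(\epsilon^{-2})$ iterations; (ii) the returned point is an $O(\epsilon)$-primal-dual stationary point of \eqref{eq:new_minmax} in the sense of Definition~\ref{def:epsilon_stationary}; (iii) it satisfies the feasibility condition \eqref{eq:feasibility_bound_condition}, so that Lemma~\ref{thm:connection} yields an $O(\epsilon)$-KKT point; (iv) counting the inner \texttt{OptFOM} iterations gives $\tilde O(\epsilon^{-4})$ in total.

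\textbf{Descent of the outer loop.} Write $\Phi_\rho(\vx_1,\vy_1,\vy_2):=\max_{\vx_2,\vz_1,\vz_2}P_\rho(\cdot)$ and let $\Phi_{\rho,\rho_2}$ be its analogue with the smoothing term $-\frac{\rho_2}{2}\|\cdot-(\vx_2^{(k)},\vz_1^{(k)},\vz_2^{(k)})\|^2$. Since $P_1$ is $L_{\nabla P_1}$-smooth, the choice $\rho_1=2L_{\nabla P_1}$ makes the subproblem \eqref{eq:minmax_prox} $L_{\nabla P_1}$-strongly convex in $(\vx_1,\vy_1,\vy_2)$, and it is $\rho_2$-strongly concave in the dual block.

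By optimality of the subproblem (up to the accuracy $\epsilon_k$), I will derive
\[
\Phi_\rho(\vx^{(k+1)})+\tfrac{\rho_1}{2}\|\vx^{(k+1)}-\vx^{(k)}\|^2\le \Phi_\rho(\vx^{(k)})+\tfrac{\rho_2}{2}D_2^2+O(\epsilon_k).
\]
Here $\vx$ is shorthand for $(\vx_1,\vy_1,\vy_2)$, and the smoothing term costs at most $\rho_2D_2^2/2=\epsilon D_2/4$. Summing over $k$, and using $\sum_k\epsilon_k=\hat\epsilon\log K$, gives
\[
\Phi_\rho(\vx^{(K)})+\sum_k\tfrac{\rho_1}{2}\|\Delta_k\|^2\le\Phi_\rho(\vx^{(0)})+K\epsilon D_2/4+\tilde O(\hat\epsilon).
\]
Two observations control this. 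First, $\Phi_\rho\ge f_{\text{low}}$. Second, by \eqref{eq:initial_condition}, $\Phi_\rho(\vx^{(0)})\le \max f+\rho\epsilon=O(1)$.

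While the stopping test fails we have $\|\Delta_k\|>\epsilon/(4L_{\nabla P_1})$, and $\rho_1\|\Delta_k\|^2\gtrsim \epsilon^2/L_{\nabla P_1}=\Theta(\epsilon^3)$ because $L_{\nabla P_1}=\Theta(\epsilon^{-1})$. A naive count therefore gives $K=O(\epsilon^{-3})$, with the linear drift $K\epsilon D_2$ needing care. I expect this to be the main obstacle. To handle it I will telescope the smoothed function $\Phi_{\rho,\rho_2}$ with moving centers, i.e.\ use the potential of \citep[Algorithm 2]{lu2023first} which also tracks the dual proximal term. That analysis gives $O(\epsilon^{-1}\cdot L_{\nabla P_1}\cdot\epsilon^{-2}\cdot\epsilon^{2})$-type bounds, and I expect the number of outer iterations to come out as $O(\epsilon^{-1})$–$O(\epsilon^{-2})$. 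The same inequality, stopped at any iterate, also gives $\Phi_\rho(\vx^{(k)})=O(1)$ for all $k$ before termination. Dividing by $\rho=1/\epsilon$ then yields \eqref{eq:feasibility_bound_condition}.

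\textbf{Stationarity at the returned point.} At termination the approximate subproblem solution satisfies the first-order conditions of \eqref{eq:minmax_prox} up to $\epsilon_k$. The primal block therefore has a subgradient of $P_\rho$ of norm at most $\rho_1\|\Delta_k\|+\epsilon_k\le \epsilon/2\cdot O(1)+\hat\epsilon=O(\epsilon)$, using the stopping test and $\rho_1\approx 2L_{\nabla P_1}$. The dual block has a residual of at most $\rho_2\|\cdot\|+\epsilon_k\le \rho_2D_2+\epsilon_k=\epsilon/2+\hat\epsilon$. Hence Definition~\ref{def:epsilon_stationary} holds with $O(\epsilon)$, and Lemma~\ref{thm:connection} applies.

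\textbf{Complexity.} \texttt{OptFOM} on an SCSC problem with condition numbers $L_{\nabla\h}/\sigma_x=O(1)$ and $L_{\nabla\h}/\sigma_y=O(\epsilon^{-1}/\epsilon)=O(\epsilon^{-2})$ needs $\tilde O(\sqrt{\kappa_x\kappa_y})=\tilde O(\epsilon^{-1})$ gradient calls per subproblem, with the dependence on $\epsilon_k$ only logarithmic. Multiplying by the outer iteration bound and choosing constants so that the product is $\tilde O(\epsilon^{-4})$ completes the proof. Of the steps above, making the outer count come out as $O(\epsilon^{-3})$ despite the smoothing drift is the delicate one. If it does not, I would instead set the target via the telescoped bound on $\sum\rho_1\|\Delta_k\|^2$ and, if needed, a larger $\rho_2$.
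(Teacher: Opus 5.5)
\textbf{The gap is in your outer-loop step.} As written it does not merely need care; it fails.

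In your telescoped inequality the smoothing costs $\rho_2D_2^2/2=\epsilon D_2/4$ per iteration. The decrease guaranteed while the stopping test fails is only $\frac{\rho_1}{2}\|\Delta_k\|^2>\epsilon^2/(16L_{\nabla P_1})=\Theta(\epsilon^3)$. The drift therefore beats the decrease by a factor $\Theta(\epsilon^{-2})$, and summing bounds neither the number of outer iterations nor $\Phi_\rho(\vx^{(k)})$.

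After the $\Theta(\epsilon^{-3})$ iterations you need, you can only certify $\Phi_\rho=O(\epsilon^{-2})$. That gives $\rho^{-1}(\Phi_\rho-f_{\text{low}})=O(\epsilon^{-1})$ rather than the $O(\epsilon)$ demanded by \eqref{eq:feasibility_bound_condition}, so Lemma~\ref{thm:connection} does not apply.

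Your proposed remedies do not repair this. The ``moving-center potential'' is never specified, and your guess of $O(\epsilon^{-1})$--$O(\epsilon^{-2})$ outer iterations contradicts the $O(\epsilon^{-3})$ your final count needs. Increasing $\rho_2$ breaks the dual residual bound $\rho_2D_2\le\epsilon/2$ that your stationarity step relies on.

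\textbf{The missing idea is that the dual smoothing is anchored at the fixed initial dual point.} The paper does not redo the analysis. It checks that under \eqref{eq:maph}--\eqref{eq:mapLh} Algorithm~\ref{alg:first-order-new} is Algorithm~\ref{alg:first-order}, and invokes Theorem~\ref{thm:lu_thm}. In that scheme the dual term is $-\epsilon\|y-y^0\|^2/(4D_q)$ with $y^0$ fixed (see \eqref{hk} and \eqref{eq:mapbarh}). Your drift comes from reading the center in \eqref{eq:minmax_prox} as the current iterate $(\vx_2^{(k)},\vz_1^{(k)},\vz_2^{(k)})$, which is how it is typeset there.

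With a fixed center, the outer loop is an inexact proximal point method on the single function $\phi^\epsilon(x)=\max_y[H(x,y)-\epsilon\|y-y^0\|^2/(4D_q)]$. This function lies between $\max_yH(x,y)-\epsilon D_q/4$ and $\max_yH(x,y)$, so the smoothing is paid once rather than per iteration. Telescoping then gives $K=O(L_{\nabla P_1}\epsilon^{-2}+\hat\epsilon^2L_{\nabla P_1}^2\epsilon^{-4})=O(\epsilon^{-3})$ and \eqref{eq:HxK_bound}. Combined with \eqref{eq:initial_condition}, this bounds $\max P_\rho$ at the output by $\max_{\vx_2}f(\vx_1^{(0)},\vx_2,\vy_1^{(0)},\vy_2^{(0)})+1+\epsilon D_2/4+2\hat\epsilon^2(L_{\nabla P_1}^{-1}+4D_2^2L_{\nabla P_1}\epsilon^{-2})=O(1)$. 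That bound is exactly what feeds Lemma~\ref{thm:connection}.

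\textbf{The inexactness must also be accounted quadratically.} An $\epsilon_k$-stationary point of the subproblem has primal gap of order $\epsilon_k^2L_{\nabla P_1}D_2^2\epsilon^{-2}$. It is $\sum_k\epsilon_k^2\le 2\hat\epsilon^2=2\epsilon^3$ that keeps the accumulated error $O(1)$, which is why $\hat\epsilon=\epsilon^{1.5}$. Treating the per-step error as $O(\epsilon_k)$ and summing to $\tilde O(\hat\epsilon)$ hides a factor of order $L_{\nabla P_1}/\sigma_y=O(\epsilon^{-2})$ and would not give $O(1)$.

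Your stationarity step and the $\tilde O(\epsilon^{-1})$ per-subproblem cost of \texttt{OptFOM} are correct and match the paper.
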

\begin{remark}\label{remark:initial_condition}
To satisfy \eqref{eq:initial_condition}, one can simply choose any $\vx_1^{(0)}\in\mathcal{X}_1$ and then apply any optimal first-order method, e.g., the one by \citep{doi:10.1137/S1052623403425629}, to the convex-concave minimax problem
$\min_{\vy_1}\max_{\vy_2}\tilde{f}(\vx_1^{(0)}, \vy_1, \vy_2)$ to find an $\epsilon$-saddle point $(\vy_1^{(0)} ,\vy_2^{(0)})$ that satisfies \eqref{eq:initial_condition} in oracle complexity $O(1/\epsilon)$. This initialization procedure does not increase the complexity of Algorithm~\ref{alg:first-order-new}.
\end{remark}

\section{Application on bilevel optimization with a constrained lower-level problem}
\label{sec:cons_minmax}
\vspace{-1ex}

In this section, we discuss the application of  Algorithm~\ref{alg:first-order-new} to~\eqref{eq:blo_cons}. We first present assumptions on~\eqref{eq:blo_cons}. 
\begin{assumption}
\label{assume:constrained}
Problem \eqref{eq:blo_cons} has the structures in \eqref{eq:f_structure_cons} and the following statements hold.
    \begin{enumerate}
    \item $\mathcal{X}_1:=\dom f_2$ and $\mathcal{Y}_1:=\dom\bar{f}_2$ are compact.
        \item $f_1(\vx_1, \vy_1)$ is $L_{\nabla f_1}$-smooth on $\mathcal{X}_1  \times \mathcal{Y}_1$. $\bar{f}_1(\vx_1, \vy_1)$ is convex in $\vy_1$ and $L_{\nabla \bar{f}_1}$-smooth on $\mathcal{X}_1  \times \mathcal{Y}_1$. $\bar{f}(\vx_1, \vy_1)$ is $L_{\bar{f}}$-Lipschitz on $\mathcal{X}_1  \times \mathcal{Y}_1$. $\bar{\vg}(\vx_1, \vy_1)$ is convex in $\vy_1$ in each component, $L_{\nabla \bar{\vg}}$-smooth and and $L_{\bar{\vg}}$-Lipschitz on on $\mathcal{X}_1  \times \mathcal{Y}_1$. 
        \item $f_2$ and $\bar{f}_{2}$ are proper closed convex and their proximal operators can be computed exactly. 
       \item (Slater's condition) There exists $G>0$ such that, for any  $\vx_1\in\mathcal{X}_1$, there exists $\hat{\vy}_1\in\mathcal{Y}_1$ such that   $\bar{g}_i(\vx_1, \hat{\vy}_1)\leq -G$ for $i=1,\dots,l$.
    \end{enumerate}
\end{assumption}

We define the following quantities, which are all finite according to Assumption~\ref{assume:constrained}.
\small
\begin{align}
D_{\mathcal{Y}_1} := &\max\{\|\vy_1 - \vy_1'\| \;|\; \vy_1, \vy_1' \in \mathcal{Y}_1 \},\quad B:= \frac{2L_{\bar{f}}D_{\mathcal{Y}_1}}{G} \label{eq:cons_y1}.
\end{align}
\normalsize

The value of $B$ above is used in \eqref{eq:cons_minmax}. Our goal is to find an $\epsilon$-KKT point of \eqref{eq:blo_cons} defined by \citep{lu2025solving,lu2024firstorder}.

\begin{definition}
\label{def:cons_kkt}
$(\vx_1, \vy_1)$ is an $\epsilon$-KKT point of \eqref{eq:blo_cons} if there exists $(\vz_1, \rho, \vlam, \bar{\vlam})\in\mathcal{Y}_1\times\mathbb{R}_+\times \mathbb{R}_+^{l}\times \mathbb{R}_+^{l}$ such that
\small
    \begin{align*}
        &\text{dist}\left(\mathbf{0},\partial_{(\vx_1,\vy_1)} \left[f(\vx_1, \vy_1) +\rho\big(\bar{f}(\vx_1, \vy_1)-\bar{f}(\vx_1, \vz_1) - \bar{\vlam}^T\bar{g}(\vx_1, \vz_1)\big)+\vlam^T\bar{g}(\vx_1, \vy_1)\right]\right)\leq\epsilon, \\  
&\text{dist}\left(\mathbf{0}, \rho\partial_{\vz_1}\left[\bar{f}(\vx_1, \vz_1) + \bar{\vlam}^T\bar{g}(\vx_1, \vz_1)\right]  \right)\leq\epsilon,\quad\|[\bar{g}(\vx_1, \vz_1)]_+\|\leq \epsilon,\quad\left|\bar{\vlam}^T\bar{g}(\vx_1, \vz_1)\right|\leq\epsilon, \\
&\bar{f}(\vx_1, \vy_1)-\bar{f}^*(\vx_1)\leq \epsilon,\quad \|[\bar{g}(\vx_1, \vy_1)]_+\|\leq \epsilon,\quad\left|\vlam^T\bar{g}(\vx_1, \vy_1)\right|\leq\epsilon,
    \end{align*}
\normalsize
where $\bar{f}^*(\vx_1):=\min_{\vy_1}  \left\{\bar{f}(\vx_1, \vy_1)|\bar{\vg}(\vx_1, \vy_1)\leq 0\right\}$ for any $\vx_1\in\mathcal{X}_1$.
\end{definition}


It is well known that, under Slater's condition for the lower-level problem,
\eqref{eq:blo_cons} can be reformulated as \eqref{eq:cons_minmax},
which is a special case of \eqref{eq:bilevel}. This reformulation holds because of the lemma below whose proof is standard and given in Appendix~\ref{sec:proof_lemma1}.
\begin{lemma}
\label{thm:con_lagrangian}
Suppose Assumption~\ref{assume:constrained} holds. 
For any fixed $\vx_1\in\mathcal{X}_1$, a 
$\vy_1\in\mathcal{Y}_1$ is an optimal solution to the lower-level problem of \eqref{eq:blo_cons} if and only if there exists $\vy_2\in [0,B]^l$ such that $(\vy_1,\vy_2)$ is a saddle-point of 
$
\min_{\vz_1}\max_{\vz_2}\tilde{f}(\vx_1, \vz_1, \vz_2),
$
where $\tilde{f}(\vx_1, \vz_1, \vz_2)$ is defined in \eqref{eq:tildef_cons}.
\end{lemma}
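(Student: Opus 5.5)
We prove the two directions separately, using Lagrangian duality for the convex lower-level problem $\min_{\vz_1}\{\bar f(\vx_1,\vz_1)\mid \bar\vg(\vx_1,\vz_1)\le 0\}$ with $\vx_1$ fixed. The pivotal step is a uniform bound $\|\vlam^*\|_1\le B/2$ on every optimal multiplier, obtained from Slater's condition. Two consequences follow. First, the maximization over $[0,B]^l$ in \eqref{eq:tildef_cons} is never active at a dual optimum. Second, the box is large enough that the penalty $B\sum_i[\bar g_i]_+$ is exact. Throughout, strong duality holds because $\bar f(\vx_1,\cdot)+\bar f_2$ and $\bar g_i(\vx_1,\cdot)$ are convex, $\mathcal{Y}_1$ is compact, and Slater's condition (Assumption~\ref{assume:constrained}(4)) holds. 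Hence the unrestricted dual $\max_{\vlam\ge 0}\min_{\vz_1}\{\bar f+\vlam^\top\bar\vg\}$ attains its maximum and equals $\bar f^*(\vx_1)$.

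\textbf{Multiplier bound.} Let $\vlam^*\ge0$ be any dual optimal multiplier and $\hat\vy_1$ the Slater point. Then
$\bar f^*(\vx_1)\le \bar f(\vx_1,\hat\vy_1)+\vlam^{*\top}\bar\vg(\vx_1,\hat\vy_1)\le \bar f(\vx_1,\hat\vy_1)-G\|\vlam^*\|_1$.
Since $\bar f^*(\vx_1)=\bar f(\vx_1,\vy_1^*)$ for some feasible $\vy_1^*\in\mathcal{Y}_1$, the Lipschitz property gives $G\|\vlam^*\|_1\le L_{\bar f}\|\hat\vy_1-\vy_1^*\|\le L_{\bar f}D_{\mathcal{Y}_1}=GB/2$. (If $\bar f$ includes $\bar f_2$, I take $L_{\bar f}$ to be the Lipschitz constant of the full $\bar f$ on $\mathcal{X}_1\times\mathcal{Y}_1$, as the assumption states.) Therefore $\vlam^*\in[0,B/2]^l\subset[0,B]^l$.

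\textbf{($\Rightarrow$)} Let $\vy_1$ be lower-level optimal and set $\vy_2=\vlam^*$. By strong duality and complementary slackness, $\vy_1$ minimizes $\bar f+\vy_2^\top\bar\vg$ over $\vz_1$, so $\vy_1\in\arg\min_{\vz_1}\tilde f(\vx_1,\vz_1,\vy_2)$. Because $\bar\vg(\vx_1,\vy_1)\le0$ and $\vy_2^\top\bar\vg(\vx_1,\vy_1)=0$, the linear function $\vz_2\mapsto \vz_2^\top\bar\vg(\vx_1,\vy_1)$ on $[0,B]^l$ attains its maximum value $0$ at $\vy_2$. Hence $(\vy_1,\vy_2)$ is a saddle point.

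\textbf{($\Leftarrow$)} This is the main obstacle, because with a box-restricted dual, a saddle point need not a priori give a feasible $\vy_1$. Let $(\vy_1,\vy_2)$ be a saddle point. Maximizing over the box gives $p(\vx_1,\vy_1)=\bar f(\vx_1,\vy_1)+B\sum_i[\bar g_i(\vx_1,\vy_1)]_+$, and at a saddle point $\vy_1$ minimizes this exact-penalty function. To conclude, I show that every minimizer of $\bar f+B\sum_i[\bar g_i]_+$ is feasible and hence optimal for the constrained problem.

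\emph{Step 1: $p\ge\bar f^*$ everywhere.} For any $\vz_1$, using $\vlam^*\in[0,B/2]^l$,
$p(\vx_1,\vz_1)\ge \bar f(\vx_1,\vz_1)+\vlam^{*\top}\bar\vg(\vx_1,\vz_1)\ge \bar f^*(\vx_1)$,
with equality at the constrained optimum $\vy_1^*$. So $\min p=\bar f^*$, and the saddle property gives $p(\vx_1,\vy_1)=\bar f^*(\vx_1)$.

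\emph{Step 2: $\vy_1$ is feasible.} Suppose instead that $\bar g_j(\vx_1,\vy_1)>0$ for some $j$. Then
$p(\vx_1,\vy_1)=\bar f(\vx_1,\vy_1)+B\sum_i[\bar g_i]_+\ge \bar f(\vx_1,\vy_1)+\vlam^{*\top}\bar\vg(\vx_1,\vy_1)+(B/2)\sum_i[\bar g_i]_+$,
because $B-\lambda_i^*\ge B/2$. This is strictly greater than $\bar f^*(\vx_1)$, contradicting Step 1.

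\emph{Step 3: $\vy_1$ is optimal.} Since $\vy_1$ is feasible, $p(\vx_1,\vy_1)=\bar f(\vx_1,\vy_1)=\bar f^*(\vx_1)$, so $\vy_1$ is optimal for the lower-level problem.

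This argument is exactly where the factor $2$ in $B=2L_{\bar f}D_{\mathcal{Y}_1}/G$ is used: it keeps a strict margin between $B$ and every optimal multiplier.
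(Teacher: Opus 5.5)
Your proof is correct. The multiplier bound and the ($\Rightarrow$) direction are essentially the paper's: it derives $\sum_i\lambda_i^*\le L_{\bar f}D_{\mathcal{Y}_1}/G<B$ from the Slater point and sets $\vy_2=\vlam^*$. Your ($\Leftarrow$) direction, however, takes a genuinely different route.

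In that direction the paper never forms the primal function $p$ and never invokes $\vlam^*$. It works with the saddle point's own $\vy_2$:
\begin{itemize}
\item Maximizing over the box forces $y_{2,j}=B$ whenever $\bar g_j(\vx_1,\vy_1)>0$, so $\vy_2^\top\bar\vg(\vx_1,\vy_1)\ge 0$.
\item Evaluating the minimization inequality at the Slater point gives $G\|\vy_2\|_1\le L_{\bar f}D_{\mathcal{Y}_1}$, contradicting $y_{2,j}=B=2L_{\bar f}D_{\mathcal{Y}_1}/G$.
\item Feasibility then gives $\vy_2^\top\bar\vg(\vx_1,\vy_1)=0$, and optimality follows by restricting the minimization inequality to feasible $\vz_1$.
\end{itemize}
That argument is elementary and self-contained: this direction uses only the Slater point and the Lipschitz bound, with no strong duality or dual attainment. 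As a by-product, it shows that the saddle point's own $\vy_2$ satisfies complementary slackness, i.e.\ is a genuine Lagrange multiplier.

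Your argument instead uses weak duality to see that $\vy_1$ minimizes $p(\vx_1,\cdot)=\bar f(\vx_1,\cdot)+B\sum_i[\bar g_i(\vx_1,\cdot)]_+$. It then uses strong duality with $\vlam^*\in[0,B/2]^l$ to show $\min p=\bar f^*(\vx_1)$ and that every minimizer is feasible. It leans on more duality machinery. In exchange, it makes transparent why the reformulation works: the box-truncated primal function is an exact penalty with minimum value $\bar f^*(\vx_1)$, and you never need to characterize $\vy_2$.

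In both proofs the factor $2$ in $B$ supplies the strict margin. Both also tacitly need $L_{\bar f}>0$. If $\bar f(\vx_1,\cdot)$ is constant and one takes $L_{\bar f}=0$, then $B=0$ and every $(\vy_1,\mathbf{0})$ with $\vy_1\in\mathcal{Y}_1$ is a saddle point, feasible or not. This is harmless, since a Lipschitz constant can always be enlarged.
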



According to Lemma~\ref{thm:con_lagrangian}, we can solve \eqref{eq:blo_cons} by solving \eqref{eq:cons_minmax} using Algorithm~\ref{alg:first-order-new}. 
Below we show that an $\epsilon$-KKT point of  \eqref{eq:cons_minmax} is also an $O(\epsilon)$-KKT point of  \eqref{eq:blo_cons}. The proof is given in Appendix~\ref{sec:proof_of_cons_minmax}.


\begin{lemma}\label{thm:e-kkt}
Suppose $\epsilon \leq \min\{\frac{\rho L_{\bar{f}}}{4},\frac{\rho G \sqrt{l}}{4}\}$.     An $\epsilon$-KKT point for \eqref{eq:cons_minmax} under Definition~\ref{def:epsilon_KKT} is an $O(\epsilon)$-KKT point of \eqref{eq:blo_cons} under Definition~\ref{def:cons_kkt}.
\end{lemma}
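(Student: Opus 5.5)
We unpack Definition~\ref{def:epsilon_KKT} for \eqref{eq:cons_minmax}, where there is no $\vx_2$ and $\tilde f$ is given by \eqref{eq:tildef_cons}. Then we read off the multipliers for Definition~\ref{def:cons_kkt}: we keep $\rho$ and $\vz_1$, and set $\bar{\vlam}:=\vy_2$ and $\vlam:=\rho\vz_2$. The points $\vz_2$ and $\vy_2$ lie in $[0,B]^l$ because $\tilde f_3$ is the indicator of that box.

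\textbf{Stationarity conditions.} The first condition of Definition~\ref{def:epsilon_KKT} involves $f(\vx_1,\vy_1)+\rho\big(\bar f(\vx_1,\vy_1)+\vz_2^\top\bar\vg(\vx_1,\vy_1)-\bar f(\vx_1,\vz_1)-\vy_2^\top\bar\vg(\vx_1,\vz_1)\big)$. In the $\vy_2$ block it also carries the term $-\rho\,\partial\mathbf 1_{[0,B]^l}(\vy_2)$, because $-\tilde f$ contributes $+\mathbf 1$. So the $(\vx_1,\vy_1)$-block gives the first line of Definition~\ref{def:cons_kkt} directly with $\vlam=\rho\vz_2$. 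The $\vz_1$-condition gives the second stationarity condition with $\bar{\vlam}=\vy_2$.

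Two pieces of information remain to be exploited. The $\vy_2$-block says $\mathrm{dist}\big(\mathbf 0,-\rho\bar\vg(\vx_1,\vz_1)+\rho N_{[0,B]^l}(\vy_2)\big)\le\epsilon$. The $\vz_2$-condition says $\mathrm{dist}\big(\mathbf 0,-\rho\bar\vg(\vx_1,\vy_1)+\rho N_{[0,B]^l}(\vz_2)\big)\le\epsilon$. Together with the gap condition $p(\vx_1,\vy_1)-d(\vx_1,\vy_2)\le\epsilon$, these must produce feasibility, complementarity and lower-level optimality.

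\textbf{Complementarity and feasibility.} Consider the $\vz_2$-condition componentwise, with $v_i\in N_{[0,B]}(z_{2,i})$ and $|\rho\bar g_i(\vx_1,\vy_1)-\rho v_i|\le\epsilon_i$, where $\|\epsilon_\cdot\|\le\epsilon$. If $0<z_{2,i}<B$, then $v_i=0$. If $z_{2,i}=0$, then $v_i\le0$. If $z_{2,i}=B$, then $v_i\ge0$. Hence $|\lambda_i\bar g_i|=\rho z_{2,i}|\bar g_i|$ is at most $B\epsilon_i$ except in the case $z_{2,i}=B$ with $\bar g_i>0$. That same case is the only obstruction to $[\bar g_i]_+\le\epsilon_i/\rho$.

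So the key step, and the main obstacle, is ruling out $z_{2,i}=B$ with $\bar g_i(\vx_1,\vy_1)$ large. Here I would use Slater's condition, the choice $B=2L_{\bar f}D_{\mathcal Y_1}/G$, and the assumption $\epsilon\le\min\{\rho L_{\bar f}/4,\rho G\sqrt l/4\}$.

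My plan for this step is as follows. Let $\hat\vy_1$ be the Slater point and use the $\vz_2$-stationarity as an approximate variational inequality. Then compare $\tilde f(\vx_1,\vy_1,\vz_2)$ with $\tilde f(\vx_1,\hat\vy_1,\cdot)$, or rather use the gap bound: $p(\vx_1,\vy_1)=\bar f(\vx_1,\vy_1)+B\sum_i[\bar g_i(\vx_1,\vy_1)]_+$ and $d(\vx_1,\vy_2)\le\bar f^*(\vx_1)$. This yields
\[
\bar f(\vx_1,\vy_1)-\bar f^*(\vx_1)+B\|[\bar\vg(\vx_1,\vy_1)]_+\|_1\le\epsilon .
\]
Next, the exact penalty argument, using the Slater point, a convex combination of $\vy_1$ and $\hat\vy_1$, and Lipschitzness of $\bar f$, shows $\bar f(\vx_1,\vy_1)-\bar f^*(\vx_1)\ge -(B/2)\|[\bar\vg]_+\|_\infty$. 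Combining the two gives $\|[\bar\vg(\vx_1,\vy_1)]_+\|=O(\epsilon)$, and then $\bar f(\vx_1,\vy_1)-\bar f^*(\vx_1)\le\epsilon$.

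Once $[\bar g_i(\vx_1,\vy_1)]_+=O(\epsilon)$, the bad case contributes $|\lambda_i\bar g_i|\le\rho B\,O(\epsilon)$. This is only $O(\epsilon)$ if we can refine it, which is where the stated condition on $\epsilon$ versus $\rho$ should enter. Specifically, in the case $z_{2,i}=B$ we have $\rho\bar g_i\ge-\epsilon_i$. I would try to show, via the $\vz_2$-condition, that $\rho\bar g_i\le\epsilon_i$ as well, so that $|\lambda_i\bar g_i|\le B\epsilon$.

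\textbf{The $\vz_1$ side.} The analogous argument handles $\vz_1$ and $\bar{\vlam}=\vy_2$. The $\vy_2$-block gives $|\bar\lambda_i\bar g_i(\vx_1,\vz_1)|\le B\epsilon/\rho$ and $[\bar g_i(\vx_1,\vz_1)]_+\le\epsilon/\rho$, except in the boundary case $y_{2,i}=B$. That case is again excluded using Slater's condition and the $\vz_1$-near-stationarity: $\vz_1$ approximately minimizes $\bar f+\vy_2^\top\bar\vg$, and with $\vy_2=B$ in some coordinate, the Slater point and the bound on $\epsilon$ give a contradiction unless the violation is $O(\epsilon)$.
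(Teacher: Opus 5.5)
Your identification $\bar\vlam=\vy_2$, $\vlam=\rho\vz_2$ and your componentwise normal-cone analysis are exactly what the paper does. Your $\vz_1$-side sketch is the paper's argument that no $y_{2,i}$ equals $B$; that argument gives the contradiction outright, using also $-\rho\vy_2^\top\bar\vg(\vx_1,\vz_1)\le\epsilon\|\vy_2\|_1$. For feasibility and optimality of $\vy_1$ you take a different and valid route: gap condition plus an exact-penalty bound. Take $v=\|[\bar\vg(\vx_1,\vy_1)]_+\|_\infty$. The convex combination with $t=v/(v+G)$ is feasible, so $\bar f^*(\vx_1)-\bar f(\vx_1,\vy_1)\le L_{\bar f}D_{\mathcal Y_1}v/G=(B/2)v$. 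This gives $\|[\bar\vg(\vx_1,\vy_1)]_+\|_1\le 2\epsilon/B$ and $\bar f(\vx_1,\vy_1)-\bar f^*(\vx_1)\le\epsilon$ without involving $f_1$.

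The gap is the complementarity condition $|\vlam^\top\bar\vg(\vx_1,\vy_1)|=O(\epsilon)$ with $\vlam=\rho\vz_2$. An $O(\epsilon)$ violation only yields $\rho B\cdot O(\epsilon)$, which is $O(1)$ for $\rho=1/\epsilon$. Your proposed refinement via the $\vz_2$-condition cannot work. We have $N_{[0,B]}(B)=[0,\infty)$, and when $\bar g_i(\vx_1,\vy_1)>0$ the choice $z_{2,i}=B$ exactly maximizes $\vz_2\mapsto\vz_2^\top\bar\vg(\vx_1,\vy_1)$. So the $\vz_2$-condition holds with zero error in that component however large $\rho\bar g_i$ is. The gap condition does not scale with $1/\rho$ either.

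What is missing is to exclude $z_{2,i}=B$ outright, by running your Slater argument on the $\vy_1$-component of the first stationarity condition. That component says $\|\nabla_{\vy_1}f_1+\rho(\xi+\nabla_{\vy_1}\bar\vg\,\vz_2)\|\le\epsilon$ for some $\xi\in\partial_{\vy_1}\bar f(\vx_1,\vy_1)$. Pair this with $\vy_1-\hat\vy_1$ and use:
\begin{itemize}
\item convexity and Lipschitzness of $\bar f$;
\item convexity of $\bar g_i$ and $\bar g_i(\vx_1,\hat\vy_1)\le -G$;
\item $-\rho\vz_2^\top\bar\vg(\vx_1,\vy_1)\le\epsilon\|\vz_2\|_1$, which your case analysis does give.
\end{itemize}
The result is
\[
\|\vz_2\|_1(\rho G-\epsilon)\le\rho L_{\bar f}D_{\mathcal Y_1}+D_{\mathcal Y_1}(\epsilon+M),\qquad M:=\max_{\mathcal X_1\times\mathcal Y_1}\|\nabla_{\vy_1}f_1\|.
\]
If some $z_{2,i}=B$, then $\|\vz_2\|_1\ge B=2L_{\bar f}D_{\mathcal Y_1}/G$. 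Use $\epsilon\le\min\{\rho L_{\bar f}/4,\rho G/4\}$, which is the version the paper's proof actually uses rather than $\rho G\sqrt l/4$. Then the left side is at least $\tfrac32\rho L_{\bar f}D_{\mathcal Y_1}$ and the right side at most $\tfrac54\rho L_{\bar f}D_{\mathcal Y_1}+MD_{\mathcal Y_1}$. This is a contradiction once $\rho>4M/L_{\bar f}$. In that case every $z_{2,i}<B$, and your case analysis gives $|\vlam^\top\bar\vg(\vx_1,\vy_1)|\le\sqrt l B\epsilon$.

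The term $M$ is exactly what the paper leaves implicit when it asserts this step ``by the same argument.'' In the remaining regime $\rho\le 4M/L_{\bar f}$, your own gap bound closes the case:
\[
|\vlam^\top\bar\vg(\vx_1,\vy_1)|\le\sqrt lB\epsilon+\rho B\|[\bar\vg(\vx_1,\vy_1)]_+\|_1\le\sqrt lB\epsilon+2\rho\epsilon=O(\epsilon).
\]
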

With this lemma, we can characterize the complexity for Algorithm~\ref{alg:first-order-new} to find an $O(\epsilon)$-KKT point of \eqref{eq:blo_cons} in the following theorem whose proof is given in Appendix~\ref{sec:proof_thmcons}.

\begin{theorem}\label{thm:constrained}
Suppose $\epsilon \leq \min\{\frac{\rho L_{\bar{f}}}{4},\frac{\rho G }{4}\}$ and  Algorithm~\ref{alg:first-order-new} is appplied to \eqref{eq:cons_minmax}. 
Suppose Assumption~\ref{assume:constrained} holds and $(\vx_1^{(0)},\vy_1^{(0)}, \vy_2^{(0)})$ in Algorithm~\ref{alg:first-order-new}  satisfies
\small
\begin{align}
\label{eq:initial_condition_constrained}
\max_{\vz_2 \in [0, B]} \left\{\bar{f}(\vx_1^{(0)}, \vy_1^{(0)}) + \vz_2^\top\bar{\vg}(\vx_1^{(0)}, \vy_1^{(0)})\right\}
-\min_{\vz_1}\left\{\bar{f}(\vx_1^{(0)}, \vz_1) + (\vy_2^{(0)})^\top\bar{\vg}(\vx_1^{(0)}, \vz_1)\right\} 
\leq \epsilon
\end{align}
\normalsize
Algorithm~\ref{alg:first-order-new} outputs an $O(\epsilon)$-KKT point $(\vx_1^{\epsilon}, \vy_1^{\epsilon})$ of~\eqref{eq:blo_cons} in oracle complexity $\tilde O(\epsilon^{-4})$.
    \end{theorem}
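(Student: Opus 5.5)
The plan is to reduce Theorem~\ref{thm:constrained} to Theorem~\ref{thm:minmax} and Lemma~\ref{thm:e-kkt}. Problem \eqref{eq:cons_minmax} is the instance of \eqref{eq:bilevel} in which $\vx_2$ is absent. Its data are $f_3\equiv 0$, $\tilde{f}_1(\vx_1,\vy_1,\vy_2)=\vy_2^\top\bar{\vg}(\vx_1,\vy_1)+\bar{f}_1(\vx_1,\vy_1)$, $\tilde{f}_2=\bar{f}_2$ and $\tilde{f}_3=\mathbf{1}_{[0,B]^l}$, with $B$ taken from \eqref{eq:cons_y1}.

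\textbf{Step 1: verify Assumption~\ref{assump:dom}.} Compactness is immediate. $\mathcal{X}_1$ and $\mathcal{Y}_1$ are compact by Assumption~\ref{assume:constrained}, $\mathcal{Y}_2=[0,B]^l$ is a box, and $\mathcal{X}_2$ is trivial. $\tilde{f}_1$ is convex in $\vy_1$ because $\vy_2\ge 0$ and each $\bar g_i$ is convex in $\vy_1$, and it is linear, hence concave, in $\vy_2$. For smoothness, the gradient of $\vy_2^\top\bar{\vg}$ is $(\nabla_{\vx_1,\vy_1}\bar{\vg}^\top\vy_2,\ \bar{\vg})$. On the compact domain this is Lipschitz with constant roughly $\sqrt{l}BL_{\nabla\bar{\vg}}+2L_{\bar{\vg}}$, so $L_{\nabla\tilde f_1}=O(1)$; this uses both the boundedness $\|\vy_2\|\le\sqrt{l}B$ and the Lipschitz continuity of $\bar{\vg}$. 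The proximal maps of $\bar{f}_2$ and of the box indicator (a projection) are exact. Existence of a saddle point for each $\vx_1$ follows from Lemma~\ref{thm:con_lagrangian}, since the lower-level problem has a minimizer on the compact set $\mathcal{Y}_1$.

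\textbf{Step 2: match the initial condition.} By definition, $p(\vx_1,\vy_1)=\max_{\vz_2\in[0,B]^l}\{\bar f+\vz_2^\top\bar{\vg}\}$ and $d(\vx_1,\vy_2)=\min_{\vz_1}\{\bar f+\vy_2^\top\bar{\vg}\}$, where the $\bar{f}_2$ term sits inside $\bar{f}$. Hence \eqref{eq:initial_condition_constrained} is exactly \eqref{eq:initial_condition}. This condition can be met as in Remark~\ref{remark:initial_condition}: run an $O(1/\epsilon)$ saddle-point method on the Lagrangian over the box.

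\textbf{Step 3: combine the earlier results.} Theorem~\ref{thm:minmax} now applies, and Algorithm~\ref{alg:first-order-new} outputs an $O(\epsilon)$-KKT point of \eqref{eq:cons_minmax} in the sense of Definition~\ref{def:epsilon_KKT}, with $\tilde O(\epsilon^{-4})$ oracles. Lemma~\ref{thm:e-kkt} then converts this into an $O(\epsilon)$-KKT point of \eqref{eq:blo_cons}.

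The main obstacle is checking the hypothesis of Lemma~\ref{thm:e-kkt}. That lemma is stated with $\frac{\rho G\sqrt{l}}{4}$, whereas the theorem assumes $\frac{\rho G}{4}$. Since $\sqrt{l}\ge1$, the theorem's condition implies the lemma's, so this is not a real problem. The remaining point is that Lemma~\ref{thm:e-kkt} must be applied to an $O(\epsilon)$-KKT point, i.e.\ with tolerance $c\epsilon$. This is handled by rescaling $\epsilon$ by a constant, which leaves the big-$O$ conclusions and the $\tilde O(\epsilon^{-4})$ complexity unchanged, because $\rho=1/\epsilon$ makes the condition hold for all small $\epsilon$.

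Finally, I would check that the constants hidden in Theorem~\ref{thm:minmax} remain $O(1)$. They depend on $D_1$, $D_2$, $f_{\text{low}}$ and the smoothness constants, and $D_1$, $D_2$ and $L_{\nabla\tilde f_1}$ all involve $B=2L_{\bar f}D_{\mathcal{Y}_1}/G$, which is independent of $\epsilon$. So the complexity bound carries over.
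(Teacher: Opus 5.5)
Your proposal is correct and follows essentially the same route as the paper. Both verify that \eqref{eq:cons_minmax} fits Assumption~\ref{assump:dom} with an $\epsilon$-independent smoothness constant $L_{\nabla\bar f_1}+B\sqrt{l}L_{\nabla\bar{\vg}}+2L_{\bar{\vg}}$, so that $L_{\nabla P_1}=O(\rho)$, and then invoke Theorem~\ref{thm:minmax} together with Lemma~\ref{thm:e-kkt}; your additional checks (that the initial condition coincides with \eqref{eq:initial_condition}, the $\sqrt{l}$ mismatch in the lemma's hypothesis, and the constant rescaling of the tolerance) are points the paper leaves implicit, and you handle them correctly.
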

\begin{remark}
The methods in ~\citep{lu2025solving} and~\citep{lu2024firstorder} need complexity 
$\tilde{O}(\epsilon^{-7})$ and $\tilde{O}(\epsilon^{-6})$, respectively, to compute an $O(\epsilon)$-KKT point of~\eqref{eq:blo_cons}. This is because they apply two penalty parameters, $\rho=O(\epsilon^{-1})$ and $\mu=O(\epsilon^{-2})$, for the outer and inner problems, respectively, which leads to a smoothness parameter of $O(\rho\mu)=O(\epsilon^{-3})$ in their minimax penalty reformulation. On the contrary, we do not use an inner penalty parameter $\mu$  in \eqref{eq:cons_minmax}, so this reformulation is only $O(\rho)=O(\epsilon^{-1})$-smooth. 
\end{remark}

\section{Stochastic bilevel optimization with a minimax lower level problem}
\label{sec:stochastic_bilevel}
\vspace*{-1ex}

In this section, we still consider problem~\eqref{eq:bilevel} when gradients $f_1$ and $\tilde f_1$ can only be accessed through stochastic oracles. In particular, in addition to Assumption \ref{assump:dom}, we make the following assumptions.
\begin{assumption}
\label{assump:stoc}
There exist a random variable $\omega$, two constants $\delta_f$ and $\delta_{\tilde{f}}$, and stochastic gradient estimators  of $\nabla f_1(\vx_1, \vx_2, \vy_1, \vy_2;\omega)$ and $\nabla \tilde{f}_1(\vx_1,\vy_1, \vy_2;\omega)$, which are mappings of $(\vx_1, \vx_2, \vy_1, \vy_2)$ and $\omega$ denoted by $\hat{\nabla} f_1(\vx_1, \vx_2, \vy_1, \vy_2;\omega)$ and $\hat{\nabla} \tilde{f}_1(\vx_1,\vy_1, \vy_2;\omega)$, respectively, and satisfy
\small
 \begin{align*}
    & \mathbb{E}[\hat{\nabla}f_1(\vx_1, \vx_2, \vy_1, \vy_2; \omega)] = \nabla f_1(\vx_1, \vx_2, \vy_1, \vy_2), \quad
     \mathbb{E}[\hat{\nabla}\tilde{f}_1(\vx_1, \vy_1, \vy_2; \omega)] = \nabla \tilde{f}_1(\vx_1, \vy_1, \vy_2), \\
    & \mathbb{E}[\|\hat{\nabla}f_1(\vx_1, \vx_2, \vy_1, \vy_2; \omega) - \nabla f_1(\vx_1, \vx_2, \vy_1, \vy_2)\|^2] \leq \delta_f^2, \\
    & \mathbb{E}[\|\hat{\nabla}\tilde{f}_1(\vx_1, \vy_1, \vy_2; \omega)-\nabla \tilde{f}_1(\vx_1, \vy_1, \vy_2)\|^2] \leq \delta_{\tilde{f}}^2.
\end{align*}
\normalsize
\end{assumption}

A key design that allows Algorithm~\ref{alg:first-order-new} to find an $\epsilon$-KKT point of \eqref{eq:bilevel} in the deterministic case is the proximal steps (see lines 26 and 27) in Algorithm~\ref{mmax-alg1}, which require deterministic gradients and thus are not implementable in the stochastic setting. Therefore, we consider a slightly different notation of an $\epsilon$-KKT point in the stochastic case. 

\begin{definition}
\label{def:epsilon_KKT_nearly}
A solution $(\vx_1, \vy_1, \vy_2)$ is a nearly $\epsilon$-KKT point of \eqref{eq:bilevel} if there exists $(\hat{\vx}_1, \hat{\vy}_1, \hat{\vy}_2)$ such that
$\|(\vx_1, \vy_1, \vy_2)-(\hat{\vx}_1, \hat{\vy}_1, \hat{\vy}_2)\|\leq \epsilon$ and $(\hat{\vx}_1, \hat{\vy}_1, \hat{\vy}_2)$ is an $\epsilon$-KKT point \eqref{eq:bilevel}. 
\end{definition}

Under Assumptions~\ref{assump:dom} and \ref{assump:stoc}, \eqref{eq:new_minmax} is a stochastic NCC minimax problem, which can be solved by a stochastic inexact proximal point method such as the SAPD+ method by \citep{zhang2022sapd+}. SAPD+ updates solutions also in the way of \eqref{eq:minmax_prox} except that the subproblem \eqref{eq:minmax_prox} is also solved by the stochastic accelerated primal-dual (SAPD) method~\citep{zhang2024robust}. The SAPD+ method  applied to \eqref{eq:new_minmax} is described in Algorithm~\ref{alg:first-order-new} when the problem is stochastic (i.e., the condition of the if-statement in Line 6 is false) and SAPD is presented in Algorithm~\ref{alg:sapd} in Section~\ref{sec:SAPD} for a generic SCSC minimax problem. The complexity of Algorithm~\ref{alg:first-order-new} in the stochastic case is given below with proof provided in Appendix~\ref{sec:proof_thm2}.

\begin{theorem}\label{thm:minmax_stoch}
Suppose Assumptions~\ref{assump:dom} and \eqref{assump:stoc} hold, $(\vx_1^{(0)},\vy_1^{(0)}, \vy_2^{(0)})$ in Algorithm~\ref{alg:first-order-new} satisfies
\begin{align}
\label{eq:initial_condition_stoch}
\mathbb{E}[p(\vx_1^{(0)}, \vy_1^{(0)})-d(\vx_1^{(0)} ,\vy_2^{(0)})]\leq \epsilon,
\end{align}
and $K$ in Algorithm~\ref{alg:first-order-new}  satisfies 
$
K= \left\lceil \left(\mathbb{E}\left[\max_{\vx_2}f(\vx_1^{(0)}, \vx_2, \vy_1^{(0)}, \vy_2^{(0)})\right]+1- f_{\text{low}} + \epsilon D_2/4\right)\epsilon^{-2}\right\rceil,
$
Algorithm~\ref{alg:first-order-new} terminates and outputs a nearly $O(\epsilon)$-KKT point  of~\eqref{eq:bilevel} in expectation in stochastic oracle complexity $\tilde{O}(\epsilon^{-9})$.
\end{theorem}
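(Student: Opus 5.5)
The plan is to analyze the stochastic branch of Algorithm~\ref{alg:first-order-new} as an instance of SAPD+ \citep{zhang2022sapd+} applied to the NCC minimax problem \eqref{eq:new_minmax}. The outer loop runs $K$ inexact proximal point steps on the weakly convex primal function $\Phi_\rho(\vx_1,\vy_1,\vy_2):=\max_{\vx_2,\vz_1,\vz_2}P_\rho$ smoothed by $-\frac{\rho_2}{2}\|\cdot\|^2$ in the dual block. The argument splits into three parts: an outer descent argument, the cost of each SAPD call, and the conversion to a nearly $\epsilon$-KKT point.

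\textbf{Outer descent.} Define the smoothed function
\[
\Phi_{\rho,\rho_2}(\vx_1,\vy_1,\vy_2;\bar{\vw}):=\max_{\vx_2,\vz_1,\vz_2}\Big\{P_\rho-\tfrac{\rho_2}{2}\|(\vx_2,\vz_1,\vz_2)-\bar{\vw}\|^2\Big\}.
\]
Since $\rho_1\ge 2L_{\nabla P_1}$, the subproblem \eqref{eq:minmax_prox} is $L_{\nabla P_1}$-strongly convex in the primal block and $\rho_2=\epsilon/(2D_2)$-strongly concave in the dual block. SAPD with accuracy $\hat\epsilon=\epsilon^{1.5}$ then returns a point whose expected squared distance to the subproblem saddle point is at most $\hat\epsilon^2$.

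I will telescope the inexact proximal point inequality
\[
\mathbb{E}\,\Phi^{(k+1)}\le \mathbb{E}\,\Phi^{(k)}-\tfrac{\rho_1}{4}\,\mathbb{E}\|\Delta^{(k)}\|^2+O(\rho_1\hat\epsilon^2+\rho_2 D_2^2/K),
\]
where $\Delta^{(k)}$ denotes the primal proximal step. The shift of the dual center costs at most $\rho_2D_2^2/2=\epsilon D_2/4$ in total, which is where the $\epsilon D_2/4$ term in $K$ comes from.

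The initial value needs care: $\Phi^{(0)}\le \max_{\vx_2}f+\rho(p-d)\le \max_{\vx_2} f+1$, using \eqref{eq:initial_condition_stoch} and $\rho=1/\epsilon$. The lower bound is $f_{\text{low}}$, because $p-d\ge 0$. Choosing $K$ as stated then gives
\[
\frac1K\sum_k \rho_1^2\,\mathbb{E}\|\Delta^{(k)}\|^2=O(\rho_1\epsilon^2)=O(\epsilon),
\]
since $\rho_1=O(\epsilon^{-1})$. Hence the uniformly sampled $k'$ satisfies $\rho_1\mathbb{E}\|\Delta\|=O(\epsilon)$.

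\textbf{From small steps to a nearly KKT point.} Let $(\hat\vx_1,\hat\vy_1,\hat\vy_2)$ be the exact solution of the proximal subproblem, with matching dual $\hat{\vw}$. I will show three things.
\begin{itemize}
\item[(i)] The returned iterate is within $O(\epsilon)$ of this point. This follows from the SAPD accuracy $\hat\epsilon$ together with $\|\Delta\|=O(\epsilon/\rho_1)$.
\item[(ii)] The point $(\hat\vx_1,\hat\vy_1,\hat\vy_2,\hat{\vw})$ is an $O(\epsilon)$-primal-dual stationary point (Definition~\ref{def:epsilon_stationary}). The primal residual is $\rho_1\|\hat{\vx}-\vx^{(k)}\|=O(\epsilon)$, and the dual residual is $\rho_2\|\hat{\vw}-\vw^{(k)}\|\le \rho_2 D_2=\epsilon/2$. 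These are exact optimality conditions of the subproblem.
\item[(iii)] The feasibility condition \eqref{eq:feasibility_bound_condition} holds at $\hat{\vx}$. Here $\max P_\rho$ at $\hat{\vx}$ is bounded by $\Phi^{(k)}+\rho_2D_2^2$, which is at most $\Phi^{(0)}+O(1)=O(1)$ in expectation by the descent inequality. Dividing by $\rho$ gives $O(\epsilon)$.
\end{itemize}
Lemma~\ref{thm:connection} then shows $(\hat\vx_1,\hat\vy_1,\hat\vy_2)$ is an $O(\epsilon)$-KKT point, and (i) gives the nearly-KKT property in expectation.

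\textbf{Complexity.} By the SAPD rate for an SCSC problem with condition numbers $\kappa_x=O(1)$ and $\kappa_y=L_{\nabla\h}/\rho_2=O(\epsilon^{-2})$, reaching squared-distance accuracy $\hat\epsilon^2=\epsilon^3$ requires
\[
\tilde O\!\Big(\kappa_y+\frac{\delta_P^2}{\mu_y\mu_x\hat\epsilon^2}\Big)\ \text{samples}.
\]
Here $\delta_P^2=O(\epsilon^{-2})$, $\mu_x=L_{\nabla P_1}=O(\epsilon^{-1})$ and $\mu_y=O(\epsilon)$. This gives roughly $\epsilon^{-2}/(\epsilon^{-1}\cdot\epsilon\cdot\epsilon^{3})=\epsilon^{-5}$ for the primal-driven term, and up to $\epsilon^{-7}$ once the dual distance is measured in the $\mu_y$-weighted norm. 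Multiplying by $K=O(\epsilon^{-2})$ gives $\tilde O(\epsilon^{-9})$.

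\textbf{Main obstacle.} I expect the hardest step to be matching this SAPD bound exactly. The required accuracy is on the primal distance, but the SAPD guarantee is in a weighted gap or distance measure where the dual block is scaled by $\mu_y=\epsilon/(2D_2)$. Converting it will likely cost the extra $\epsilon^{-2}$ factor that produces $\epsilon^{-7}$ per call. I would first try to use the primal-weighted distance bound of \citep{zhang2022sapd+} directly.
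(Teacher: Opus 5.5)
Your architecture matches the paper's route through Theorem~\ref{thm:sapd+_thm}: an inexact proximal point method with SAPD on each subproblem, a Moreau-envelope descent, and Lemma~\ref{thm:connection} applied at the exact prox point. The iteration count, however, fails at ``hence $\rho_1\mathbb{E}\|\Delta\|=O(\epsilon)$.'' Your telescoping gives $\frac1K\sum_k\rho_1^2\mathbb{E}\|\Delta^{(k)}\|^2=O(\rho_1/K)$, which is $O(\epsilon)$ for $K=\Theta(\epsilon^{-2})$. Jensen then only yields $\rho_1\mathbb{E}\|\Delta^{(k')}\|=O(\sqrt{\epsilon})$, because a square root was dropped, so the primal residual in (ii) is $O(\sqrt\epsilon)$. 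Getting $O(\epsilon)$ requires $\rho_1/K=O(\epsilon^2)$, i.e.\ $K=\Theta(\epsilon^{-3})$. This is what the paper's proof actually uses: $K_P=\lceil(\cdots)\hat\epsilon^{-2}\rceil$ with $\hat\epsilon=\epsilon^{1.5}$. The $\epsilon^{-2}$ in the displayed $K$ of the statement is inconsistent with that proof.

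Your error model must also change. If SAPD only gives the unweighted bound $\mathbb{E}\|\cdot\|^2\le\hat\epsilon^2$, each outer step costs $\rho_1\hat\epsilon^2=\Theta(\epsilon^2)$ in the potential. That alone contributes $\rho_1^2\hat\epsilon^2=\Theta(\epsilon)$ to the averaged squared residual, and over $\epsilon^{-3}$ steps it accumulates to $\Theta(\epsilon^{-1})$, which breaks both the average and your bound (iii). The paper instead uses the weighted guarantee $\mathbb{E}[\sigma_x\|x^{k+1}-\hat x^k\|^2+\sigma_y\|y^{k+1}-\hat y^k\|^2]\le\hat\epsilon^2$ with $\sigma_x=L_{\nabla P_1}$. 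Then the per-step error is $\hat\epsilon^2$, the total is $K\hat\epsilon^2=O(1)$, and $L_{\nabla P_1}^2\mathbb{E}\|\hat x^{k'}-x^{k'}\|^2\le 2L_{\nabla P_1}\hat\epsilon^2=O(\epsilon^2)$.

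The weighted guarantee also removes your ``main obstacle,'' since it puts the large weight $\sigma_x=L_{\nabla P_1}$ on the primal block. It gives $\mathbb{E}\|x^{k+1}-\hat x^k\|\le\hat\epsilon/\sqrt{L_{\nabla P_1}}=O(\epsilon^2)$ directly. The dual block gets $\hat\epsilon/\sqrt{\sigma_y}=O(\epsilon)$, which the nearly-KKT definition does not even need, so no conversion to an unweighted distance is required. The per-call cost is then $\tilde O(L_{\nabla \bar h}/\sigma_y+\delta_P^2/(\sigma_y\hat\epsilon^2))=\tilde O(\epsilon^{-6})$, and the total is $O(\epsilon^{-3})\cdot\tilde O(\epsilon^{-6})=\tilde O(\epsilon^{-9})$. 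Your $\epsilon^{-2}\cdot\epsilon^{-7}$ reaches the same exponent only because the two errors cancel; the correct $K$ combined with your per-call bound would give $\epsilon^{-10}$.

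Finally, your claim that re-centering the dual costs $\rho_2D_2^2/2$ ``in total'' holds only if the smoothing center is fixed. That is how the paper's analysis is set up: the center is $y^0$ in \eqref{hk}, and $\epsilon D_q/4$ enters only through $\phi^{\epsilon}_{2L_{\nabla h}}(x^K)\ge H^*-\epsilon D_q/4$. If the center moves to $\vw^{(k)}$ each iteration, each re-centering can shift the potential by up to $\epsilon D_2/4$. Summed over $K$ steps, this swamps the $O(\epsilon^3)$ average per-step decrease you need.
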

   

%% file: 4exps.tex
\section{Numerical experiments}\label{sec:exp}
\vspace*{-1ex}

\subsection{Constrained linear optimization}
\vspace*{-1ex}

We consider the following constrained bilevel linear optimization problem~\citep{lu2024firstorder,lu2025solving}:
\begin{equation}
    \label{eq:clo}
    \min_{\vx,\vy}\; c^T\vx+d^T\vy+\mathbf{1}_{[-1,1]^n}(\vx)\quad\textbf{s.t.}\quad\vy\in\arg\min_\vz\big\{{\tilde d}^T\vz+\mathbf{1}_{[-1,1]^m}\vz\;|\;{\widetilde A}\vx+{\widetilde B}\vz-{\tilde b}\leq 0\big\},
\end{equation}
where $c\in\RR^n$, $d,\tilde d\in\RR^m$, $\tilde b\in\RR^l$, $\widetilde A\in\RR^{l\times n}$, and $\widetilde B\in\RR^{l\times m}$. Five instances are generated per tuple $(n,m,l)$ as described in Appendix~\ref{sec:clo_instances}, and we set the initial solution as $(\vx^{(0)},\vy^{(0)})=(0,0)$. 
\begin{figure}[t]
\centering
\begin{subfigure}{0.32\linewidth}
    \centering
    \includegraphics[width=\linewidth]{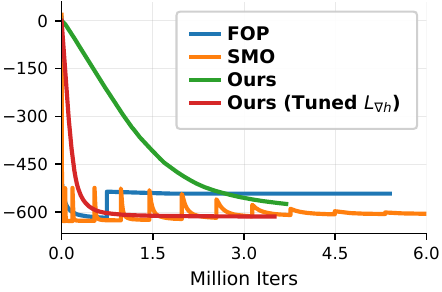}
    \caption{Objective value ($\downarrow$)}
\end{subfigure}
\begin{subfigure}{0.33\linewidth}
    \centering
    \includegraphics[width=\linewidth]{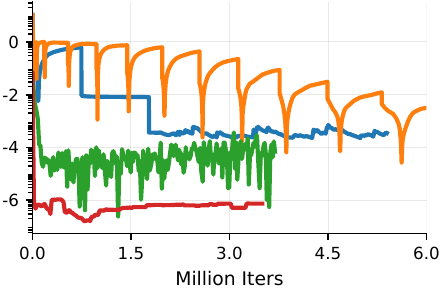}
    \caption{$\log_{10}$-Lower Optimality Gap ($\downarrow$)}
\end{subfigure}
\begin{subfigure}{0.32\linewidth}
    \centering
    \includegraphics[width=\linewidth]{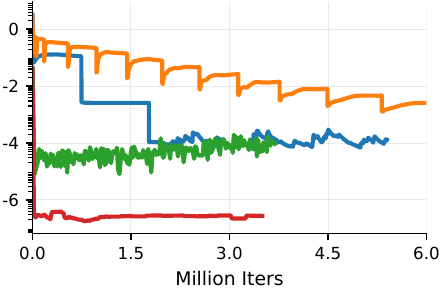}
    \caption{$\log_{10}$-Infeasibility ($\downarrow$)}
\end{subfigure}
\caption{Metrics for a linear instance where $(n,m,l)=(400,400,20)$.}
\vspace{-1ex}
\label{fig:clbo}
\end{figure}

We compare our method against 
FOP~\citep{lu2024firstorder} and SMO~\citep{lu2025solving}. 
We terminate the methods once $\epsilon_k\leq 0.01$, 
$\|\bar{\vg}(\vx_1,\vy_1)\|\le 0.01$ and $\bar{f} (\vx_1,\vy_1) - \bar{f}^*(\vx_1) \le 0.01$.
Overall, our method achieves competitive results against previous first order penalty methods when all methods use respective gradient Lipschitz upper bounds in determining step sizes, and our method performs significantly better when we tune $L_{\nabla P_1}$ to be a smaller value than the upper bound. In Figure~\ref{fig:clbo}, we plot for one instance the upper objective value and infeasibilities over number of gradient steps. In Table~\ref{tab:clo}, we give $f(\vx_1,\vy_1)$ at termination.

\begin{table}[htbp]
    \centering
    \begin{tabular}{ccc||cc|cc}
    \hline\hline
        $n$ & $m$  & $l$  & FOP & SMO & Ours &  Tuned $L_{\nabla h}$ (=6)\\
    \hline
        100 & 100 & 5 & -135.97 & -146.58 & -145.09 & \textbf{-147.77} \\
        200 & 200 & 10 & -285.73 & -308.98 & -302.58 & \textbf{-312.76} \\
        300 & 300 & 15 & -430.08 & -437.35 & -430.61 & \textbf{-455.54} \\
        400 & 400 & 20 & -573.76 & -620.38 & -588.08 &  \textbf{-627.70} \\
        500 & 500 & 25 & -712.83 & -765.39 & -708.40 & \textbf{-773.55}  \\
    \hline\hline
    \end{tabular}
    \vspace{1em}
    \caption{Final Upper Objective for Problem~\eqref{eq:clo}}
    \vspace{-1.5em}
    \label{tab:clo}
\end{table}

\subsection{Hyperparameter tuning for deep distributionally robust optimization (DRO)}
\vspace*{-1ex}

Empirical Risk Minimization (ERM) assumes identical training and testing distributions and assigns uniform weights to samples, resulting in poor worst-group performance on an unbalanced dataset~\citep{sagawa2020distributionally}. Instead of creating balanced data which are expensive~\citep{ren2018learning}, we propose a deep bilevel DRO framework by decomposing the model into a feature extractor $\vx_1$ and a linear classifier $\vy_1$, where $\texttt{Enc}(\va;\vx_1)$ produces representations on which $\vy_1$ predicts label $b$. This separation isolates non-convex representation learning from convex classification~\citep{kang2019decoupling, kirichenko2023last}. 

\paragraph{Lower Level: Group DRO}
Let $D_{\text{train}}=\cup_{g=1}^G D_{\text{train}}^{(g)}$. 
We define
\begin{equation}\label{eq:dro_lower}
\min_{\vy_1} \max_{\vy_2 \in \mathcal{Y}}
\;\;\mathcal{L}_{\text{train}}(\vx_1,\vy_1,\vy_2;\eta)
:= \vy_2^T \mathcal{L}^{\vx_1}(\vy_1)
\;-\; \frac{\eta}{2}\|\vy_2 - \tfrac{1}{G}\mathbf{1}\|^2
\;+\; \lambda \|\vy_1\|^2,
\end{equation}
where $\mathcal{L}^{\vx_1}(\vy_1) := (\mathcal{L}_1^{\vx_1},\dots,\mathcal{L}_G^{\vx_1})$, $\mathcal{L}_g^{\vx_1}(\vy_1) := \frac{1}{|D_{\text{train}}^{(g)}|} \sum_{(\va,b)\in D_{\text{train}}^{(g)}} \ell^{\vx_1}(\vy_1,(\va,b))$, and $\mathcal{Y}=\{\vy\ge 0:\mathbf{1}^T \vy=1\}$.
The learnable adversary $\vy_2$ emphasizes high-loss groups with $\eta$ controlling deviation from uniform weights. The lower problem is convex–concave in $(\vy_1,\vy_2)$ due to the linear head.

\paragraph{Upper Level: Hyperparameter Tuning}
We tune $\eta$ and $\vx_1$ using validation data $D_{\text{val}}$, partitioned analogously. The bilevel problem is
\begin{equation}\label{eq:dro_nested}
\min_{\vx_1,\eta} \max_{\vx_2 \in \mathcal{Y}_{\text{val}}}
\;\; \vx_2^T \mathcal{L}_{\text{val}}^{\vx_1}(\vy_1)
+ \lambda \|\vx_1\|^2
\quad
\textbf{s.t.}\quad
\vy_1,\vy_2 = \arg\min_{\vz_1}\max_{\vz_2\in\mathcal{Y}}
\mathcal{L}_{\text{train}}(\vx_1,\vz_1,\vz_2;\eta),
\end{equation}
where $\mathcal{L}_{\text{val}}^{\vx_1}(\vy_1) := (\mathcal{L}_{\text{val},1}^{\vx_1},\dots,\mathcal{L}_{\text{val},G_{\text{val}}}^{\vx_1})$.
Here $\vx_2$ computes worst-group validation loss. 

\paragraph{Results}
We use the stochastic version of our method with $\rho=100$ and minibatch size of 256 for CelebA and 128 for Waterbirds.
We evaluate our method on CelebFaces Attributes (CelebA)~\citep{liu2015faceattributes} and Waterbirds~\citep{sagawa2020distributionally} datasets, where binary labels are confounded by two spurious attributes, yielding 4 groups, where the minority group contributes to 3\% of the entire training set. Our method yields average performance close to DRO while outperforming it on worst group accuracy, with and without $\ell_2$ penalty (See Table~\ref{tab:dro_results}).
See Appendix~\ref{sec:dro_hyper_tuning_details} for additional experimental details.


\begin{table}[ht]
\centering

\begin{tabular}{c c cc cc}
\hline\hline
\textbf{Dataset} & $\ell_2$ \textbf{Penalty} & \multicolumn{2}{c}{\textbf{Average}} & \multicolumn{2}{c}{\textbf{Worst Group}} \\
 & $\lambda$  & DRO & Ours & DRO & Ours \\
\hline

\multirow{2}{*}{\textbf{CelebA}}
 & 0.0 & 94.7 & 93.9 & 41.1 & \textbf{75.0} \\
 & 0.1 & 93.5 & 95.6 & 86.7 & \textbf{90.0} \\
\hline

\multirow{2}{*}{\textbf{Waterbirds}}
 & 0.0 & 97.4 & 93.3 & 76.9 & \textbf{86.9} \\
 & 1.0 & 96.6 & 94.0 & 84.6 & \textbf{88.8} \\
\hline\hline
\end{tabular}
\vspace{1em}
\caption{Average and worst-group accuracy for DRO and our method on the test set.}
\vspace{-1.5em}
\label{tab:dro_results}
\end{table}

\section{Limitations} \label{sec:limit}
\vspace{-1ex}

This work has the following limitations: (1)  Assumption~\ref{assume:constrained} requires compactness of the feasible domains. (2) The $\epsilon$-KKT point in Definition~\ref{def:epsilon_KKT} is defined in a lifted space with additional variables $\vz_1$ and $\vz_3$. It would be more desirable to consider a stationarity notation that only involves the original variables. (3) In the stochastic case, the $\tilde{O}(\epsilon^{-9})$ complexity is high. We believe using the variance-reduction technique can reduce this complexity, which we leave it as a future work.

\section{Conclusion}\label{sec:conclusion}
\vspace{-1ex}

We studied bilevel optimization problems in which both the upper- and lower-level problems exhibit minimax structures. We proposed a penalty-based first-order method which finds an $\epsilon$-KKT point with $\tilde{O}(\epsilon^{-4})$ complexity in the deterministic case and finds a nearly $\epsilon$-KKT point with  $\tilde{O}(\epsilon^{-9})$ complexity in the stochastic case. When applied to deterministic bilevel problems with convex constrained lower-level minimization, the proposed framework improves the existing complexity bound from $\tilde{O}(\epsilon^{-7})$ in \citep{lu2024firstorder} and $\tilde{O}(\epsilon^{-6})$ in \citep{lu2025solving} to $\tilde{O}(\epsilon^{-4})$. 

%% file: apdx.tex
\appendix
\section{Additional applications of \eqref{eq:bilevel}}
\label{sec:applications}
Model~\eqref{eq:bilevel} can be potentially applied to not only many machine learning problems but also bilevel optimization with a convex lower-level problem with nonlinear convex inequality constraints. We list some of these applications here. 


\paragraph{Distributionally robust learning with tuning hyperparameters}
Let $\va$ be a feature vector and $b$ be a target variable. Suppose a model $s(\vy_1,\va)$ parameterized by $\vy_1$ is used to predict $b$ based on $\va$. Let $\ell(s(\vy_1,\va), b)$ be the loss of the prediction. Given a training dataset $D_{\text{train}}=\{(\va_t^i,b_t^i)\}_{i=1}^{n_{\text{train}}}$ and a validation dataset $D_{\text{val}}=\{(\va_v^i,b_v^i)\}_{i=1}^{n_{\text{val}}}$, let $\vy_2\in\mathbb{R}^{n_{\text{train}}}$ and $\vx_2\in\mathbb{R}^{n_{\text{val}}}$ be the weights on the training and validation instances, respectively. The weighted average losses on the training and validation sets are defined as 
\begin{align}
\label{eq:LDRO_train}
\mathcal{L}_{\text{DRO}} (D_{\text{train}}; \vy_1,\vy_2) := &\sum_{i=1}^{n_{\text{train}}} y_2^i \cdot \ell(s(\vy_1,\va_t^i), b_t^i)\\
\label{eq:LDRO_val}
\mathcal{L}_{\text{DRO}} (D_{\text{val}}; \vy_1,\vx_2) :=& \sum_{i=1}^{n_{\text{val}}} x_2^i \cdot \ell(s(\vy_1,\va_v^i), b_v^i).
\end{align}
The distributionally robust learning problem with tuning hyperparameters can be formulated as 
\small
\begin{align}
\label{eq:DRO_tuning}
    \min_{\eta\geq0,\vy_1}~& \max_{\vx_2 \in \Delta_{n_{\text{val}},p}} \mathcal{L}_{\text{DRO}} (D_{\text{val}}; \vy_1,\vx_2)\\\nonumber
    s.t.  ~&\vy_1\in\argmin_{\vy_2\in \Delta_{n_{\text{train}},1}} \left\{\mathcal{L}_{\text{DRO}} (D_{\text{train}}; \vy_1,\vy_2)-\frac{\eta}{2} \|\vy_2 -  \frac{1}{n_{\text{train}}}\mathbf{1}\|^2\right\}
\end{align}
\normalsize
where  $\Delta_{n,p}=\{\vy\in \mathbb{R}_+^n| \mathbf{1}^\top \vy=1, y_i\in[0,p], i=1,\dots,n\}$ is the truncated probability simplex. The objective function above is chosen to minimize the average loss on the $1/(pn)\times 100\%$ worst cases. However, if one directly minimizes the average loss on the $1/(pn)\times 100\%$ worst training instances, the model is not guaranteed to minimize the $1/(pn)\times 100\%$ worst instances in the populations due to the generalization erro, especially, when  $n_{\text{train}}$ is small. Therefore, a regularization term $\frac{x_1}{2} \|\vy_2 - 1/n_{\text{train}}\mathbf{1}\|^2$ with a changeable hyperparameter $x_1>0$ is introduced to the lower level problem. The goal is to minimize the average loss on the $1/(pn)\times 100\%$ worst cases on the validation set by the trained model through optimizing the parameter $x_1$. 

\paragraph{Distributionally robust meta-learning} The robust learning framework in \eqref{eq:DRO_tuning} can be extended to meta learning
~\citep{franceschi2018bilevel, finn2017model, rajeswaran2019meta,collins2020task}. Suppose there are $|\mathcal{T}|$ tasks, indexed by $\{1,\dots,\mathcal{T}\}$, and the training and testing datasets for task $\tau$ are denoted by $D_{\text{train}}^\tau=\{(\va_t^{\tau,i},b_t^{\tau,i})\}_{i=1}^{n_{\text{train}}}$ and $D_{\text{val}}^\tau=\{(\va_v^{\tau,i},b_v^{\tau,i})\}_{i=1}^{n_{\text{val}}}$, respectively, for $\tau\in\mathcal{T}$. Let $\vx_1$ be the meta-parameter and $\vy_1^\tau$ be the task-specifc parameter for task $\tau$. The distributionally meta-learning can be formulated as follows
\small
\begin{align}
\label{eq:DRO_meta_tuning}
    \min_{\vx_1,\eta\geq0,\lambda\in[\underline{\lambda},\bar\lambda],\vy_1^\tau,\tau\in\mathcal{T}}~&  \max_{\vx_2^\tau \in\Delta_{n_{\text{val}},p},\tau\in\mathcal{T}}\sum_{\tau\in\mathcal{T}}\mathcal{L}_{\text{DRO}} (D_{\text{val}}^\tau; \vy_1^\tau,\vx_2^\tau)\\\nonumber
    s.t.  ~&\vy_1^\tau\in\argmin_{\vy_2^\tau\in \Delta_{n_{\text{train}},1}} \left\{\mathcal{L}_{\text{DRO}} (D_{\text{train}}^\tau; \vy_1^\tau,\vy_2^\tau)-\frac{\eta}{2} \|\vy_2^\tau - \frac{1}{n_{\text{train}}}\mathbf{1}\|^2+\frac{\lambda}{2}\|\vy_1^\tau-\vx_1 \|\right\},\\\nonumber
    &\tau\in\mathcal{T}.
\end{align}
\normalsize

\section{Proof of Lemma~\ref{thm:connection}}
\label{sec:lemma_connection}
\begin{proof}[Proof of Lemma~\ref{thm:connection}]
Suppose $(\vx_1, \vy_1, \vy_2,\vx_2, \vz_1, \vz_2)$   is an $\epsilon$-primal-dual stationary point of \eqref{eq:new_minmax}. The first four conditions in Definition~\ref{def:epsilon_KKT} are trivially implied by the two conditions in Definition~\ref{def:epsilon_stationary}. 
By \eqref{eq:flow}, \eqref{eq:lagrange} and the fact that $p(\vx_1,\vy_1)-d(\vx_1,\vy_2)\geq0$, we have 
    \begin{align*}
   f_{\text{low}} \leq&~\max_{\vx_2'}f(\vx_1, \vx_2', \vy_1, \vy_2) + \rho\left[p(\vx_1,\vy_1)-d(\vx_1,\vy_2)\right]\\
    = &~\max_{\vx_2', \vz_1', \vz_2'}P_{\rho}(\vx_1, \vx_2', \vy_1, \vy_2, \vz_1', \vz_2').
\end{align*}
This  implies that 
\begin{align*}
 p(\vx_1,\vy_1)-d(\vx_1,\vy_2)\leq \rho^{-1}\left(\max_{\vx_2', \vz_1', \vz_2'}P_{\rho}(\vx_1, \vx_2', \vy_1, \vy_2, \vz_1', \vz_2')- f_{\text{low}}\right),
\end{align*}
which further implies $p(\vx_1,\vy_1)-d(\vx_1,\vy_2)\leq O(\epsilon)$ because of \eqref{eq:feasibility_bound_condition}. Hence, $(\vx_1, \vy_1, \vy_2)$ is an $O(\epsilon)$-KKT  point of~\eqref{eq:bilevel}.
\end{proof}

\section{A first-order method for nonconvex-concave minimax problem}
\label{sec:wcc_minmax}

Our goal is to solve \eqref{eq:bilevel} and \eqref{eq:bilevel_equality}  by solving its penalty formulation \eqref{eq:new_minmax}. Under Assumption~\ref{assump:dom}, \eqref{eq:new_minmax} is an instance of the following  nonconvex-concave minimax problem
\begin{align}
\label{eq:minmax}
 H^*:=\min_{x}\max_{y}\left\{ H(x,y):=h(x,y)+p(x)-q(y)\right\}
\end{align}
that satisfies the following assumptions.
\begin{assumption}\label{assump:h}~
    \begin{enumerate}
    \item $\dom p$ and $\dom q$ are compact.
        \item $h(x,y)$ is concave in $y$ and $L_{\nabla h}$-smooth on $\dom p \times \dom q$.
        \item $p$ and $q$ are proper closed convex, and their proximal operators can be computed exactly. 
    \end{enumerate}
\end{assumption}
We introduce the following notations
\begin{align}
\label{eq:DpDq}
&D_p := \max\{\|u-v\| |u,v \in \dom\,p\}, \quad D_q := \max\{\|u-v\| |u,v \in \dom\,q \},\\\label{eq:H_low}
&H_{\text{low}}:= \min \{H(x,y)|(x,y)\in \dom p \times \dom q\}. \quad
\end{align}
We say $(x^{\epsilon},y^{\epsilon})\in\text{dom}p\times\text{dom}q$ is an \emph{$\epsilon$-primal-dual stationary point} of \eqref{eq:minmax} if 
\begin{equation}
\label{eq:pd_stationary_H}
\text{dist}(0,\partial_xH(x^{\epsilon},y^{\epsilon}))\leq \epsilon~\text{ and }~\text{dist}(0,\partial_y(-H(x^{\epsilon},y^{\epsilon})))\leq \epsilon.
\end{equation}

To find an $\epsilon$-primal-dual stationary point of \eqref{eq:minmax}, one can use an inexact proximal-point method such as \citep[Algorithm 2]{lu2023first}, which generates the next iterate $\left(x^{k+1}, y^{k+1}\right)$ from the current iterate $\left(x^{k}, y^{k}\right)$ by approximately solving a sequence of strongly-convex-strongly-concave (SCSC) minimax subproblems as follows
\begin{align}
\label{eq:minmax_prox_h}
\left(x^{k+1}, y^{k+1}\right)\approx\arg\min_{x} \max_{y} \left\{H_k(x,y):=h_k(x,y)+p(x)-q(y)\right\},
\end{align}
where
\begin{align}
\label{hk}
h_k(x,y)=h(x,y)-\epsilon\|y-y^0\|^2/(4D_q)+L_{\nabla h} \|x-x^k\|^2.
\end{align}

The subproblem \eqref{eq:minmax_prox_h} is an instance of the following SCSC minimax problem
\begin{equation}\label{ea-prob}
\min_{x}\max_{y}\left\{ \h(x,y)+p(x)-q(y)\right\},
\end{equation}
which satisfies the following assumptions.
\begin{assumption}\label{assump:barh}~
    \begin{enumerate}
    \item $\dom p$ and $\dom q$ are compact.
        \item $\bar{h}(x,y)$ is $\sigma_x$-strongly convex in $x$ and $\sigma_y$-strongly concave in $y$ and $L_{\nabla \bar{h}}$-smooth on $\dom p \times \dom q$.
        \item $p$ and $q$ are proper closed convex and their proximal operators can be computed exactly. 
    \end{enumerate}
\end{assumption}
Subproblem \eqref{eq:minmax_prox_h} is an instance of \eqref{ea-prob} with
\begin{equation}
\label{eq:mapbarh}
\begin{aligned}
\h(x,y) \leftarrow&~h(x,y)-\epsilon\|y-y^0\|^2/(4D_q)+L_{\nabla h} \|x-x^k\|^2.
\end{aligned}
\end{equation}
Additionally, when \eqref{eq:minmax} satisfies Assumption~\ref{assump:h}, \eqref{eq:minmax_prox_h} is an instance of \eqref{ea-prob} satisfying Assumption~\eqref{assump:barh} with
\begin{equation}
\label{eq:mapbarL}
\begin{aligned}
    L_{\nabla\h} \leftarrow&~L_{\nabla h}+\max\{2L_{\nabla h},\epsilon/(2D_q)\},\\
    \sigma_x\leftarrow&~ L_{\nabla h},\\
    \sigma_y\leftarrow&~\epsilon/(2D_q).\\
\end{aligned}
\end{equation}
where $D_q$ is defined in \eqref{eq:DpDq}.

\begin{algorithm}[t]
\caption{An optimal first-order method for \eqref{ea-prob}: \texttt{OptFOM}$(\bar\epsilon,x^0,y^0,\bar{h},p,q,L_{\nabla \h},\sigma_x,\sigma_y)$}
\label{mmax-alg1}
\begin{algorithmic}[1]
\STATE {\bfseries Input:} targeted stationarity level $\bar\epsilon>0$, initial solution $(x^0,y^0)\in\dom\,p\times\dom\,q$, $(\bar{h},p,q)$ from \eqref{ea-prob}, and $(L_{\nabla \h},\sigma_x,\sigma_y)$ from Assumption~\ref{assump:barh}.
\STATE  $z^0_f=z^0=-\sigma_x x^0$, $y^0_f=y^0$,  $\bar \alpha=\min\left\{1,\sqrt{8\sigma_y/\sigma_x}\right\}$, $\eta_z=\sigma_x/2$, $\eta_y=\min\left\{1/(2\sigma_y),4/(\bar \alpha\sigma_x)\right\}$, $\zeta=\left(2\sqrt{5}(1+8L_{\nabla\h}/\sigma_x)\right)^{-1}$, $\gamma_x=\gamma_y=8\sigma_x^{-1}$, and $\hat\zeta=\min\{\sigma_x,\sigma_y\}/L_{\nabla \h}^2$.
\FOR{$k=0,1,2,\ldots$}
\STATE $(z^k_g,y^k_g)=\bar \alpha(z^k,y^k)+(1-\bar \alpha)(z^k_f,y^k_f)$.
\STATE $(x^{k,-1},y^{k,-1})=(-\sigma_x^{-1}z^k_g,y^k_g)$.
\STATE $x^{k,0}=\prox_{\zeta\gamma_xp}(x^{k,-1}-\zeta\gamma_x a^k_x(x^{k,-1},y^{k,-1}))$.
\STATE $y^{k,0}=\prox_{\zeta\gamma_y q}(y^{k,-1}-\zeta\gamma_y a^k_y(x^{k,-1},y^{k,-1}))$.
\STATE $b^{k,0}_x=\frac{1}{\zeta\gamma_x}(x^{k,-1}-\zeta\gamma_x a^k_x(x^{k,-1},y^{k,-1})-x^{k,0})$.
\STATE $b^{k,0}_y=\frac{1}{\zeta\gamma_y}(y^{k,-1}-\zeta\gamma_y a^k_y(x^{k,-1},y^{k,-1})-y^{k,0})$.
\STATE $t=0$.
\WHILE{ $\begin{array}{c}
\gamma_x\|a^k_x(x^{k,t},y^{k,t})+b^{k,t}_x\|^2+\gamma_y\|a^k_y(x^{k,t},y^{k,t})+b^{k,t}_y\|^2\\
>\gamma_x^{-1}\|x^{k,t}-x^{k,-1}\|^2+\gamma_y^{-1}\|y^{k,t}-y^{k,-1}\|^2
\end{array}$}
\STATE $\beta_t=2/(t+3)$
\STATE $x^{k,t+1/2}=x^{k,t}+\beta_t(x^{k,0}-x^{k,t})-\zeta\gamma_x(a^k_x(x^{k,t},y^{k,t})+b^{k,t}_x)$.
\STATE $y^{k,t+1/2}=y^{k,t}+\beta_t(y^{k,0}-y^{k,t})-\zeta\gamma_y(a^k_y(x^{k,t},y^{k,t})+b^{k,t}_y)$.
\STATE $x^{k,t+1}=\prox_{\zeta\gamma_x p}(x^{k,t}+\beta_t(x^{k,0}-x^{k,t})-\zeta\gamma_x a^k_x(x^{k,t+1/2},y^{k,t+1/2}))$.
\STATE $y^{k,t+1}=\prox_{\zeta\gamma_y q}(y^{k,t}+\beta_t(y^{k,0}-y^{k,t})-\zeta\gamma_y a^k_y(x^{k,t+1/2},y^{k,t+1/2}))$.
\STATE $b^{k,t+1}_x=\frac{1}{\zeta\gamma_x}(x^{k,t}+\beta_t(x^{k,0}-x^{k,t})-\zeta\gamma_x a^k_x(x^{k,t+1/2},y^{k,t+1/2})-x^{k,t+1})$.
\STATE $b^{k,t+1}_y=\frac{1}{\zeta\gamma_y}(y^{k,t}+\beta_t(y^{k,0}-y^{k,t})-\zeta\gamma_y a^k_y(x^{k,t+1/2},y^{k,t+1/2})-y^{k,t+1})$.
\STATE $t \leftarrow t+1$.
\ENDWHILE
\STATE $(x^{k+1}_f,y^{k+1}_f)=(x^{k,t},y^{k,t})$.
\STATE $(z^{k+1}_f,w^{k+1}_f)=(\nabla_x\hat h(x^{k+1}_f,y^{k+1}_f)+b^{k,t}_x,-\nabla_y\hat h(x^{k+1}_f,y^{k+1}_f)+b^{k,t}_y)$.
\STATE $z^{k+1}=z^k+\eta_z\sigma_x^{-1}(z^{k+1}_f-z^k)-\eta_z(x^{k+1}_f+\sigma_x^{-1}z^{k+1}_f)$.
\STATE $y^{k+1}=y^k+\eta_y\sigma_y(y^{k+1}_f-y^k)-\eta_y(w^{k+1}_f+\sigma_yy^{k+1}_f)$.
\STATE $x^{k+1}=-\sigma_x^{-1}z^{k+1}$.
\STATE $\tx^{k+1}=\prox_{\hat\zeta p}(x^{k+1}-\hat\zeta\nabla_x\h(x^{k+1},y^{k+1}))$.
\STATE $\ty^{k+1}=\prox_{\hat\zeta q}(y^{k+1}+\hat\zeta\nabla_y\h(x^{k+1},y^{k+1}))$.
\STATE Terminate the algorithm and output $(\tx^{k+1},\ty^{k+1})$ if
\begin{equation*}
\|\hat\zeta^{-1}(x^{k+1}-\tx^{k+1},\ty^{k+1}-y^{k+1})-(\nabla \h(x^{k+1},y^{k+1})-\nabla \h(\tx^{k+1},\ty^{k+1}))\|\leq\bar\epsilon.
\end{equation*}
\ENDFOR
\end{algorithmic}							
\end{algorithm}

The optimal algorithms for \eqref{ea-prob} under Assumption~\eqref{assump:barh} include \citet[Algorithm 4]{kovalev2022first} and \citet[Algorithm 1]{lu2023first}, which are also used in \cite{lu2024firstorder,lu2025solving} as a subroutine for solving bilevel optimization. We present \citet[Algorithm 1]{lu2023first} in Algorithm \ref{mmax-alg1}, where the notations $\hat h$, $a^k_x$ and $a^k_y$ are defined as follows:
\begin{align}
&\hat h(x,y)=\h(x,y)-\sigma_x\|x\|^2/2+\sigma_y\|y\|^2/2,\\
&a^k_x(x,y)=\nabla_x\hat h(x,y)+\sigma_x(x-\sigma_x^{-1}z^k_g)/2,\\
&a^k_y(x,y)=-\nabla_y\hat h(x,y)+\sigma_y y+\sigma_x(y-y^k_g)/8,
\end{align}
and $y^k_g$ and $z^k_g$ are generated at iteration $k$ of Algorithm \ref{mmax-alg1}. According to \citet[Theorem 1]{lu2023first}, under Assumption~\ref{assump:barh}, Algorithm \ref{mmax-alg1} finds an $\epsilon$-primal-dual stationary point of  \eqref{ea-prob} with $O(\ln(1/\epsilon))$ complexity.

Using Algorithm~\ref{mmax-alg1} to approximately solve subprpoblem \eqref{eq:minmax_prox_h} in each main iteration leads to \citep[Algorithm 2]{lu2023first}, which is presented in Algorithm~\ref{alg:first-order} for solving  \eqref{eq:minmax}. The complexity of Algorithm~\ref{alg:first-order} is established in \citet[Theorem 2]{lu2023first} and is re-stated in Theorem~\ref{thm:lu_thm}

\begin{algorithm}[t]
\caption{An inexact proximal point method \citep[Algorithm 2]{lu2023first} for~\eqref{eq:minmax}}
\label{alg:first-order}
\begin{algorithmic}[1]
\STATE {\bfseries Input:} Outer targeted stationarity level $\epsilon>0$, inner targeted stationarity level $\hat{\epsilon}>0$, initial solution $(x^0,y^0)\in\dom\,p\times\dom\,q$, $(h,p,q)$ from \eqref{eq:minmax},  $L_{\nabla h}$ from Assumption~\ref{assump:h}, and $D_q$ in \eqref{eq:DpDq}.
\FOR{$k=0,1,2,\ldots$}
\STATE Set $\bar{h}$ as in $\eqref{eq:mapbarh}$ and set $(L_{\nabla \h},\sigma_x,\sigma_y)$ as in \eqref{eq:mapbarL}.
\STATE 
$$
(x^{k+1}, y^{k+1})\leftarrow\texttt{OptFOM}\left(\frac{\hat{\epsilon}}{k+1},x^{k}, y^{k}, \bar{h}, p,q,L_{\nabla \h},\sigma_x,\sigma_y\right)
$$
\IF{
$\|x^{k+1}-x^k\|\leq\epsilon/(4L_{\nabla h})$
}
\STATE Return $(x^{\epsilon},y^{\epsilon})=(x^{k+1},y^{k+1})$.
\ENDIF
\ENDFOR
\end{algorithmic}
\end{algorithm}

\begin{theorem}\label{thm:lu_thm}Suppose Assumption~\ref{assump:h} holds. Let $H$, $H^*$, $D_p$, $D_q$ and $H_{\text{low}}$ be defined in \eqref{eq:minmax}, \eqref{eq:DpDq} and \eqref{eq:H_low}. Let 
    \begin{align*}
        \alpha = &~\min \{ 1, \sqrt{4\epsilon/(D_q L_{\nabla h})} \},\\
        \delta = &~(2+ \alpha^{-1})L_{\nabla h}D_p^2 + \max\{ \epsilon/D_q, \alpha L_{\nabla h}/4 \} D_q^2,\\
         K = &~\left\lceil 16(\max_{y}H(x^0,y)- H^* + \epsilon D_q/4)L_{\nabla h} \epsilon^{-2} + 32 \hat{\epsilon}^2(1+4D_q^2 L_{\nabla h}^2\epsilon^{-2}) \epsilon^{-2}-1)\right\rceil_+\\
        C =  &~\frac{4\max\{ \frac{1}{2L_{\nabla h} }, \min \{ \frac{D_q}{\epsilon}, \frac{4}{\alpha L_{\nabla h}} \} \}\Big(\delta + 2\alpha^{-1} (H^* - H_{\text{low}} + \epsilon D_q/4 + L_{\nabla h}D_p^2)\Big)}{[(3L_{\nabla h} + \epsilon/(2D_q))^2/\min \{ L_{\nabla h}, \epsilon/(2D_q)\} + 3 L_{\nabla h} + \epsilon/(2D_q)]^{-2}\hat{\epsilon}^2}\\
         N = &~(\left\lceil 96\sqrt{2}(1+(24L_{\nabla h} + 4\epsilon/D_q)L_{\nabla h}^{-1}) \right\rceil + 2)\max\{ 2, \sqrt{D_qL_{\nabla h}\epsilon^{-1}} \}\\ &~~~~~~~~~~\times \Big((K+1) \times (\log(C))_{+} + K + 1 + 2K\log(K+1) \Big).
    \end{align*}
    Then Algorithm~\ref{alg:first-order} outputs an $\epsilon$-primal-dual stationary point $(x^{\epsilon},y^{\epsilon})$ of ~\eqref{eq:minmax} (e.g., \eqref{eq:pd_stationary_H} holds) in complexity no more than $N$. 
    Moreover, $(x^{\epsilon},y^{\epsilon})$ satisfies
    \begin{align}
    \label{eq:HxK_bound}
    \max_{y}H(x^{\epsilon},y)\leq \max_{y} H(x^0, y)+\epsilon D_q/4+2\hat{\epsilon}^2(L_{\nabla h}^{-1}+4D_q^2 L_{\nabla h} \epsilon^{-2}).
    \end{align}
\end{theorem}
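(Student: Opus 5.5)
The plan is the standard inexact proximal point analysis, carried out on the regularized function $H_k$ of \eqref{eq:minmax_prox_h}. Write $\Phi(x):=\max_y H(x,y)$ and $\Phi_k(x):=\max_y H_k(x,y)$, and let $L:=L_{\nabla h}$.

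\textbf{Step 1 (subproblems are SCSC).} By Assumption~\ref{assump:h}, $h$ is $L$-smooth, so $h(\cdot,y)$ is $L$-weakly convex. Adding $L\|x-x^k\|^2$ makes $\bar h$ in \eqref{eq:mapbarh} $L$-strongly convex in $x$. The term $-\epsilon\|y-y^0\|^2/(4D_q)$ makes it $\epsilon/(2D_q)$-strongly concave in $y$. Its smoothness constant is as in \eqref{eq:mapbarL}. Hence Assumption~\ref{assump:barh} holds with condition numbers $O(1)$ in $x$ and $O(D_qL/\epsilon)$ in $y$. By \citet[Theorem 1]{lu2023first}, \texttt{OptFOM} returns an $\epsilon_k$-primal-dual stationary point of $H_k$, with $\epsilon_k=\hat\epsilon/(k+1)$. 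It does so within
\[
O\bigl(\max\{2,\sqrt{D_qL/\epsilon}\}\,\log(\text{initial gap}/\epsilon_k^2)\bigr)
\]
gradient evaluations. The initial primal-dual gap at $(x^k,y^k)$ is bounded uniformly in $k$ by compactness, using $D_p$, $D_q$ and $H^*-H_{\text{low}}$. This uniform bound is where $\delta$ and $C$ come from. Summing $\log(C(k+1)^2)$ over $k\le K$ gives the factor $(K+1)\log C+2K\log(K+1)+K+1$ in $N$.

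\textbf{Step 2 (sufficient decrease).} Let $(x^{k+1},y^{k+1})$ be $\epsilon_k$-stationary for $H_k$. Strong convexity of $\Phi_k$ (an $L$-strongly convex max of convex functions) gives $\Phi_k(x^{k+1})\le \Phi_k(x^\star_k)+O(\epsilon_k^2(L^{-1}+D_q^2L\epsilon^{-2}))$. The error is measured through both strong convexity constants, which is where $4D_q^2L\epsilon^{-2}$ enters. Combining this with $\Phi_k(x^\star_k)\le\Phi_k(x^k)$ and $|H_k-H-L\|x-x^k\|^2|\le\epsilon D_q/4$ in the $y$-part yields
\[
\Phi(x^{k+1})+L\|x^{k+1}-x^k\|^2\le \Phi(x^k)+\tfrac{\epsilon D_q}{4}\mathbb{1}_{k=0}+O\bigl(\epsilon_k^2(L^{-1}+D_q^2L\epsilon^{-2})\bigr).
\]
The $\epsilon D_q/4$ should be paid only once: I would compare against $\tilde\Phi(x):=\max_y\{H(x,y)-\epsilon\|y-y^0\|^2/(4D_q)\}$, which telescopes exactly, and convert to $\Phi$ only at the two ends. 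Telescoping and $\sum_k(k+1)^{-2}\le 2$ give both \eqref{eq:HxK_bound} and
\[
\sum_{k<K}L\|x^{k+1}-x^k\|^2\le \Phi(x^0)-H^*+\epsilon D_q/4+O(\hat\epsilon^2(\cdots)).
\]
Therefore the test $\|x^{k+1}-x^k\|\le\epsilon/(4L)$ must succeed within the stated $K$, since $L(\epsilon/4L)^2=\epsilon^2/(16L)$.

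\textbf{Step 3 (stationarity at termination).} We have $\partial_xH(x^{k+1},y^{k+1})=\partial_xH_k-2L(x^{k+1}-x^k)$, so $\mathrm{dist}(0,\partial_xH)\le\epsilon_k+\epsilon/2\le\epsilon$. Similarly $\partial_y(-H)=\partial_y(-H_k)-\epsilon(y^{k+1}-y^0)/(2D_q)$, with norm at most $\epsilon_k+\epsilon/2\le\epsilon$, provided $\hat\epsilon\le\epsilon/2$. The total count is $N$ from Step 1.

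\textbf{Main obstacle.} I expect the main obstacle to be Step 2's inexactness bookkeeping: turning an $\epsilon_k$-stationarity residual of the SCSC subproblem into a function-value error for $\Phi_k$ with the exact constant $2\epsilon_k^2(L^{-1}+4D_q^2L\epsilon^{-2})$. I would do this via the error bound $\|(x,y)-(x_k^\star,y_k^\star)\|\le\epsilon_k/\min\{L,\epsilon/(2D_q)\}$ together with smoothness.
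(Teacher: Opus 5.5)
The paper gives no proof of this statement; it imports it from \citet[Theorem 2]{lu2023first}. Your outline is essentially the argument behind that result:
\begin{itemize}
\item a uniform bound on each \texttt{OptFOM} call, giving the $\log(C(k+1)^2)$ factors;
\item descent of the fixed-center function $\tilde\Phi(x)=\max_y\{H(x,y)-\epsilon\|y-y^0\|^2/(4D_q)\}$, so that $\epsilon D_q/4$ is paid only at the two ends (your displayed per-step inequality with the indicator is not valid as written, but the $\tilde\Phi$ telescoping you describe right after it is the correct fix);
\item the counting argument giving at most $K+1$ outer iterations;
\item the two $\epsilon/2$ corrections at termination.
\end{itemize}
You are also right that $\hat\epsilon\le\epsilon/2$ is needed for the final stationarity claim. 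It has to be added as a hypothesis, because the restatement omits it. In the paper's application $\hat\epsilon=\epsilon^{1.5}$, so it holds once $\epsilon\le 1/4$.

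The one step that would not go through as you plan it is the inexactness constant in Step 2. You propose to bound $\Phi_k(x^{k+1})-\Phi_k(x_k^\star)$ via the distance to the saddle point $(x_k^\star,y_k^\star)$ plus smoothness. This does not work directly, because $\Phi_k$ contains the nonsmooth $p$. You therefore need a subgradient of $\Phi_k$ at $x^{k+1}$ anyway. Multiplying its norm by $\|x^{k+1}-x_k^\star\|\lesssim\epsilon_k/\min\{L,\epsilon/(2D_q)\}$ gives a bound of the right order. It does not give the constant $2\hat\epsilon^2(L^{-1}+4D_q^2L\epsilon^{-2})$ that appears in $K$ and in \eqref{eq:HxK_bound}.

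The sharp route uses the best response instead of the saddle point. Let $y_k^*(x)$ be the maximizer of $H_k(x,\cdot)$.
\begin{itemize}
\item By Danskin's theorem, $\nabla_x h_k(x^{k+1},y_k^*(x^{k+1}))+\partial p(x^{k+1})\subseteq\partial\Phi_k(x^{k+1})$.
\item Since $H_k(x^{k+1},\cdot)$ is $\epsilon/(2D_q)$-strongly concave, $\|y^{k+1}-y_k^*(x^{k+1})\|\le 2D_q\epsilon_k/\epsilon$.
\item Because the added quadratic terms are separable, only $\nabla_x h$ changes with $y$. Hence $\mathrm{dist}(0,\partial\Phi_k(x^{k+1}))\le\epsilon_k(1+2LD_q/\epsilon)$.
\end{itemize}
Then $L$-strong convexity of $\Phi_k$ yields
\[
\Phi_k(x^{k+1})-\min_x\Phi_k(x)\le\frac{\epsilon_k^2(1+2LD_q/\epsilon)^2}{2L}\le\epsilon_k^2\bigl(L^{-1}+4D_q^2L\epsilon^{-2}\bigr).
\]
Since $\sum_{k\ge0}(k+1)^{-2}=\pi^2/6<2$, this gives exactly the stated term. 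With this substitution, your argument recovers the theorem with its explicit constants.
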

\begin{remark}
According to this theorem, Algorithm~\ref{alg:first-order} finds  an $\epsilon$-primal-dual stationary point of ~\eqref{eq:minmax} in $K = O(L_{\nabla h}\epsilon^{-2}+\hat{\epsilon}^2L_{\nabla h}^2\epsilon^{-4})$ iterations with a total complexity of $N=\tilde{O}(\sqrt{L_{\nabla h}}\epsilon^{-0.5}K)$. 
If $\hat{\epsilon}$ is further set to be $O(\epsilon)$ and $L_{\nabla h}=O(1)$, the complexity becomes $N=\tilde{O}(\epsilon^{-2.5})$.
\end{remark}

\section{Proof of Theorem~\ref{thm:minmax}}\label{sec:proof_thm1}
\begin{proof}[Proof of Theorem~\ref{thm:minmax}]
We first verify that Algorithm~\ref{alg:first-order-new} is exactly Algorithm~\ref{alg:first-order} applied to \eqref{eq:new_minmax}.

By \eqref{eq:lagrange_split},  problem \eqref{eq:new_minmax} is in the form of \eqref{eq:minmax} with
\begin{equation}
\label{eq:maph}
\begin{aligned}
x \leftarrow&~(\vx_1,\vy_1,\vy_2), \\
y \leftarrow&~(\vx_2,\vz_1,\vz_2), \\
h(x,y) \leftarrow&~P_1(\vx_1,\vx_2, \vy_1,\vy_2, \vz_1, \vz_2),\\
p(x) \leftarrow&~P_2(\vx_1,\vy_1,\vy_2),\\
q(y) \leftarrow&~P_3(\vx_2,\vz_1, \vz_2).
\end{aligned}
\end{equation}
where $P_1$, $P_2$ and $P_3$ are defined in \eqref{eq:P1}, \eqref{eq:P2} and \eqref{eq:P3}. Moreover, according to Assumption~\ref{assump:dom}, 
the smoothness parameters of \eqref{eq:minmax} and the diameters of the domains, when specified to \eqref{eq:new_minmax}, are
\begin{equation}
\label{eq:mapLh}
\begin{aligned}
    L_{\nabla h} \leftarrow &~L_{\nabla P_1}=L_{\nabla f} + 2\rho L_{\nabla \tilde{f}},\\
    D_p\leftarrow &~D_1,\\
    D_q\leftarrow &~D_2,
\end{aligned}
\end{equation}
where $D_1$ and $D_2$ are defined in \eqref{eq:D1} and \eqref{eq:D2}. 
Given the mappings in \eqref{eq:maph} and \eqref{eq:mapLh}, the procedure in \eqref{eq:minmax_prox_h}, when applied to \eqref{eq:new_minmax}, becomes exactly \eqref{eq:minmax_prox} if we set  $\rho_1$ and $\rho_2$ as in Algorithm~\ref{alg:first-order-new}, i.e., 
\begin{align}
\label{eq:rho}
\rho_1=2L_{\nabla P_1}=2L_{\nabla f} + 4\rho L_{\nabla \tilde{f}},\quad \rho_2=\epsilon/(2D_2).
\end{align}

Similarly, subproblem \eqref{eq:minmax_prox} is an instance of \eqref{ea-prob} with
\begin{equation}
\label{eq:mapPrho}
\begin{aligned}
x\leftarrow&~(\vx_1,\vy_1,\vy_2), \\
y\leftarrow&~(\vx_2,\vz_1,\vz_2), \\
\h(x,y) \leftarrow&~P_1(\vx_1,\vx_2, \vy_1,\vy_2, \vz_1, \vz_2)\\
&~+\frac{\rho_1}{2}\left\|(\vx_1, \vy_1, \vy_2)-(\vx_1^{(k)}, \vy_1^{(k)}, \vy_2^{(k)})\right\|^2-\frac{\rho_2}{2}\left\|(\vx_2, \vz_1, \vz_2)-(\vx_2^{(k)}, \vz_1^{(k)}, \vz_2^{(k)})\right\|^2\\
p(x)\leftarrow&~P_2(\vx_1,\vy_1,\vy_2)\\
q(y)\leftarrow&~P_3(\vx_2,\vz_1, \vz_2),
\end{aligned}
\end{equation}
Additionally, when \eqref{eq:bilevel} satisfies Assumption~\ref{assump:dom} and $\rho_1$ and $\rho_2$ are set as in Algorithm~\ref{alg:first-order-new} (i.e., as in \eqref{eq:rho}),  \eqref{eq:minmax_prox} is an instance of \eqref{ea-prob} satisfying Assumption~\eqref{assump:barh} with
\begin{equation}
\label{eq:mapPrhoL}
\begin{aligned}
    L_{\nabla\h} \leftarrow&~L_{\nabla f} + 2\rho L_{\nabla \tilde{f}}+\max\{\rho_1,\rho_2\}, \\
    \sigma_x\leftarrow&~L_{\nabla f}+ 2\rho L_{\nabla \tilde{f}},\\
    \sigma_y\leftarrow&~\epsilon/(2D_2),
\end{aligned}
\end{equation}
where $D_2$ is defined in \eqref{eq:D2}. Since problem \eqref{eq:new_minmax} is also an instance of \eqref{eq:minmax} under the setting of \eqref{eq:maph} and \eqref{eq:mapLh}, Algorithm~\ref{alg:first-order-new} is exactly Algorithm~\ref{alg:first-order}
under the setting of \eqref{eq:maph} and \eqref{eq:mapLh}. Therefore, we can directly apply Theorem~\ref{thm:lu_thm} and Lemma~\ref{thm:connection} to prove Theorem~\ref{thm:minmax}.

Let $f^*$, $D_1$, $D_2$ and $f_{\text{low}}$ be defined in \eqref{eq:old_minmax}, \eqref{eq:D1}, \eqref{eq:D2} and \eqref{eq:flow}.  Let
    \begin{align*}
        L_{\nabla P_1} = &~L_{\nabla f_1}+2\rho L_{\nabla\tilde{f}_1}, \\
        \alpha_P =&~ \min \{ 1, \sqrt{4\epsilon/(D_2 L_{\nabla P_1})} \},\\
        \delta_P =&~ (2+ \alpha^{-1})L_{\nabla P_1}D_1^2+ \max\{ \epsilon/D_2, \alpha L_{\nabla P_1}/4 \}D_2^2,\\
         K_P = &~\left\lceil 
        \begin{array}{ll}
        16(\max_{\vx_2}f(\vx_1^{(0)}, \vx_2, \vy_1^{(0)}, \vy_2^{(0)})+1- f_{\text{low}} + \epsilon D_2/4)L_{\nabla P_1} \epsilon^{-2}\\
        + 32 \epsilon^3(1+4(D_2^2L_{\nabla P_1}^2\epsilon^{-2})\epsilon^{-2} -1)    \end{array}\right\rceil_+\\
       C_P = &~  \Big(\frac{4\max\{ \frac{1}{2L_{\nabla P_1} }, \min \{ \frac{D_2}{\epsilon}, \frac{4}{\alpha L_{\nabla P_1}} \} \}}{[(3L_{\nabla P_1} + \epsilon/(2D_2))^2/\min \{ L_{\nabla P_1}, \epsilon/(2D_2)\} + 3 L_{\nabla P_1} + \epsilon/(2D_2)]^{-2}\epsilon^3}\Big)\\
        &~~~~~~~~~~\times\Big(\delta + 2\alpha^{-1} (f^* - f_{\text{low}} + \epsilon D_2/4 + L_{\nabla P_1}D_1^2)\Big), \nonumber\\
       N_P =  &~ (\lceil 96\sqrt{2}(1+(24L_{\nabla P_1} + 4\epsilon/D_2)L_{\nabla P_1}^{-1}) \rceil + 2)\{ 2, \sqrt{D_2L_{\nabla P_1}\epsilon^{-1}} \}\\ &~~~~~~~~~~\times \Big((K+1) \times (\log(C))_{+} + K + 1 + 2K\log(K+1) \Big).
    \end{align*}
    

Note that 
$$
\max_{\vx_2, \vz_1, \vz_2} P_\rho(\vx_1^{(0)},\vx_2, \vy_1^{(0)},\vy_2^{(0)}, \vz_1, \vz_2)
\leq
\max_{\vx_2}f(\vx_1^{(0)}, \vx_2, \vy_1^{(0)}, \vy_2^{(0)})+1
$$ 
because of the initial condition \eqref{eq:initial_condition} and the fact that $\rho=\frac{1}{\epsilon}$.

Recall that $\hat{\epsilon}$ is set to be $\epsilon^{1.5}$ in Algorithm~\ref{alg:first-order-new}. 
Note that $\max_{y}H(x^0,y)$ in Theorem~\ref{thm:lu_thm} becomes
$$
\max_{y}H(x^0,y)=\max_{\vx_2, \vz_1, \vz_2} P_\rho(\vx_1^{(0)},\vx_2, \vy_1^{(0)},\vy_2^{(0)}, \vz_1, \vz_2).
\leq
\max_{\vx_2}f(\vx_1^{(0)}, \vx_2, \vy_1^{(0)}, \vy_2^{(0)})+1
$$
Applying Theorem~\ref{thm:lu_thm} under this setting with $\hat{\epsilon}=\epsilon^{1.5}$, 
we can  show that Algorithm~\ref{alg:first-order-new} outputs an $\epsilon$-primal-dual stationary point $\left((\vx_1^{\epsilon}, \vy_1^{\epsilon}, \vy_2^{\epsilon}),(\vx_2^{\epsilon}, \vz_1^{\epsilon}, \vz_2^{\epsilon})\right)$ of~\eqref{eq:new_minmax} in complexity no more than $N$. Since $L_{\nabla P_1}=O(\rho)=O(\epsilon^{-1})$, we have $K_P = O(\epsilon^{-3})$ and $N_P=\tilde O(\epsilon^{-4})$.

Moreover, \eqref{eq:HxK_bound} in Theorem~\ref{thm:lu_thm} also implies that $\left((\vx_1^{\epsilon}, \vy_1^{\epsilon}, \vy_2^{\epsilon}),(\vx_2^{\epsilon}, \vz_1^{\epsilon}, \vz_2^{\epsilon})\right)$  satisfies
\small
\begin{align*}
    \max_{\vx_2, \vz_1, \vz_2}P_\rho(\vx_1^{\epsilon},\vx_2, \vy_1^{\epsilon},\vy_2^{\epsilon}, \vz_1, \vz_2) \leq& \max_{\vx_2, \vz_1, \vz_2} P_\rho(\vx_1^{(0)},\vx_2, \vy_1^{(0)},\vy_2^{(0)}, \vz_1, \vz_2)\\
    &+\epsilon D_2/4+2\epsilon^3(L_{\nabla P_1}^{-1}+4D_2^2 L_{\nabla P_1} \epsilon^{-2}).
\end{align*}
\normalsize
On the other hand, by condition ~\eqref{eq:initial_condition}, it holds that
\small
\begin{align*}
\max_{\vx_2, \vz_1, \vz_2}P_\rho(\vx_1^{(0)},\vx_2, \vy_1^{(0)},\vy_2^{(0)}, \vz_1, \vz_2) 
=& \max_{\vx_2}f(\vx_1^{(0)}, \vx_2, \vy_1^{(0)}, \vy_2^{(0)}) + \rho\big(p(\vx_1^{(0)}, \vy_1^{(0)})-d(\vx_1^{(0)} ,\vy_2^{(0)})\big)\\
\leq &
\max_{\vx_2}f(\vx_1^{(0)}, \vx_2, \vy_1^{(0)}, \vy_2^{(0)})+1.
\end{align*}
\normalsize
Combining the two inequalities above gives
\small
\begin{align*}
    &\rho^{-1}\left(\max_{\vx_2, \vz_1, \vz_2}P_\rho(\vx_1^{\epsilon},\vx_2, \vy_1^{\epsilon},\vy_2^{\epsilon}, \vz_1, \vz_2) 
    -f_{\text{low}}\right)\\
    \leq&  \rho^{-1}\left( \max_{\vx_2, \vz_1, \vz_2} P_\rho(\vx_1^{(0)},\vx_2, \vy_1^{(0)},\vy_2^{(0)}, \vz_1, \vz_2)
    -f_{\text{low}}+\epsilon D_2/4+2\epsilon^3(L_{\nabla P_1}^{-1}+4D_2^2 L_{\nabla P_1} \epsilon^{-2})\right)\\
    \leq&  \rho^{-1}\left( \max_{\vx_2}f(\vx_1^{(0)}, \vx_2, \vy_1^{(0)}, \vy_2^{(0)})+1
    -f_{\text{low}}+\epsilon D_2/4+2\epsilon^3(L_{\nabla P_1}^{-1}+4D_2^2 L_{\nabla P_1} \epsilon^{-2})\right)\\
    \leq& O(\epsilon),
\end{align*}
\normalsize
where the last inequality is because $L_{\nabla P_1}=O(\rho)=O(\epsilon^{-1})$. By Lemma~\ref{thm:connection}, $(\vx_1^{\epsilon}, \vy_1^{\epsilon}, \vy_2^{\epsilon})$ is an $O(\epsilon)$-KKT point of \eqref{eq:bilevel}.
\end{proof}

\section{Proof of technical lemmas in Section~\ref{sec:cons_minmax}
}
\label{sec:tech_proof}
\subsection{Proof of Lemma~\ref{thm:con_lagrangian}}
\label{sec:proof_lemma1}
\begin{proof}
Let's consider the convex program 
\begin{align}
    \min_{\vz_1}\bar{f}(\vx_1, \vz_1) \qquad \text{s.t.} \quad \bar{\vg}(\vx_1, \vz_1) \leq 0. \label{eq:lowerlevel}
\end{align}
By Assumption~\ref{assume:constrained}.2, $\bar{f}(\vx_1, \vz_1)$ is convex in $\vz_1$, $\bar{g}_i(\vx_1, \vz_1)$ is convex in $\vz_1$. By Assumption~\ref{assume:constrained}.4, Slater holds with margin $G>0$. Then the strong duality holds and there exists a Lagrange multiplier $\vlam^* \in \mathbb{R}_+^l$ such that $(\vz_1^*, \vlam^*)$ is a saddle point of the Lagrangian $\bar{f}(\vx_1, \vz_1) + \vlam^T \bar{\vg}(\vx_1, \vz_1)$. That is
\begin{align} \label{eq:lower-tildef-saddle}
    \bar{f}(\vx_1, \vz_1^*) + \vlam^T \bar{\vg}(\vx_1, \vz_1^*) \leq \bar{f}(\vx_1, \vz_1^*) + \vlam^{*T} \bar{\vg}(\vx_1, \vz_1^*)\leq \bar{f}(\vx_1, \vz_1) + \vlam^{*T} \bar{\vg}(\vx_1, \vz_1)
\end{align}
for any $\vx_1\in\mathcal{X}_1$, $\vz_1\in\mathcal{Y}_1$ and $\vlam\in\mathbb{R}_+^l$. Due to complementary slackness,
we have 
$$\bar{f}(\vx_1, \vz_1^*)
= \bar{f}(\vx_1, \vz_1^*) + \vlam^{*T} \bar{\vg}(\vx_1, \vz_1^*)
$$

By Assumption~\ref{assume:constrained}.4, given any $\vx_1\in\mathcal{X}_1$, there exists  $\hat{\vz}_1 \in \mathcal{Y}_1$ satisfying $\bar{g}_i(\vx_1, \hat{\vz}_1) \leq -G$. Setting $\vz_1=\hat{\vz}_1$ in the second inequality of~\eqref{eq:lower-tildef-saddle}, we have
\begin{gather*}
    \bar{f}(\vx_1, \vz_1^*)
    \leq \bar{f}(\vx_1, \hat{\vz}_1) + \vlam^{*T} \bar{\vg}(\vx_1, \hat{\vz}_1)= \bar{f}(\vx_1, \hat{\vz}_1) + \sum_{i=1}^l \lambda_i^* \bar{g}_i(\vx_1, \hat{\vz}_1) \leq \bar{f}(\vx_1, \hat{\vz}_1) -G \sum_{i=1}^l \lambda_i^*\\
    \Rightarrow ~ \sum_{i=1}^l \lambda_i^* \leq \frac{\bar{f}(\vx_1, \hat{\vz}_1)-\bar{f}(\vx_1,
    \vz_1^*)}{G}
\end{gather*}
Now, using Assumption~\ref{assume:constrained}.2 and \eqref{eq:cons_y1}, we have $\bar{f}(\vx_1, \hat{\vz}_1) - \bar{f}(\vx_1,\vz_1^*) \leq L_{\bar{f}}D_{\mathcal{Y}_1}$.
Therefore,
\begin{align*}
    \sum_{i=1}^l \lambda_i^* \leq \frac{L_{\bar{f}}D_{\mathcal{Y}_1}}{G} < B.
\end{align*}
Since $\vlam^* \geq 0$, this implies the componentwise $\lambda^{*}_i < B, ~\forall i$. Hence,  $\vlam^* \in [0, B)^l$.

We now prove that 
a solution to \eqref{eq:lowerlevel} corresponds to a saddle point of $\tilde{f}(\vx_1,\vz_1,\vz_2)$  defined in~\eqref{eq:tildef_cons}.

``$\Rightarrow$'': Let $\vy_1$ be an optimal solution to the lower-level problem~\eqref{eq:lowerlevel}. As proven above, there exists an optimal dual multiplier $\vlam^{*} \ge 0$ such that $(\vy_1,\vlam^{*})$ forms a global saddle point for the standard Lagrangian~\eqref{eq:lower-tildef-saddle} 
and the optimal multiplier satisfies the bound $\vlam^{*} \in [0,B)^{l}$.

Let $\vy_{2} = \vlam^{*}$. We must prove that $(\vy_1,\vy_{2})$ is a saddle point of the specific minimax objective $\tilde{f}(\vx_1,\vz_1,\vz_2) = \overline{f}(\vx_1,\vz_1) + \vz_2^{T}\bar{\vg}(\vx_1,\vz_1) - \textbf{1}_{[0,B]^{l}}(\vz_2)$.
\begin{enumerate}
    \item \textbf{Maximization over the dual variable ($\vz_2$):} From the first inequality of~\eqref{eq:lower-tildef-saddle}, we have $\overline{f}(\vx_1,\vy_1) + \vz_2^{T}\bar{\vg}(\vx_1,\vy_1) \le \overline{f}(\vx_1,\vy_1) + (\vlam^{*})^{T}\bar{\vg}(\vx_1,\vy_1)$ for all $\vz_2 \ge 0$. This inequality trivially holds for any $\vz_2$ restricted to the subset $[0,B]^{l}$. For any $\vz_2 \notin [0,B]^{l}$, the indicator function $-\textbf{1}_{[0,B]^{l}}(\vz_2)$ evaluates to $-\infty$, making the inequality $\tilde{f}(\vx_1,\vy_1,\
    \vz_2) \le \tilde{f}(\vx_1,\vy_1,\vy_2)$ strictly and trivially true.
    \item \textbf{Minimization over the primal variable ($\vz_1$):} From the second inequality of~\eqref{eq:lower-tildef-saddle}, we have $\overline{f}(\vx_1,\vy_1) + (\vlam^{*})^{T}\bar{\vg}(\vx_1,\vy_1) \le \overline{f}(\vx_1,\vz_1) + (\vlam^{*})^{T}\bar{\vg}(\vx_1,\vz_1)$ for all $\vz_1 \in \mathcal{Y}_{1}$. Since $\vy_2 = \vlam^{*} \in [0,B]^{l}$, the indicator penalty is zero ($-\textbf{1}_{[0,B]^{l}}(\vy_2) = 0$). Adding this zero penalty to both sides yields exactly $\tilde{f}(\vx_1,\vy_1,\vy_2) \le \tilde{f}(\vx_1,\vz_1,\vy_2)$ for all $\vz_1 \in \mathcal{Y}_{1}$.
\end{enumerate}
Since both inequalities hold for all $\vz_1 \in \mathcal{Y}_{1}$ and all $\vz_2$, the pair $(\vy_1,\vy_2)$ is exactly a saddle point of $\min_{\vz_1} \max_{\vz_2} \tilde{f}(\vx_1,\vz_1,\vz_2)$.

``$\Leftarrow$'': Conversely, suppose there exists $\vy_2 \in [0,B]^{l}$ such that $(\vy_1,\vy_2)$ is a saddle point of $\min_{\vz_1} \max_{\vz_2} \tilde{f}(\vx_1,\vz_1,\vz_2)$, and equivalently a saddle point of $\min_{\vz_1} \max_{\vz_2 \in [0,B]^{l}} \{ \overline{f}(\vx_1,\vz_1) + \vz_2^{T}\bar{\vg}(\vx_1,\vz_1) \}$. 
The saddle point definition imposes two inequalities:
\begin{enumerate}
    \item \textbf{Maximization over the dual variable ($\vz_2$):} We have $\overline{f}(\vx_1,\vy_1) + \vz_2^{T}\bar{\vg}(\vx_1,\vy_1) \le \overline{f}(\vx_1,\vy_1) + \vy_2^{T}\bar{\vg}(\vx_1,\vy_1)$ for all $\vz_2 \in [0,B]^{l}$. For this to hold, $\vy_2$ must satisfy a component-wise maximization rule: if $\bar{\vg}_{i}(\vx_1,\vy_1) > 0$, it forces $y_{2,i} = B$; if $\bar{\vg}_{i}(\vx_1,\vy_1) < 0$, it forces $y_{2,i} = 0$.
    \item \textbf{Minimization over the primal variable ($\vz_1$):} We have $\overline{f}(\vx_1,\vy_1) + \vy_2^{T}\bar{\vg}(\vx_1,\vy_1) \le \overline{f}(\vx_1,\vz_1) + \vy_2^{T}\bar{\vg}(\vx_1,\vz_1)$ for all $\vz_1 \in \mathcal{Y}_{1}$.
\end{enumerate}

To prove primal feasibility, suppose for contradiction that $\bar{\vg}_{j}(\vx_1,\vy_1) > 0$ for some index $j$. By the maximization rule, this forces $y_{2,j} = B$, which implies $\vy_2^{T}\bar{\vg}(\vx_1,\vy_1) \ge 0$. Evaluating the minimization inequality at the Slater point $\hat{\vz}_1$ (where $\bar{\vg}_{i}(\vx_1,\hat{\vz}_1) \le -G$ for all $i$) yields:
$$ \overline{f}(\vx_1,\vy_1) + \vy_2^{T}\bar{\vg}(\vx_1,\vy_1) \le \overline{f}(\vx_1,\hat{\vz}_1) + \vy_2^{T}\bar{\vg}(\vx_1,\hat{\vz}_1) $$
Rearranging and utilizing the Lipschitz bound $\overline{f}(\vx_1,\hat{\vz}_1) - \overline{f}(\vx_1,\vy_1) \le L_{\overline{f}}D_{\mathcal{Y}_{1}}$, we obtain:
$$ -\vy_2^{T}\bar{\vg}(\vx_1,\hat{\vz}_1) \le \overline{f}(\vx_1,\hat{\vz}_1) - \overline{f}(\vx_1,\vy_1) - \vy_2^{T}\bar{\vg}(\vx_1,\vy_1) \le L_{\overline{f}}D_{\mathcal{Y}_{1}} $$
Since $y_{2,j} = B$ and $\vy_2 \ge 0$, we can strictly bound the left side: $-\vy_2^{T}\bar{\vg}(\vx_1,\hat{\vz}_1) \ge G \sum_{i=1}^{l} y_{2,i} \ge G \cdot B$. 
This gives $G \cdot B \le L_{\overline{f}}D_{\mathcal{Y}_{1}}$, or $B \le \frac{L_{\overline{f}}D_{\mathcal{Y}_{1}}}{G}$, which strictly contradicts our chosen bound $B =  \frac{2 L_{\overline{f}}D_{\mathcal{Y}_{1}}}{G}$.

Therefore, no constraint can be violated, guaranteeing {primal feasibility} ($\bar{\vg}(\vx_1,\vy_1) \le 0$). Because all constraints are feasible, the maximization rule forces $y_{2,i} = 0$ whenever $\bar{\vg}_{i}(\vx_1,\vy_1) < 0$, which establishes exact {complementary slackness} ($\vy_2^{T}\bar{\vg}(\vx_1,\vy_1) = 0$).

Substituting this complementary slackness equality into the minimization condition simplifies it to $\overline{f}(\vx_1,\vy_1) \le \overline{f}(\vx_1,\vz_1)$ for all feasible $\vz_1$ (where $\vy_2^{T}\bar{\vg}(\vx_1,\vz_1) \le 0$). Hence, $\vy_1$ is an optimal solution to the lower-level problem.
\end{proof}

\subsection{Proof of Lemma~\ref{thm:e-kkt}}
\label{sec:proof_of_cons_minmax}
\begin{proof}[Proof of Lemma~\ref{thm:e-kkt}]
Note that \eqref{eq:cons_minmax} is an instance of \eqref{eq:bilevel} without $\vx_2$. Suppose $(\vx_1, \vy_1, \vy_2)\in\mathcal{X}_1\times\mathcal{Y}_1\times [0,B]^l$ is an $\epsilon$-KKT solution of problem \eqref{eq:cons_minmax} under Definition~\ref{def:epsilon_KKT}. There must exist $\rho\geq0$ and $(\vz_1, \vz_2)\in \mathcal{Y}_1\times [0,B]^l$ such that the following inequalities hold.
    \begin{align}
        &~\text{dist}\left(\mathbf{0} ,\partial_{(\vx_1,\vy_1,\vy_2)}\left[   f(\vx_1, \vy_1) + \rho\left(
        \begin{array}{c}
     \bar{f}(\vx_1, \vy_1) + \vz_2^\top\bar{\vg}(\vx_1, \vy_1)-\mathbf{1}_{[0,B]^l}(\vz_2)\\
        - \bar{f}(\vx_1, \vz_1) - \vy_2^\top\bar{\vg}(\vx_1, \vz_1)+\mathbf{1}_{[0,B]^l}(\vy_2) 
        \end{array}
        \right)
        \right]\right)\leq \epsilon, \label{eq:dist_x1y1y2}\\
     &~\text{dist}\left(\mathbf{0}, \rho\partial_{\vz_1}\left[\bar{f}(\vx_1, \vz_1) + \vy_2^\top\bar{\vg}(\vx_1, \vz_1)\right]\right)\leq \epsilon,\label{eq:dist_z1}\\
        &~\text{dist}\left(\mathbf{0}, \rho\partial_{\vz_2}\left[-\vz_2^\top\bar{\vg}(\vx_1, \vy_1) + \mathbf{1}_{[0,B]^l}(\vz_2)\right]\right)\leq \epsilon, \label{eq:dist_z2}\\
        &~ \max_{\vz_2 \in [0,B]^l} \left\{\bar{f}(\vx_1, \vy_1) + \vz_2^T\bar{\vg}(\vx_1, \vy_1) \right\}- \min_{\vz_1}\left\{\bar{f}(\vx_1, \vz_1) + \vy_2^\top\bar{\vg}(\vx_1, \vz_1)\right\} \leq \epsilon.\label{eq:epsilon_gap}
    \end{align}

Let 
$$
\bar{\vlam}=\vy_2~~\text{and}~~\vlam=\rho\vz_2.
$$
Then \eqref{eq:dist_z1} can be written as 
\small
\begin{align}
\label{eq:stationary_constrain_lower}
\text{dist}\left(\mathbf{0}, \rho\partial_{\vz_1}\left[\bar{f}(\vx_1, \vz_1) + \bar{\vlam}^T\bar{g}(\vx_1, \vz_1)\right]  \right)\leq\epsilon.
\end{align}
\normalsize
It is  easy to see that \eqref{eq:dist_x1y1y2} implies 
\small
\begin{align}
\label{eq:stationary_constrain}
\text{dist}\left(\mathbf{0},\partial_{(\vx_1,\vy_1)} \left[f(\vx_1, \vy_1) +\rho\big(\bar{f}(\vx_1, \vy_1)-\bar{f}(\vx_1, \vz_1) - \bar{\vlam}^T\bar{g}(\vx_1, \vz_1)\big)+\vlam^T\bar{g}(\vx_1, \vy_1)\right]\right)\leq\epsilon
\end{align}
\normalsize
and
\begin{align}
\text{dist}\left(\mathbf{0}, \rho\partial_{\vy_2} \left[- \vy_2^\top\bar{\vg}(\vx_1, \vz_1)+\mathbf{1}_{[0,B]^l}(\vy_2) \right]\right)\leq \epsilon \label{eq:dist_y2}.
\end{align}
Equation \eqref{eq:dist_y2} is equivalent to 
\begin{align}
\text{dist}\left(\rho\bar{\vg}(\vx_1, \vz_1), \partial_{\vy_2} \mathbf{1}_{[0,B]^l}(\vy_2)\right)\leq \epsilon \label{eq:dist_y2_new}.
\end{align}
Let $\vy_2=(y_{2,1},\dots,y_{2,l})^\top$ and we can split the indexes $\{1,\dots,l\}$ into three subsets:
$$
I_1:=\{i|y_{2,i}=0\},\quad I_2:=\{i|y_{2,i}\in(0,B)\},\quad I_3:=\{i|y_{2,i}=B\}.
$$
Let $\vy_{2,I_1}$, $\vy_{2,I_2}$ and $\vy_{2,I_3}$ be the sub-vectors of $\vy_2$ where the indexes of the components are in $I_1$, $I_2$ and $I_3$, respectively. We define $\vg_{I_1}(\vx_1, \vz_1)$, $\vg_{I_2}(\vx_1, \vz_1)$ and $\vg_{I_3}(\vx_1, \vz_1)$ in a similar way. Because
\begin{align*}
\partial \mathbf{1}_{[0,B]}(y)
=
\begin{cases}
(-\infty,0] & \text{if } y=0,\\
\{0\} & \text{if } y\in(0,B),\\
[0,+\infty) & \text{if } y=B,
\end{cases}
\end{align*}
\eqref{eq:dist_y2_new} implies 
\begin{align}
\label{eq:gxz_norm_old}
\rho\sqrt{\|[\vg_{I_1}(\vx_1, \vz_1)]_+\|^2+\|\vg_{I_2}(\vx_1, \vz_1)\|^2+\|[\vg_{I_3}(\vx_1, \vz_1)]_-\|^2}\leq \epsilon,
\end{align}
where $[\cdot]_+:=\max\{\cdot,0\}$ and $[\cdot]_-:=\min\{\cdot,0\}$. Therefore, 
\begin{align}
\nonumber
-\rho \vy_2^\top\bar{\vg}(\vx_1, \vz_1)
&=~-\rho \vy_{2,I_1}^\top\bar{\vg}_{2,I_1}(\vx_1, \vz_1)
-\rho \vy_{2,I_2}^\top\bar{\vg}_{2,I_2}(\vx_1, \vz_1)
-\rho \vy_{2,I_3}^\top\bar{\vg}_{2,I_3}(\vx_1, \vz_1)\\\nonumber
&\leq~-\rho \vy_{2,I_1}^\top[\bar{\vg}_{2,I_1}(\vx_1, \vz_1)]_+
-\rho \vy_{2,I_2}^\top\bar{\vg}_{2,I_2}(\vx_1, \vz_1)
-\rho \vy_{2,I_3}^\top[\bar{\vg}_{2,I_3}(\vx_1, \vz_1)]_-\\\nonumber
&\leq~ \rho\|\vy_2\|\sqrt{\|[\vg_{I_1}(\vx_1, \vz_1)]_+\|^2+\|\vg_{I_2}(\vx_1, \vz_1)\|^2+\|[\vg_{I_3}(\vx_1, \vz_1)]_-\|^2}\\\label{eq:boundyg}
&\leq~ \epsilon\|\vy_2\|\leq \epsilon\|\vy_2\|_1.
\end{align}

We next prove that $I_3=\emptyset$. Let $\hat{\vy}_1$ be the solution that satisfies Assumption~\ref{def:cons_kkt}.4, so $\bar{g}_i(\vx_1, \hat{\vy}_1)\leq -G$ for $i=1,\dots,l$. By \eqref{eq:dist_z1}, we have 
$$
\rho (\partial_{\vz_1}\bar{f}(\vx_1, \vz_1) + \partial_{\vz_1}\bar{\vg}(\vx_1, \vz_1)\vy_2)^\top(\vz_1-\hat{\vy}_1) \leq D_{\mathcal{Y}_1}\epsilon,
$$
or equivalently, 
$$
 \rho \partial_{\vz_1} \bar{f}(\vx_1, \vz_1)^\top (\vz_1-\hat{\vy}_1) \leq \rho \left\langle\nabla_{\vz_1}\bar{\vg}(\vx_1, \vz_1)\vy_2,\hat{\vy}_1-\vz_1\right\rangle + D_{\mathcal{Y}_1}\epsilon,
$$
which implies 
    \begin{align*}
    -\rho L_{\bar{f}}D_{\mathcal{Y}_1}\leq &~ \rho\left(\bar{f}(\vx_1, \vz_1)-\bar{f}(\vx_1, \hat{\vy}_1)\right)\leq
    \rho \left\langle \partial_{\vz_1} \bar{f}(\vx_1, \vz_1),  \vz_1-\hat{\vy}_1 \right\rangle \\
    \leq &~\rho \left\langle\nabla_{\vz_1}\bar{\vg}(\vx_1, \vz_1)\vy_2,\hat{\vy}_1-\vz_1\right\rangle + D_{\mathcal{Y}_1}\epsilon\\
     \leq &~ \rho \vy_2^\top (\bar{\vg}(\vx_1, \hat{\vy}_1) - \bar{\vg}(\vx_1, \vz_1))+ D_{\mathcal{Y}_1}\epsilon\\
     \leq &~ -\rho G\|\vy_2\|_1+ \epsilon\|\vy_2\|_1+ D_{\mathcal{Y}_1}\epsilon,
    \end{align*} 
where the first inequality is because of the $L_{\bar{f}}$-Lipschitz continuity of $\bar{f}$, the second is by the convexity of $\bar{f}$ in its second argument, the fourth is by the convexity of $\bar{g}$ in its second argument, and the last inequality is because of \eqref{eq:boundyg} and the fact that $\bar{g}_i(\vx_1, \hat{\vy}_1)\leq -G$ for $i=1,\dots,l$. 

Suppose $I_3\neq\emptyset$ so there is one coordinate of $\vy_2$ equals $B$. We have $\|\vy_2\|_1\geq B$. Since $\epsilon\leq \rho G $, the inequality above implies 
$$
B(\rho G -\epsilon)\leq D_{\mathcal{Y}_1}\epsilon+ \rho L_{\bar{f}}D_{\mathcal{Y}_1}.
$$
Since $\epsilon \leq \min\{\frac{\rho L_{\bar{f}}}{4},\frac{\rho G }{4}\}$, by \eqref{eq:cons_y1}, it holds that 
$$
\frac{ 2L_{\bar{f}}D_{\mathcal{Y}_1}}{G }
= B \leq \frac{D_{\mathcal{Y}_1}\epsilon+ \rho L_{\bar{f}}D_{\mathcal{Y}_1}}{(\rho G -\epsilon)}\leq \frac{ 5 L_{\bar{f}}D_{\mathcal{Y}_1}}{3 G }.
$$
This contradiction means $I_3=\emptyset$.

Therefore, by \eqref{eq:gxz_norm_old}, we have 
\begin{align}
\nonumber
|\rho \vy_2^\top\bar{\vg}(\vx_1, \vz_1)|&=~|\rho \vy_{2,I_1}^\top\bar{\vg}_{2,I_1}(\vx_1, \vz_1)
+\rho \vy_{2,I_2}^\top\bar{\vg}_{2,I_2}(\vx_1, \vz_1)
+\rho \vy_{2,I_3}^\top\bar{\vg}_{2,I_3}(\vx_1, \vz_1)|\\\nonumber
&=~|\rho \vy_{2,I_2}^\top\bar{\vg}_{2,I_2}(\vx_1, \vz_1)|\\\nonumber
&\leq~ \rho\|\vy_2\|\|\vg_{I_2}(\vx_1, \vz_1)\|\\\nonumber
&\leq~ \epsilon \sqrt{l}B,
\end{align}
which implies 
\begin{align}
\label{eq:rhoygxz}
|\vy_2^\top\bar{\vg}(\vx_1, \vz_1)|\leq \frac{\epsilon \sqrt{l}B}{\rho}.
\end{align}
Moreover, \eqref{eq:gxz_norm_old} and the fact that $I_3=\emptyset$ imply that 
\begin{align}
\label{eq:gxz_norm}
|[\bar{\vg}(\vx_1,\vz_1)]_+\|\leq \sqrt{\|[\vg_{I_1}(\vx_1, \vz_1)]_+\|^2+\|\vg_{I_2}(\vx_1, \vz_1)\|^2}\leq \frac{\epsilon}{\rho}.
\end{align}

By the same argument as the one above, we can derive from \eqref{eq:dist_z2} that
\begin{align}
\label{eq:rhoygxy}
|\vz_2^\top\bar{\vg}(\vx_1, \vy_1)|\leq \frac{\epsilon \sqrt{l}B}{\rho}.
\end{align}
and
\begin{align}
\label{eq:gxy_norm}
|[\bar{\vg}(\vx_1,\vy_1)]_+\|\leq \frac{\epsilon}{\rho}.
\end{align}

Finally, let $\vz_1^*\in \argmin_{\vz_1'}  \left\{\bar{f}(\vx_1, \vz_1')|\bar{\vg}(\vx_1, \vz_1')\leq 0\right\}$.  We then have 
\begin{align}
\nonumber
&~\bar{f}(\vx_1, \vy_1) - \bar{f}^*(\vx_1)\\\nonumber
\leq&~ \left\{\bar{f}(\vx_1, \vy_1) + (\mathbf{0})^\top\bar{\vg}(\vx_1, \vy_1)\right\}
-\left\{\bar{f}(\vx_1, \vz_1^*) + \vy_2^\top\bar{\vg}(\vx_1, \vz_1^*)\right\}\\\nonumber
\leq&~ \max_{\vz_2' \in [0, B]} \left\{\bar{f}(\vx_1, \vy_1) + (\vz_2')^\top\bar{\vg}(\vx_1, \vy_1)\right\}
-\min_{\vz_1'}\left\{\bar{f}(\vx_1, \vz_1') + \vy_2^\top\bar{\vg}(\vx_1, \vz_1')\right\}\\\label{eq:ffstar}
\leq&~ \epsilon
\end{align}
where the last inequality is from \eqref{eq:epsilon_gap}.

The conclusion is thus proved as we have proved that \eqref{eq:stationary_constrain}, \eqref{eq:stationary_constrain_lower}, \eqref{eq:rhoygxz},  \eqref{eq:gxz_norm},   \eqref{eq:rhoygxy},  \eqref{eq:gxy_norm}, and \eqref{eq:ffstar} hold.
\end{proof}

\section{Proof of Theorem~\ref{thm:constrained}}\label{sec:proof_thmcons}


\begin{proof}
Since Theorem~\ref{thm:constrained} is an application of Theorem~\ref{thm:minmax} to a particular form of lower level objective, it remains to 
derive the Lipschitz constant $L_{\nabla P_1}$ for the penalty function. 
Based on the reformulation, we decompose $P_1$ as follows:
$$ P_1 = f_1(\mathbf{x}_1, \mathbf{y}_1) + \rho \underbrace{\big( \overline{f}_1(\mathbf{x}_1, \mathbf{y}_1) + \mathbf{z}_2^\top \bar{\vg}(\mathbf{x}_1, \mathbf{y}_1) \big)}_{H_1(\mathbf{x}_1, \mathbf{y}_1, \mathbf{z}_2)} - \rho \underbrace{\big( \overline{f}_1(\mathbf{x}_1, \mathbf{z}_1) + \mathbf{y}_2^\top \bar{\vg}(\mathbf{x}_1, \mathbf{z}_1) \big)}_{H_2(\mathbf{x}_1, \mathbf{z}_1, \mathbf{y}_2)} $$

Let $\mathbf{u} = (\mathbf{x}_1, \mathbf{y}_1)$. The gradient of $H_1(\mathbf{u}, \mathbf{z}_2)$ is $\nabla H_1 = [\nabla \overline{f}_1(\mathbf{u}) + J_{\bar{\vg}}(\mathbf{u})^\top \mathbf{z}_2, \, \bar{\vg}(\mathbf{u})]^\top$.
For any two points $(\mathbf{u}, \mathbf{z}_2)$ and $(\mathbf{u}', \mathbf{z}_2')$, we bound the difference in the $\mathbf{u}$-gradients:
\small
\begin{align*}
\|\nabla_{\mathbf{u}} H_1(\mathbf{u}, \mathbf{z}_2) - \nabla_{\mathbf{u}} H_1(\mathbf{u}', \mathbf{z}_2')\| 
&\le \|\nabla \overline{f}_1(\mathbf{u}) - \nabla \overline{f}_1(\mathbf{u}')\| + \|J_{\bar{\vg}}(\mathbf{u})^\top \mathbf{z}_2 - J_{\bar{\vg}}(\mathbf{u}')^\top \mathbf{z}_2'\| \\
&\le L_{\nabla \overline{f}_1} \|\mathbf{u} - \mathbf{u}'\| + \|(J_{\bar{\vg}}(\mathbf{u}) - J_{\bar{\vg}}(\mathbf{u}'))^\top \mathbf{z}_2\| + \|J_{\bar{\vg}}(\mathbf{u}')^\top (\mathbf{z}_2 - \mathbf{z}_2')\| \\
&\le (L_{\nabla \overline{f}_1} + B\sqrt{l} L_{\nabla \bar{\vg}}) \|\mathbf{u} - \mathbf{u}'\| + L_{\bar{\vg}} \|\mathbf{z}_2 - \mathbf{z}_2'\|
\end{align*}
\normalsize
where we utilized the constraint bounds $\|\mathbf{z}_2\| \le B\sqrt{l}$ (since $\mathbf{z}_2 \in [0,B]^l$) and the Lipschitz continuity of the constraints $\|J_{\bar{\vg}}\| \le L_{\bar{\vg}}$.

For the dual variable gradient, we have:
$$ \|\nabla_{\mathbf{z}_2} H_1(\mathbf{u}, \mathbf{z}_2) - \nabla_{\mathbf{z}_2} H_1(\mathbf{u}', \mathbf{z}_2')\| = \|\bar{\vg}(\mathbf{u}) - \bar{\vg}(\mathbf{u}')\| \le L_{\bar{\vg}} \|\mathbf{u} - \mathbf{u}'\| $$

Since 
\begin{gather*}
\| \nabla H_1(\mathbf{u}, \mathbf{z}_2) - \nabla H_1(\mathbf{u}', \mathbf{z}_2') \| \\
= \sqrt{\|\nabla_{\mathbf{u}} H_1 (\mathbf{u}, \mathbf{z}_2) - \nabla_{\mathbf{u}} H_1(\mathbf{u}', \mathbf{z}_2')\|^2 + \|\nabla_{\mathbf{z}_2} H_1 (\mathbf{u}, \mathbf{z}_2) - \nabla_{\mathbf{z}_2} H_1(\mathbf{u}', \mathbf{z}_2')\|^2} \\
\le \|\nabla_{\mathbf{u}} H_1 (\mathbf{u}, \mathbf{z}_2) - \nabla_{\mathbf{u}} H_1(\mathbf{u}', \mathbf{z}_2')\| + \|\nabla_{\mathbf{z}_2} H_1 (\mathbf{u}, \mathbf{z}_2) - \nabla_{\mathbf{z}_2} H_1(\mathbf{u}', \mathbf{z}_2')\|,
\end{gather*}
summing the above bounds provides the total Lipschitz constant for $H_1$:
$$ 
L_{\nabla H_1} \le L_{\nabla \overline{f}_1} + B\sqrt{l} L_{\nabla \bar{\vg}} + 2L_{\bar{\vg}} 
$$

By structural symmetry, because the dual variable $\mathbf{y}_2$ is constrained to the exact same box $[0,B]^l$, the function $H_2(\mathbf{x}_1, \mathbf{z}_1, \mathbf{y}_2)$ shares the identical Lipschitz bound. 

Combining $L_{\nabla H_1}$ and $L_{\nabla H_2}$ with the $L_{\nabla f_1}$-smoothness of the upper-level objective $f_1$ yields the final overall bound:
$$ L_{\nabla P_1} \le L_{\nabla f_1} + 2\rho(L_{\nabla \overline{f}_1} + B\sqrt{l}L_{\nabla \bar{\vg}} + 2L_{\bar{\vg}}). $$
We define the following quantities, which are all finite according to Assumption~\ref{assume:constrained}.
\begin{align}
 \label{eq:cons_D1}
   D_1 := & \max\{\|(\vx_1, \vy_1, \vy_2) - (\vx_1', \vy_1', \vy_2')\| \;|\; (\vx_1, \vy_1, \vy_2), (\vx_1', \vy_1', \vy_2') \in \mathcal{X}_1  \times \mathcal{Y}_1 \times [0,B]^l\},\\   \label{eq:cons_D2}
   D_2 := &\max\{\|(\vz_1,\vz_2) - (\vz_1',\vz_2')\| \;|\; (\vz_1,\vz_2), (\vz_1',\vz_2') \in \mathcal{Y}_1\times [0,B]^l\}, \\
    f_{\text{low}}:=& \min \{f(\vx_1, \vy_1) \;|\; (\vx_1, \vy_1)\in \mathcal{X}_1 \times \mathcal{Y}_1 \},\label{eq:cons_flow}.
\end{align}
Then we can apply Theorem~\ref{thm:minmax} on \eqref{eq:cons_minmax} and \eqref{eq:tildef_cons}. Let $D_1$, $D_2$ and $f_{\text{low}}$ be defined in \eqref{eq:cons_D1}, \eqref{eq:cons_D2} and \eqref{eq:cons_flow}. Let
    \begin{align*}
    f^*= &~\min_{\vx_1, \vy_1} \left\{f(\vx_1, \vy_1)+\rho\left[\bar{f}(\vx_1, \vy_1) + \max_{\vz_2 \in [0, B]}\vz_2^\top\bar{\vg}(\vx_1, \vy_1)
-\bar{f}^*(\vx_1)\right]\right\},\\
        L_{\nabla P_1} = &~L_{\nabla f_1}+2\rho(L_{\nabla\bar{f_1}}+B\sqrt{l} L_{\nabla \bar{g}} + 2L_{\bar{g}}), \\
        \alpha =&~ \min \{ 1, \sqrt{4\epsilon/(D_2 L_{\nabla P_1})} \},\\
        \delta =&~ (2+ \alpha^{-1})L_{\nabla P_1}D_1^2+ \max\{ \epsilon/D_2, \alpha L_{\nabla P_1}/4 \}D_2^2,\\
         K_c = &~\left\lceil 
        \begin{array}{ll}
        16(f(\vx_1^{(0)}, \vy_1^{(0)}) +1 - f_{\text{low}} + \epsilon D_2/4)L_{\nabla P_1} \epsilon^{-2}\\
        + 32 \epsilon^3(1+4(D_2^2L_{\nabla P_1}^2\epsilon^{-2})\epsilon^{-2} -1)    \end{array}\right\rceil_+\\
       C_c = &~  \Big(\frac{4\max\{ \frac{1}{2L_{\nabla P_1} }, \min \{ \frac{D_2}{\epsilon}, \frac{4}{\alpha L_{\nabla P_1}} \} \}}{[(3L_{\nabla P_1} + \epsilon/(2D_2))^2/\min \{ L_{\nabla P_1}, \epsilon/(2D_2)\} + 3 L_{\nabla P_1} + \epsilon/(2D_2)]^{-2}\epsilon^3}\Big)\\
        &~~~~~~~~~~\times\Big(\delta + 2\alpha^{-1} (f^* - f_{\text{low}} + \epsilon D_2/4 + L_{\nabla P_1}D_1^2)\Big), \nonumber\\
       N_c =  &~ (\lceil 96\sqrt{2}(1+(24L_{\nabla P_1} + 4\epsilon/D_2)L_{\nabla P_1}^{-1}) \rceil + 2)\{ 2, \sqrt{D_2L_{\nabla P_1}\epsilon^{-1}} \}\\ &~~~~~~~~~~\times \Big((K_c+1) \times (\log(C_c))_{+} + K_c + 1 + 2K_c\log(K_c+1) \Big).
    \end{align*}
Then by Theorem~\ref{thm:minmax} and Lemma~\ref{thm:e-kkt}, Algorithm~\ref{alg:first-order-new} outputs an $O(\epsilon)$-KKT point $(\vx_1^{\epsilon}, \vy_1^{\epsilon})$ of~\eqref{eq:blo_cons} in oracle complexity no more than $N_c$. Here, $L_{\nabla P_1}=O(\rho)=O(\epsilon^{-1})$, $K_c = O(\epsilon^{-3})$, and $N_c=\tilde O(\epsilon^{-4})$.
\end{proof}

\section{A stochastic first-order method for nonconvex-concave minimax problem}
\label{sec:SAPD}
In this section, our goal is to solve \eqref{eq:bilevel} and \eqref{eq:bilevel_equality}  by solving its penalty formulation \eqref{eq:new_minmax} under the stochastic setting. We still view \eqref{eq:new_minmax} as an instance of \eqref{eq:minmax} and, in addition to Assumption~\ref{assump:h}, we assume the existence of the stochastic first-order oracle of $h$. 
\begin{assumption}
\label{assump:stoc_h}
There exist a random variable $\omega$ and a stochastic gradient estimator  of $\nabla h(x,y)$, which is a mapping of $(x,y)$ and $\omega$ denoted by $\hat{\nabla} h(x,y;\omega)$, and satisfies
 \begin{enumerate}
    \item $\mathbb{E}[\hat{\nabla} h(x,y;\omega)] = \nabla h(x,y)$.
    \item $\mathbb{E}[\|\hat{\nabla} h(x,y;\omega) - \nabla h(x,y)\|^2] \leq \delta^2 $.
 \end{enumerate}
\end{assumption}

To find an $\epsilon$-stationary point, we adopt the inexact proximal-point method again, which generates the next iterate by approximately solving the subproblem \eqref{eq:minmax_prox_h} using a stochastic first-order method. Similarly to the deterministic case, this subproblem is an instance of \eqref{ea-prob} satisfying the following assumption in addition to Assumption~\ref{assump:barh}.
\begin{assumption}
\label{assump:stoc_barh}
There exist a random variable $\omega$ and a stochastic gradient estimator  of $\nabla \bar{h}(x,y)$, which is a mapping of $(x,y)$ and $\omega$ denoted by $\hat{\nabla} \bar{h}(x,y;\omega)$, and satisfies
 \begin{enumerate}
    \item $\mathbb{E}[\hat{\nabla} \bar{h}(x,y;\omega)] = \nabla \bar{h}(x,y)$.
    \item $\mathbb{E}[\|\hat{\nabla} \bar{h}(x,y;\omega) - \nabla \bar{h}(x,y)\|^2] \leq \delta^2 $.
 \end{enumerate}
\end{assumption}
Note that, in the case of \eqref{eq:minmax_prox_h} where $\bar{h}$ \eqref{eq:mapbarh} holds, we are under the setting where
\begin{align}
\label{eq:map_hatnabla_barh}
\hat{\nabla} \bar{h}(x,y;\omega)\leftarrow \hat{\nabla} h(x,y;\omega)+(2L_{\nabla h}(x-x^k), -\epsilon/(2D_q)(y-y^0))
\end{align}
and $\hat{\nabla} \bar{h}(x,y;\omega)$ and $\hat{\nabla} h(x,y;\omega)$ in \eqref{eq:map_hatnabla_barh} have the same vriance bound $\delta^2$.

To solve the generic problem \eqref{ea-prob} in the stochastic setting, one of the optimal algorithms is the SAPD method~\citep{zhang2024robust,zhang2022sapd+} presented in Algorithm~\ref{alg:sapd}. According to \citet[Theorem 2]{zhang2024robust}, to achieve the optimal complexity, the number of iterations $T$ and the control parameters $(\tau,\sigma,\theta)$ are set as follows.
\small
    \begin{align}
    \label{eq:sapd_T}
        &T~= \left\lceil1 + \ln\left(\frac{6\sigma_xD_p^2+6\sigma_yD_q^2}{\hat{\epsilon}^2}\right)\max\left\{(1-\overline{\theta}_1)^{-1},(1-\overline{\theta}_2)^{-1},(1-\overline{\overline{\theta}}_1)^{-1},(1-\overline{\overline{\theta}}_2)^{-1}\right\}\right\rceil.
    \end{align}
\normalsize
\begin{align}
\label{eq:SAPD-parameter-choice-R1}
    \tau = \frac{1-\theta}{\sigma_x \theta},\quad \sigma = \frac{1-\theta}{\sigma_y\theta},\quad \theta=\max\{\overline{\theta}_1,~\overline{\theta}_2,~ \overline{\overline{\theta}}_1 ,\; \overline{\overline{\theta}}_2 \},
\end{align}
where 
\begin{equation}
\small
\label{eq:theta1-R1}
       \overline{\theta}_1:= 1 -\tfrac{\beta (L_{\nabla \bar{h}}+ \sigma_x) \sigma_y}{4L_{\nabla \bar{h}}^2}
          \Big(\sqrt{ 1+ \tfrac{8\sigma_xL_{\nabla \bar{h}}^2}{\beta\sigma_y(L_{\nabla \bar{h}}+\sigma_x)^2}}-1\Big),\quad \overline{\theta}_2:=
        1 - \tfrac{(1-\beta)^2}{32}\tfrac{\sigma_y^2}{L_{\nabla \bar{h}}^2} \Big( \sqrt{1+\tfrac{64L_{\nabla \bar{h}}^2}{(1-\beta)^2\sigma_y^2}}-1\Big).
\end{equation}
\normalsize
\small
\begin{align}
\label{eq:theta_bound_2-R1}
\overline{\overline{\theta}}_1 := \max\Big\{0,1 -
\frac{1}{12\Xi^x(\beta)}\frac{\sigma_x}{\delta^2}\hat{\epsilon}^2\Big\},\quad \overline{\overline{\theta}}_2 := \max\Big\{0,1 -
\frac{1}{12\Xi^y(\beta)}\frac{\sigma_y}{\delta^2}\hat{\epsilon}^2\Big\},
\end{align}
\normalsize
\small
\begin{align*}
{\Xi}^x(\beta):= 1+ \Psi(\beta),\quad {\Xi}^y(\beta):=\frac{27+3\beta}{2} + \frac{\sigma_y}{\sigma_x}\Psi(\beta),\\
\Psi(\beta):= \min\left\{\sqrt{\frac{\beta}{2}\frac{\sigma_x}{\sigma_y}},\frac{1-\beta}{4}\right\},\quad \beta:= \min\left\{\frac{1}{2},\frac{\sigma_y}{\sigma_x},\frac{\sigma_x}{\sigma_y}\right\}.
\end{align*}
\normalsize
Using Algorithm~\ref{alg:sapd} to approximately solve subprpoblem \eqref{eq:minmax_prox_h} in each main iteration leads to a variant of the SAPD+ algorithm~\citep{zhang2022sapd+}, which is presented in Algorithm~\ref{alg:first-order-stoch} for solving  \eqref{eq:minmax}. The stochastic case of Algorithm~\ref{alg:first-order-new} is the specification of Algorithm~\ref{alg:first-order-stoch} when applied to \eqref{eq:new_minmax}.

\begin{algorithm}[t]
\caption{A stochastic accelerated primal-pual (SAPD) algorithm for \eqref{ea-prob}:\texttt{SAPD}$(\bar\epsilon,x^0,y^0,\bar{h},p,q,L_{\nabla \h},\sigma_x,\sigma_y,\delta^2)$}
\label{alg:sapd}
\begin{algorithmic}[1]
\STATE \textbf{Input:} targeted stationarity level $\bar\epsilon>0$, initial solution $(x^0,y^0)\in\dom\,p\times\dom\,q$, $(\bar{h},p,q)$ from \eqref{ea-prob},  $(L_{\nabla \h},\sigma_x,\sigma_y)$ from Assumption~\ref{assump:barh}, and $\delta^2$ from Assumption~\ref{assump:stoc_h}.
\STATE Set $T$, $\tau$, $\sigma$, and $\theta$  as in \eqref{eq:sapd_T}, \eqref{eq:SAPD-parameter-choice-R1},  \eqref{eq:theta1-R1} and \eqref{eq:theta_bound_2-R1}.
\STATE {$\tilde{q}_0\gets 0$}
\FOR{$k=0,1,2,...,T-1$}
\STATE $s^k \leftarrow \hat{\nabla}_y\bar{h}(x^k,y^k;\omega_y^k) + \theta \tilde{q}_k$
\STATE $y^{k+1}\leftarrow\prox_{\sigma q}(y^k+\sigma s_k)$
\STATE $x^{k+1}\leftarrow\prox_{\tau p}(x^k-\tau\hat{\nabla}_x\bar{h}(x^k,y^{k+1};\omega_x^k))$
\STATE $q_{k+1}\leftarrow \hat{\nabla}_y\bar{h}(x^{k+1},y^{k+1};\omega_y^{k+1}) -\hat{\nabla}_y\bar{h}(x^k,y^k;\omega_y^k)$
\ENDFOR
\STATE {\textbf{Output}:}{$(x^T,y^T)$}
\end{algorithmic}%
\end{algorithm}

\begin{algorithm}[t]
\caption{A stochastic inexact proximal point method for~\eqref{eq:minmax}}
\label{alg:first-order-stoch}
\begin{algorithmic}[1]
\STATE {\bfseries Input:} number of iterations $K>0$, inner targeted stationarity level $\hat{\epsilon}>0$, initial solution $(x^0,y^0)\in\dom\,p\times\dom\,q$, $(h,p,q)$ from \eqref{eq:minmax},  $L_{\nabla h}$ from Assumption~\ref{assump:h}, $D_q$ in \eqref{eq:DpDq}, and $\delta^2$ from Assumption~\ref{assump:stoc_h}.
\FOR{$k=0,1,2,\ldots$}
\STATE Set $\bar{h}$ as in $\eqref{eq:mapbarh}$ and set $(L_{\nabla \h},\sigma_x,\sigma_y)$ as in \eqref{eq:mapbarL}.
\STATE 
$$
(x^{k+1}, y^{k+1})\leftarrow\texttt{SAPD}\left(\hat{\epsilon},x^{k}, y^{k}, \bar{h}, p,q,L_{\nabla \h},\sigma_x,\sigma_y,\delta^2\right)
$$
\IF{$k=K$
}
\STATE Return $(x^{\epsilon},y^{\epsilon})=(x^{k'+1},y^{k'+1})$,  where $k'$ is uniformaly randomly sampled from $\{0,1,\dots,K-1\}$.
\ENDIF
\ENDFOR
\end{algorithmic}
\end{algorithm}

The complexity of Algorithm~\ref{alg:sapd} is established in~\citet{zhang2024robust,zhang2022sapd+} and Algorithm~\ref{alg:first-order-stoch} is established in \citet{zhang2022sapd+}. However, the result in \citet{zhang2022sapd+} cannot be directly applied to our problem because their convergence theory is for finding a nearly $\epsilon$-stationary point of the outer function, or more specifically, a point $x$ such that $\|\nabla \phi_{2L_{\nabla h}}(x)\|\leq\epsilon$, where
\begin{align}
\label{eq:me_h_old}
\phi_{2L_{\nabla h}}(x):=\min_{x'}\max_y\left[H(x',y)+L_{\nabla h} \|x'-x\|^2\right]
\end{align}
and $H$ is from \eqref{eq:minmax}. However, our goal is instead to find a \emph{nearly $\epsilon$-primal-dual stationary point} of \eqref{eq:minmax}, which is a solution $(x^{\epsilon},y^{\epsilon})$ such that there exists $(\hat{x}^{\epsilon},\hat{y}^{\epsilon})$ such that $\|(x^{\epsilon},y^{\epsilon})-(\hat{x}^{\epsilon},\hat{y}^{\epsilon})\|\leq \epsilon$ and $(\hat{x}^{\epsilon},\hat{y}^{\epsilon})$ is a $\epsilon$-primal-dual stationary point of \eqref{eq:minmax}. Given this difference, we prove the following theorem that characterizes the complexity of Algorithm~\ref{alg:first-order-stoch} for finding a nearly $\epsilon$-primal-dual stationary point. This theorem is not only the key result for establishing the complexity of finding a nearly $\epsilon$-stationary point of \eqref{eq:bilevel} but also an interesting result itself.

\begin{theorem}\label{thm:sapd+_thm}Suppose Assumptions~\ref{assump:h} and ~\ref{assump:stoc_h} hold. Let $H$, $H^*$, $D_p$, and $D_q$ be defined in \eqref{eq:minmax}, \eqref{eq:DpDq} and \eqref{eq:H_low}.
Let 
\small
    \begin{align*}
       &~ K = \left\lceil \left(\mathbb{E}\left[\max_{y}H(x^0,y)\right]- H^* + \epsilon D_q/4\right)\hat{\epsilon}^{-2}\right\rceil,\\
       &~ C= \Bigg\lceil
1
+
\ln\!\left(
\frac{
6L_{\nabla h}D_p^2+3\epsilon D_q
}{
\hat{\epsilon}^2
}
\right)
\Bigg[
\left(
L_{\nabla h}+\max\left\{2L_{\nabla h},\frac{\epsilon}{2D_q}\right\}
\right)
\left(
\frac{2\sqrt{2D_q}}{\sqrt{L_{\nabla h}\epsilon}}
+\frac{3}{L_{\nabla h}}
+\frac{24D_q}{\epsilon}
\right) \\
&\hspace{6cm}
+
\frac{\delta^2}{\hat{\epsilon}^2}
\left(
\frac{21}{L_{\nabla h}}
+
\frac{360D_q}{\epsilon}
\right)
\Bigg]
\Bigg\rceil ,
\\
        &~ N = KC.
    \end{align*}
\normalsize
    Then Algorithm~\ref{alg:first-order-stoch} terminates and outputs a solution $(x^{\epsilon},y^{\epsilon})$ such that there exists a stochastic solution $(\hat{x}^{\epsilon},\hat{y}^{\epsilon})$ such that 
    \begin{align}
    \label{eq:distance_stoc_h}
    \mathbb{E}\|(x^{\epsilon},y^{\epsilon})-(\hat{x}^{\epsilon},\hat{y}^{\epsilon})\|\leq \max\left\{\frac{\hat{\epsilon}}{\sqrt{L_{\nabla h}}},\frac{\hat{\epsilon}}{\sqrt{\epsilon/(2D_q)}}\right\}
    \end{align}
    and
    $(\hat{x}^{\epsilon},\hat{y}^{\epsilon})$ is an $ \max\left\{\sqrt{2L_{\nabla h}}\hat{\epsilon},\frac{\epsilon}{2}\right\}$-primal-dual stationary point of \eqref{eq:minmax} in expectation. As a consequence, Algorithm~\ref{alg:first-order-stoch} finds a nearly $\max\left\{\sqrt{2L_{\nabla h}}\hat{\epsilon},\frac{\epsilon}{2},\frac{\hat{\epsilon}}{\sqrt{L_{\nabla h}}},\frac{\hat{\epsilon}}{\sqrt{\epsilon/(2D_q)}}\right\}$-primal-dual stationary point $(x^{\epsilon},y^{\epsilon})$ of ~\eqref{eq:minmax} in expectation in complexity no more than $N$. Moreover, it holds that
\begin{align}
\label{eq:bound_phik}
\mathbb{E}\left[\max_y H(\hat{x}^{\epsilon},y)\right]\leq 2\mathbb{E}\left[\max_{y}H(x^{0},y)\right]-H^*+\epsilon D_q/2,
\end{align} 
\end{theorem}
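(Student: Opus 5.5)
We will analyze Algorithm~\ref{alg:first-order-stoch} as an inexact proximal point method driven by the potential $\phi_k:=\max_y H(x^k,y)$, or more precisely its smoothed version with the $-\epsilon\|y-y^0\|^2/(4D_q)$ term. For each outer iteration $k$, let $(x_*^{k+1},y_*^{k+1})$ be the exact saddle point of the SCSC subproblem \eqref{eq:minmax_prox_h}, which is $\sigma_x=L_{\nabla h}$-strongly convex and $\sigma_y=\epsilon/(2D_q)$-strongly concave. The candidate comparison point will be $(\hat x^{\epsilon},\hat y^{\epsilon}):=(x_*^{k'+1},y_*^{k'+1})$ with $k'$ the random index. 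Three ingredients are needed. First, an inner accuracy bound: by \citet[Theorem 2]{zhang2024robust}, with $T$ as in \eqref{eq:sapd_T} (which matches $C$ once one plugs in $L_{\nabla\h}$, $\sigma_x$, $\sigma_y$ from \eqref{eq:mapbarL}, using $(1-\theta)^{-1}\lesssim \kappa$-type bounds plus the variance terms $\delta^2/(\sigma\hat\epsilon^2)$), SAPD returns $(x^{k+1},y^{k+1})$ with $\mathbb{E}[\sigma_x\|x^{k+1}-x_*^{k+1}\|^2+\sigma_y\|y^{k+1}-y_*^{k+1}\|^2]\le\hat\epsilon^2$. This directly gives \eqref{eq:distance_stoc_h} by Jensen, since each block distance is at most $\hat\epsilon/\sqrt{\min\{\sigma_x,\sigma_y\}}$.

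Second, we need exact stationarity of the comparison point. The optimality conditions of the subproblem at $(x_*^{k+1},y_*^{k+1})$ give $-2L_{\nabla h}(x_*^{k+1}-x^k)\in\partial_xH$ and $\tfrac{\epsilon}{2D_q}(y_*^{k+1}-y^0)\in\partial_y(-H)$. Consequently $\mathrm{dist}(0,\partial_y(-H))\le \epsilon D_q/(2D_q)=\epsilon/2$ deterministically, and $\mathrm{dist}(0,\partial_xH)\le 2L_{\nabla h}\|x_*^{k+1}-x^k\|$. It therefore suffices to show $\mathbb{E}\|x_*^{k'+1}-x^{k'}\|^2\le \hat\epsilon^2/(2L_{\nabla h})$ on average over $k'$. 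This yields the $\sqrt{2L_{\nabla h}}\hat\epsilon$ bound after Jensen, though possibly only up to an absolute constant that we may need to absorb.

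Third, and this is the main obstacle, is the descent argument. Define $\Phi(x):=\max_y\{H(x,y)-\tfrac{\epsilon}{4D_q}\|y-y^0\|^2\}$, which satisfies $\Phi\le\max_yH\le\Phi+\epsilon D_q/4$. Since $x_*^{k+1}$ minimizes $\Phi(x)+L_{\nabla h}\|x-x^k\|^2$, which is $L_{\nabla h}$-strongly convex because $\Phi$ is $L_{\nabla h}$-weakly convex, we get
\[
\Phi(x_*^{k+1})+ \tfrac{3}{2}L_{\nabla h}\|x_*^{k+1}-x^k\|^2\le\Phi(x^k).
\]
We then pass from $x_*^{k+1}$ to $x^{k+1}$ using the inner error. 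Here $\Phi$ is not smooth, so instead of a Lipschitz bound we will use strong convexity of the prox objective: $\Phi(x^{k+1})+L_{\nabla h}\|x^{k+1}-x^k\|^2\le \Phi(x_*^{k+1})+L_{\nabla h}\|x_*^{k+1}-x^k\|^2+(\text{gap of }x^{k+1})$. The primal gap of the inexact saddle point is controlled by $O(\hat\epsilon^2)$ through SAPD's gap guarantee, with constant depending on $L_{\nabla\h}^2/(\sigma_x\sigma_y)$; if needed, the $\ln(\cdot)$ factor in $C$ will be used to shrink it to $\hat\epsilon^2$. Telescoping over $k=0,\dots,K-1$ and taking expectations gives
\[
\tfrac{L_{\nabla h}}{2}\sum_k\mathbb{E}\|x_*^{k+1}-x^k\|^2\le \mathbb{E}\Phi(x^0)-H^*+\epsilon D_q/4+K\hat\epsilon^2,
\]
where we used $\Phi\ge H^*-\epsilon D_q/4$. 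With $K$ chosen as stated, dividing by $K$ yields the required average bound $O(\hat\epsilon^2/L_{\nabla h})$.

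Finally, \eqref{eq:bound_phik} will come from the same telescoped inequality: $\mathbb{E}\Phi(x_*^{k'+1})\le\mathbb{E}\Phi(x^0)+K\hat\epsilon^2\le 2\mathbb{E}\max_yH(x^0,y)-H^*+\epsilon D_q/4$ by the choice of $K$, and adding $\epsilon D_q/4$ converts $\Phi$ to $\max_yH$. The total complexity is then $N=KC$, since each SAPD call costs $T\le C$ oracle calls.
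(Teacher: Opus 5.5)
Your first two ingredients match the paper: the SAPD last-iterate bound $\mathbb{E}[\sigma_x\|x^{k+1}-x_*^{k+1}\|^2+\sigma_y\|y^{k+1}-y_*^{k+1}\|^2]\le\hat\epsilon^2$, which gives \eqref{eq:distance_stoc_h}, and the optimality conditions at the exact subproblem saddle point. Your coefficient $2L_{\nabla h}$ in the $x$-condition is the correct one. The paper writes $L_{\nabla h}(\hat x^k-x^k)$, so the stated $\sqrt{2L_{\nabla h}}\hat\epsilon$ only holds up to a factor $2$.

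The genuine gap is the descent step. Because you track $\Phi$ at the \emph{inexact} iterates, you need the primal suboptimality
\[
\Phi(x^{k+1})+L_{\nabla h}\|x^{k+1}-x^k\|^2-\min_x\bigl\{\Phi(x)+L_{\nabla h}\|x-x^k\|^2\bigr\}
\]
of the SAPD output to be $O(\hat\epsilon^2)$ in expectation. Nothing available gives that.
\begin{itemize}
\item The only SAPD guarantee that the stated $T\le C$ is calibrated for is the squared-distance bound above. Algorithm~\ref{alg:sapd} returns the last iterate $(x^T,y^T)$, not an ergodic average.
\item Distance does not convert into a quadratic gap here. The subproblem's primal function is a smooth part plus the nonsmooth $p$. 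When $x_*^{k+1}$ lies on the boundary of the compact set $\mathrm{dom}\,p$, or at a kink of $p$, the gap grows \emph{linearly} in $\|x^{k+1}-x_*^{k+1}\|$. It is then only $O(\hat\epsilon/\sqrt{L_{\nabla h}})$ per iteration, and summing this over $K=\Theta(\hat\epsilon^{-2})$ iterations ruins your telescoped bound.
\item Even for smooth $p$, the smooth part is only $O(L_{\nabla\bar h}^2/\sigma_y)$-smooth. The gap then exceeds $\hat\epsilon^2$ by a factor $O(L_{\nabla h}D_q/\epsilon)$.
\item Your remedy via the $\ln(\cdot)$ factor in $C$ does not work. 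In the stochastic regime SAPD's error stalls at a variance floor set by $\overline{\overline{\theta}}_1,\overline{\overline{\theta}}_2$ (the $\delta^2/\hat\epsilon^2$ terms), and more iterations do not lower it. Lowering it would change $C$.
\end{itemize}

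The missing idea is to use the Moreau envelope $\phi(x):=\min_{x'}\{\Phi(x')+L_{\nabla h}\|x'-x\|^2\}$ as the potential instead of $\Phi$. Since $x_*^{k+1}$ is the exact minimizer defining $\phi(x^k)$, you have $\phi(x^k)=\Phi(x_*^{k+1})+L_{\nabla h}\|x_*^{k+1}-x^k\|^2$. Testing the definition of $\phi(x^{k+1})$ at $x'=x_*^{k+1}$ gives
\[
\phi(x^{k+1})\le\phi(x^k)-L_{\nabla h}\|x_*^{k+1}-x^k\|^2+L_{\nabla h}\|x_*^{k+1}-x^{k+1}\|^2 .
\]
The last term equals $\sigma_x\|x^{k+1}-x_*^{k+1}\|^2$. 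Its conditional expectation is at most $\hat\epsilon^2$ by exactly the SAPD bound you already invoke, and $\Phi$ is never evaluated at an inexact point.

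Telescoping with $\mathbb{E}\phi(x^0)\le\mathbb{E}\max_yH(x^0,y)$ and $\phi\ge H^*-\epsilon D_q/4$ yields $L_{\nabla h}\mathbb{E}\|x_*^{k'+1}-x^{k'}\|^2\le2\hat\epsilon^2$. This is a better constant than your $L_{\nabla h}/2$. Finally, \eqref{eq:bound_phik} follows from three facts: $\Phi(x_*^{k+1})\le\phi(x^k)$; $\mathbb{E}\phi(x^k)\le\mathbb{E}\phi(x^0)+k\hat\epsilon^2$ for $k\le K-1$; and $\max_yH\le\Phi+\epsilon D_q/4$.
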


\begin{proof}
We first focus on the the $k$th main iteration. Let $(\hat{x}^k,\hat{y}^k)$ be the unique saddle point of \eqref{eq:minmax_prox_h}. Recall that $(x^{k+1},y^{k+1})$ is returned by  Algorithm~\ref{alg:sapd} initialized at $(x^k,y^k)$ and after running for $T$ iterations. The parameters $\tau$, $\sigma$ and $\theta$ in 
Algorithm~\ref{alg:sapd} are set as in \eqref{eq:SAPD-parameter-choice-R1}, \eqref{eq:theta1-R1} and \eqref{eq:theta_bound_2-R1} according to \citet[Theorem 2]{zhang2024robust}. Therefore, by \citet[Theorem 2]{zhang2024robust} and the value of the quantity $N_\epsilon$ in its proof in their paper, Algorithm~\ref{alg:sapd} ensures
\begin{equation}\label{eq:sapd_result}
     \mathbb{E}[\sigma_x\|x^{k+1}-\hat{x}^k\|^2+\sigma_y\|y^{k+1}-\hat{y}^k\|^2 ]
              \leq \hat{\epsilon}^2
\end{equation}
if 
\small
\begin{equation}\label{eq:sapd_Told}
T\geq 1 + \ln\left(\frac{6\sigma_x\|x^k-\hat{x}^k\|^2+6\sigma_y\|y^k-\hat{y}^k\|^2}{\hat{\epsilon}^2}\right)\max\left\{(1-\overline{\theta}_1)^{-1},(1-\overline{\theta}_2)^{-1},(1-\overline{\overline{\theta}}_1)^{-1},(1-\overline{\overline{\theta}}_2)^{-1}\right\}.
\end{equation}
\normalsize
By \eqref{eq:DpDq}, we have $\|x^k-\hat{x}^k\|^2\leq D_p^2$ and $\|y^k-\hat{y}^k\|^2\leq D_q^2$. Therefore, the choice of $T$ in Algorithm~\ref{alg:sapd} does satisfy \eqref{eq:sapd_Told}, so \eqref{eq:sapd_result} holds.

Note that
\small
\begin{align*}
         (1-\overline{\theta}_1)^{-1}
         =&  \frac{1}{2}\left(\frac{L_{\nabla \bar{h}}}{\sigma_x}+1\right) + \sqrt{\frac{1}{4}\left(\frac{L_{\nabla \bar{h}}}{\sigma_x} + 1\right)^2 + \frac{2L_{\nabla \bar{h}}^2}{\beta \sigma_x\sigma_y}},\quad
         (1-\overline{\theta}_2)^{-1}
     = \frac{1}{2} + \sqrt{\frac{1}{4} +\frac{16L_{\nabla \bar{h}}^2}{\left(1-\beta\right)^2\sigma_y^2}},\\
  (1-\overline{\overline{\theta}}_1)^{-1}=&12{\Xi}^x(\beta)\frac{\delta^2}{\sigma_x\hat{\epsilon}^2},\quad (1-\overline{\overline{\theta}}_2)^{-1}=12{\Xi}^y(\beta)\frac{\delta^2}{\sigma_y\hat{\epsilon}^2}.
\end{align*}
\normalsize
Using the facts that $\sqrt{a^2+b^2}\leq a+b$, that $1/\min\{a,b\}\leq \frac{1}{a}+\frac{1}{b}$, that $\max\{\sigma_x,\sigma_y\}\leq L_{\nabla \bar{h}}$, and the definitions of $\beta$, ${\Xi}^x(\beta)$ and ${\Xi}^y(\beta)$, we have ${\Xi}^x(\beta)\leq 1+\sqrt{1/2}$, ${\Xi}^y(\beta)\leq (27+3/2)/2+1/\sqrt{2}$,
\normalsize
\small
\begin{align*}
         (1-\overline{\theta}_1)^{-1}
         \leq& \frac{2L_{\nabla \bar{h}}}{\sqrt{\sigma_x\sigma_y}}+ \frac{3L_{\nabla \bar{h}}}{\sigma_x}+ \frac{3L_{\nabla \bar{h}}}{\sigma_y},\quad
         (1-\overline{\theta}_2)^{-1}
     \leq \frac{9L_{\nabla \bar{h}}}{\sigma_y},\\
  (1-\overline{\overline{\theta}}_1)^{-1}=&\frac{21\delta^2}{\sigma_x\hat{\epsilon}^2},\quad (1-\overline{\overline{\theta}}_2)^{-1}=\frac{180\delta^2}{\sigma_y\hat{\epsilon}^2}.
\end{align*}
\normalsize
These results imply that 
\small
$$
T\leq\left\lceil1 + \ln\left(\frac{6\sigma_xD_p^2+6\sigma_yD_q^2}{\hat{\epsilon}^2}\right)\left(\frac{2L_{\nabla \bar{h}}}{\sqrt{\sigma_x\sigma_y}}+ \frac{3L_{\nabla \bar{h}}}{\sigma_x}+ \frac{12L_{\nabla \bar{h}}}{\sigma_y}+\frac{21\delta^2}{\sigma_x\hat\epsilon^2}+\frac{180\delta^2}{\sigma_y\hat\epsilon^2}\right)\right\rceil.
$$
\normalsize
Since we are analyzing Algorithm~\ref{alg:sapd} when it is applied \eqref{eq:minmax_prox_h} under the setting of \eqref{eq:mapbarh} and \eqref{eq:mapbarL}, replacing $L_{\nabla\h}$, $\sigma_x$, and $\sigma_y$ in the right-hand side above gives
\small
\begin{equation}
\label{eq:TC}
\begin{aligned}
T
\leq
\Bigg\lceil
1
&+
\ln\!\left(
\frac{
6L_{\nabla h}D_p^2+3\epsilon D_q
}{
\hat{\epsilon}^2
}
\right)
\Bigg[
\left(
L_{\nabla h}+\max\left\{2L_{\nabla h},\frac{\epsilon}{2D_q}\right\}
\right)
\left(
\frac{2\sqrt{2D_q}}{\sqrt{L_{\nabla h}\epsilon}}
+\frac{3}{L_{\nabla h}}
+\frac{24D_q}{\epsilon}
\right) \\
&\hspace{4cm}
+
\frac{\delta^2}{\hat{\epsilon}^2}
\left(
\frac{21}{L_{\nabla h}}
+
\frac{360D_q}{\epsilon}
\right)
\Bigg]
\Bigg\rceil =C.
\end{aligned}
\end{equation}
\normalsize
This result shows that the complexity of each call of Algorithm~\ref{alg:sapd} is bounded by $C$ in each main iteration of Algorithm~\ref{alg:first-order-stoch}.

Nest, we will analyze the number of main iterations needed in Algorithm~\ref{alg:first-order-stoch}. 
Let $\phi_{2L_{\nabla h}}^{\epsilon}(x)$ be the Moreau envelope of $\max_y\left[H(x,y)-\epsilon\|y-y^0\|^2/(4D_q)\right]$, i.e., 
\begin{align}
\label{eq:me_h}
\phi_{2L_{\nabla h}}^{\epsilon}(x):=\min_{x'}\max_y\left[H(x',y)-\epsilon\|y-y^0\|^2/(4D_q)+L_{\nabla h} \|x'-x\|^2\right].
\end{align}
We then have 
\small
\begin{align}
\nonumber
&\phi_{2L_{\nabla h}}^{\epsilon}(x^{k+1})\\\nonumber
=&\min_{x}\max_y\left[H(x,y)-\epsilon\|y-y^0\|^2/(4D_q)+L_{\nabla h} \|x-x^{k+1}\|^2\right]\\\nonumber
\leq&\max_y\left[H(\hat{x}^k,y)-\epsilon\|y-y^0\|^2/(4D_q)\right]+L_{\nabla h} \|\hat{x}^k-x^{k+1}\|^2\\\nonumber
=&\max_y\left[H(\hat{x}^k,y)-\epsilon\|y-y^0\|^2/(4D_q)\right]+L_{\nabla h} \|\hat{x}^k-x^{k}\|^2-L_{\nabla h} \|\hat{x}^k-x^{k}\|^2+L_{\nabla h} \|\hat{x}^k-x^{k+1}\|^2\\\nonumber
=&\phi_{2L_{\nabla h}}^{\epsilon}(x^{k})-L_{\nabla h} \|\hat{x}^k-x^{k}\|^2+L_{\nabla h} \|\hat{x}^k-x^{k+1}\|^2.
\end{align}
\normalsize
Taking the expectation and summing the inequality above for $k=0,1,\dots,K-1$ gives
\begin{align}
\nonumber
\sum_{k=0}^{K-1} L_{\nabla h} \mathbb{E}\|\hat{x}^k-x^{k}\|^2
\leq&\mathbb{E}\left[\phi_{2L_{\nabla h}}^{\epsilon}(x^{0})-\phi_{2L_{\nabla h}}^{\epsilon}(x^{K})\right]
+\sum_{k=0}^{K-1} L_{\nabla h} \mathbb{E}\|\hat{x}^k-x^{k+1}\|^2\\\label{eq:summup_k}
\leq &\mathbb{E}\left[\phi_{2L_{\nabla h}}^{\epsilon}(x^{0})-\phi_{2L_{\nabla h}}^{\epsilon}(x^{K})\right]+K\hat{\epsilon}^2.
\end{align}
where the last inequality is because of \eqref{eq:sapd_result} and the fact that $\sigma_x=L_{\nabla h}$. Moreover, by definitions, it is easy to show that
$$
\phi_{2L_{\nabla h}}^{\epsilon}(x^{0})\leq \max_{y}H(x^{0},y),\quad
\phi_{2L_{\nabla h}}^{\epsilon}(x^{K})\geq H^*-\epsilon D_q/4.
$$
Recall that $k'$ is uniformly randomly sampled from $\{0,1,\dots,K-1\}$. 
The two inequalities above and  \eqref{eq:summup_k} together imply 
\begin{align}
L_{\nabla h}^2 \mathbb{E}\|\hat{x}^{k'}-x^{k'}\|^2\label{eq:summup_exp}
\leq &\frac{L_{\nabla h}(\mathbb{E}[\max_{y}H(x^{0},y)]-H^*+\epsilon D_q/4)}{K}+L_{\nabla h}\hat{\epsilon}^2
\leq2L_{\nabla h}\hat{\epsilon}^2,
\end{align}
where the second inequality is from the definition of $K$.

Recall that Algorithm~\ref{alg:first-order-stoch} returns $(x^{\epsilon},y^{\epsilon})=(x^{k'+1},y^{k'+1})$,  where $k'$ is uniformaly randomly sampled from $\{0,1,\dots,K-1\}$. We will show that $(\hat{x}^{\epsilon},\hat{y}^{\epsilon})=(\hat{x}^{k'},\hat{y}^{k'})$ will be the stochastic solution in the statement of the theorem.

By the optimal condition of $(\hat{x}^k,\hat{y}^k)$, it holds for any $k\geq0$ that
\begin{align*}
\mathbf{0}\in&\partial p(\hat{x}^k)+\nabla_x h(\hat{x}^k,\hat{y}^k)+L_{\nabla h}(\hat{x}^k-x^k),\\
\mathbf{0}\in&\partial q(\hat{x}^k)-\nabla_y h(\hat{x}^k,\hat{y}^k)+\epsilon(\hat{y}^k-y^0)/(2D_q),
\end{align*}
By \eqref{eq:summup_exp} and Jensen's inequality, 
the two inclusions above imply that 
\begin{align}
\label{eq:xhat_stationary}
\mathbb{E}\text{dist}\left(\mathbf{0},\partial p(\hat{x}^{k'})+\nabla_x h(\hat{x}^{k'},\hat{y}^{k'})\right)\leq& L_{\nabla h}\mathbb{E}\|\hat{x}^{k'}-x^{k'}\|\leq \sqrt{2L_{\nabla h}}\hat{\epsilon},\\
\label{eq:yhat_stationary}
\mathbb{E}\text{dist}\left(\mathbf{0},\partial q(\hat{x}^{k'})-\nabla_y h(\hat{x}^{k'},\hat{y}^{k'})\right)\leq &\epsilon\mathbb{E}\|\hat{y}^{k'}-y^0\|/(2D_q)\leq \frac{\epsilon}{2},
\end{align}
which means $(\hat{x}^{\epsilon},\hat{y}^{\epsilon})=(\hat{x}^{k'},\hat{y}^{k'})$ is an $\max\left\{\sqrt{2L_{\nabla h}}\hat{\epsilon},\frac{\epsilon}{2}\right\}$-primal-dual solution of \eqref{eq:minmax} in expectation.

Additionally, by \eqref{eq:sapd_result}, Jensen's inequality and the facts that $\sigma_x=L_{\nabla h}$ and $\sigma_y=\epsilon/(2D_q)$, it also holds that
\begin{align}
\nonumber
   \mathbb{E}\|(x^{\epsilon},y^{\epsilon})-(\hat{x}^{\epsilon},\hat{y}^{\epsilon})\|
  =& \mathbb{E}\|(x^{k'+1},y^{k'+1})-(\hat{x}^{k'},\hat{y}^{k'})\|
  \leq \sqrt{\mathbb{E}\|(x^{k'+1},y^{k'+1})-(\hat{x}^{k'},\hat{y}^{k'})\|^2}\\ \label{eq:xhatyhat_distance}
  \leq& \sqrt{\frac{\hat{\epsilon}^2}{\min\{\sigma_x,\sigma_y\}}} \leq \max\left\{\frac{\hat{\epsilon}}{\sqrt{L_{\nabla h}}},\frac{\hat{\epsilon}}{\sqrt{\epsilon/(2D_q)}}\right\}
\end{align}

Comparing the orders of the right-hand sides in \eqref{eq:xhat_stationary}, \eqref{eq:yhat_stationary}, and \eqref{eq:xhatyhat_distance}, we conclude by definition that $\left(x^{\epsilon},y^{\epsilon}\right)=\left(x^{k'+1},y^{k'+1}\right)$ is a nearly $\max\left\{\sqrt{2L_{\nabla h}}\hat{\epsilon},\frac{\epsilon}{2},\frac{\hat{\epsilon}}{\sqrt{L_{\nabla h}}},\frac{\hat{\epsilon}}{\sqrt{\epsilon/(2D_q)}}\right\}$-primal-dual stationary point of ~\eqref{eq:minmax} in expectation. It is found in complexity no more than $TK\leq CK=N$. 

Next we prove the second conclusion. By the same argument we used to show \eqref{eq:summup_k}, it is easy to see that \eqref{eq:summup_k} holds even if we replace $K$ by $k=0,1,\dots,K-1$. In fact, we have, for $k=0,1,\dots,K-1$, that
\begin{align}
\nonumber
0\leq \sum_{l=0}^{k-1} L_{\nabla h} \mathbb{E}\|\hat{x}^l-x^{l}\|^2
\leq \mathbb{E}\left[\phi_{2L_{\nabla h}}^{\epsilon}(x^{0})-\phi_{2L_{\nabla h}}^{\epsilon}(x^{k})\right]+k\hat{\epsilon}^2.
\end{align}
which implies
\begin{align}
\nonumber
\mathbb{E}\phi_{2L_{\nabla h}}^{\epsilon}(x^{k})\leq &\phi_{2L_{\nabla h}}^{\epsilon}(x^{0})+k\hat{\epsilon}^2
\leq\mathbb{E}[\max_{y}H(x^{0},y)]+k\hat{\epsilon}^2\\\label{eq:bound_phik_old}
\leq& \mathbb{E}[\max_{y}H(x^{0},y)]+K\hat{\epsilon}^2\leq 2\mathbb{E}[\max_{y}H(x^{0},y)]-H^*+\epsilon D_q/4,
\end{align}
for $k=0,\dots,K-1$, where the last inequality is from the definition of $K$.
On the other hand, we have
$$
\phi_{2L_{\nabla h}}^{\epsilon}(x^{k})=\max_y\left[H(\hat{x}^{k},y)-\epsilon\|y-\hat{y}^0\|^2/(4D_q)+L_{\nabla h} \|\hat{x}^{k}-x^k\|^2\right]
\geq \max_y H(\hat{x}^{k},y)-\epsilon D_q/4.
$$
Applying the inequality above to \eqref{eq:bound_phik_old} leads to 
\begin{align*}
\mathbb{E}\left[\max_y H(\hat{x}^{k},y)\right]\leq 2\mathbb{E}[\max_{y}H(x^{0},y)]-H^*+\epsilon D_q/2,
\end{align*}
for $k=0,1,\dots,K-1$. Letting $k=k'$ above and taking  expectation over $k'$ leads to \eqref{eq:bound_phik}.
\end{proof}

\section{Proof of Theorem~\ref{thm:minmax_stoch}}\label{sec:proof_thm2}
\begin{proof}[Proof of Theorem~\ref{thm:minmax_stoch}]

Just like the deterministic case, subproblem \eqref{eq:minmax_prox} is solved by Algorithm~\ref{alg:sapd} as an instance of \eqref{ea-prob} under the setting of \eqref{eq:mapPrho} and \eqref{eq:mapPrhoL}. In this setting, the stochastic gradient of $\bar{h}$ is set as 
\begin{align}
\nonumber
\hat{\nabla} \bar{h}(x,y;\omega)\leftarrow& ~
\hat\nabla f_1(\vx_1, \vx_2, \vy_1, \vy_2;\omega) + \rho\big(\hat\nabla\tilde{f}_1(\vx_1, \vy_1, \vz_2;\omega)
-\hat\nabla\tilde{f}_1(\vx_1 , \vz_1, \vy_2;\omega)\big)\\ \label{eq:map_hatnabla_barh_P}
&~+\left(\begin{array}{c}
2L_{\nabla P_1}\left[(\vx_1,\vy_1,\vy_2)-(\vx_1^{(k)},\vy_1^{(k)},\vy_2^{(k)})\right],\\
-\frac{\epsilon}{D_2}\left[(\vx_2,\vz_1,\vz_2)-(\vx_2^{(k)},\vz_1^{(k)},\vz_2^{(k)})\right]
\end{array}
\right),
\end{align}
where $L_{\nabla P_1}=L_{\nabla f} + 2\rho L_{\nabla \tilde{f}}$ and $D_2$ is defined in \eqref{eq:D2}. Under Assumption~\ref{assump:stoc}, we have
\small
\begin{align*}
&~\mathbb{E}\left\|
\begin{array}{ll}
\hat\nabla f_1(\vx_1, \vx_2, \vy_1, \vy_2;\omega) + \rho\big(\hat\nabla\tilde{f}_1(\vx_1, \vy_1, \vz_2;\omega)-\hat\nabla\tilde{f}_1(\vx_1 , \vz_1, \vy_2;\omega)\big)\\
-
\nabla f_1(\vx_1, \vx_2, \vy_1, \vy_2) + \rho\big(\nabla\tilde{f}_1(\vx_1, \vy_1, \vz_2)-\nabla\tilde{f}_q(\vx_1 , \vz_1, \vy_2)\big)
\end{array}
\right\|^2\\
\leq&~3\mathbb{E}\left\|\hat\nabla f_1(\vx_1, \vx_2, \vy_1, \vy_2;\omega)-\nabla f_1(\vx_1, \vx_2, \vy_1, \vy_2)\right\|^2\\
&+3\rho^2\mathbb{E}\left\|\hat\nabla\tilde{f}_1(\vx_1, \vy_1, \vz_2;\omega)-\nabla\tilde{f}_1(\vx_1, \vy_1, \vz_2)\right\|^2
+3\rho^2\mathbb{E}\left\|\hat\nabla\tilde{f}_1(\vx_1 , \vz_1, \vy_2;\omega)-\nabla\tilde{f}_q(\vx_1 , \vz_1, \vy_2)\right\|^2\\
\leq&~ 3\delta_{f}^2 + 6\rho^2\delta_{\tilde{f}}^2=\delta_P^2,
\end{align*}
\normalsize
where $\delta_P^2$ is the same as in Algorithm~\ref{alg:first-order-new}. 
This means $\hat{\nabla} \bar{h}(x,y;\omega)$ 
specified in \eqref{eq:map_hatnabla_barh_P} satisfies Assumption~\ref{assump:stoc_barh} with 
\begin{align}
\label{eq:delta_P}
\delta^2=\delta_{P}^2=3\delta_{f}^2 + 6\rho^2\delta_{\tilde{f}}^2=O(\epsilon^{-2}).
\end{align}

Since problem \eqref{eq:new_minmax} is also an instance of \eqref{eq:minmax} under the setting of \eqref{eq:maph} and \eqref{eq:mapLh}, Algorithm~\ref{alg:first-order-new} is exactly Algorithm~\ref{alg:first-order-stoch}
under the setting of \eqref{eq:maph} and \eqref{eq:mapLh}. Therefore, we can directly apply Theorem~\ref{thm:sapd+_thm} and Lemma~\ref{thm:connection} to prove Theorem~\ref{thm:minmax_stoch} as we show below.


Let $f^*$, $D_1$, $D_2$ and $f_{\text{low}}$  be defined in \eqref{eq:new_minmax}, \eqref{eq:D1}, \eqref{eq:D2} and \eqref{eq:flow}. Let
\small
    \begin{align*}
        &~L_{\nabla P_1} = L_{\nabla f_1}+2\rho L_{\nabla\tilde{f}_1},\\
        &~\delta_{P}^2=3\delta_{f}^2 + 6\rho^2\delta_{\tilde{f}}^2,\\\label{eq:sapd_TP}
        &~ C_P= \Bigg\lceil
1
+
\ln\!\left(
\frac{
6L_{\nabla P_1}D_p^2+3\epsilon D_2
}{
\hat{\epsilon}^2
}
\right)
\Bigg[
\left(
L_{\nabla P_1}+\max\left\{2L_{\nabla P_1},\frac{\epsilon}{2D_2}\right\}
\right)
\left(
\frac{2\sqrt{2D_2}}{\sqrt{L_{\nabla P_1}\epsilon}}
+\frac{3}{L_{\nabla P_1}}
+\frac{24D_2}{\epsilon}
\right) \\
&\hspace{6cm}
+
\frac{\delta_P^2}{\hat{\epsilon}^2}
\left(
\frac{21}{L_{\nabla P_1}}
+
\frac{360D_2}{\epsilon}
\right)
\Bigg]
\Bigg\rceil\\
       &~ K_P = \left\lceil \left(\mathbb{E}\left[\max_{\vx_2}f(\vx_1^{(0)}, \vx_2, \vy_1^{(0)}, \vy_2^{(0)})\right]+1- f_{\text{low}} + \epsilon D_2/4\right)\hat{\epsilon}^{-2}\right\rceil\\
        &~ N_P = C_PK_P.
    \end{align*}
\normalsize
    Note that 
$$
\mathbb{E}\left[\max_{\vx_2, \vz_1, \vz_2} P_\rho(\vx_1^{(0)},\vx_2, \vy_1^{(0)},\vy_2^{(0)}, \vz_1, \vz_2)\right]
\leq
\mathbb{E}\left[\max_{\vx_2}f(\vx_1^{(0)}, \vx_2, \vy_1^{(0)}, \vy_2^{(0)})\right]+1
$$ 
because of the initial condition \eqref{eq:initial_condition_stoch} and the fact that $\rho=\frac{1}{\epsilon}$.

Recall that $\hat{\epsilon}$ is set to be $\epsilon^{1.5}$ in Algorithm~\ref{alg:first-order-new}. 
According to the discussion above, Algorithm~\ref{alg:first-order-new} is exactly Algorithm~\ref{alg:first-order-stoch}
under the setting of \eqref{eq:maph} and \eqref{eq:mapLh}. In this setting, $\max_{y}H(x^0,y)$ in Theorem~\ref{thm:sapd+_thm} corresponds to 
$$
\max_{\vx_2, \vz_1, \vz_2} P_\rho(\vx_1^{(0)},\vx_2, \vy_1^{(0)},\vy_2^{(0)}, \vz_1, \vz_2).
$$
Applying Theorem~\ref{thm:sapd+_thm} under this setting with $\hat{\epsilon}=\epsilon^{1.5}$  and with
$\max_{y}H(x^0,y)$ in $K$ in Theorem~\ref{thm:sapd+_thm}  replaced by 
its upper bound 
$$
\mathbb{E}\left[\max_{\vx_2}f(\vx_1^{(0)}, \vx_2, \vy_1^{(0)}, \vy_2^{(0)})\right]+1,
$$
we can  show that, in complexity no more than $N_P$, Algorithm~\ref{alg:first-order-new} outputs $\left((\vx_1^{\epsilon}, \vy_1^{\epsilon}, \vy_2^{\epsilon}),(\vx_2^{\epsilon}, \vz_1^{\epsilon}, \vz_2^{\epsilon})\right)$ such that there exists a stochastic solution $\left((\hat{\vx}_1^{\epsilon}, \hat{\vy}_1^{\epsilon}, \hat{\vy}_2^{\epsilon}),(\hat{\vx}_2^{\epsilon}, \hat{\vz}_1^{\epsilon}, \hat{\vz}_2^{\epsilon})\right)$ such that 
\begin{equation}
\label{eq:distance_stoc_P}
 \begin{aligned}
&\mathbb{E}\left\|
(\vx_1^{\epsilon}, \vy_1^{\epsilon}, \vy_2^{\epsilon})
-
(\hat{\vx}_1^{\epsilon}, \hat{\vy}_1^{\epsilon}, \hat{\vy}_2^{\epsilon})
\right\|
\\
\leq&
\mathbb{E}\left\|
\left(
(\vx_1^{\epsilon}, \vy_1^{\epsilon}, \vy_2^{\epsilon}),
(\vx_2^{\epsilon}, \vz_1^{\epsilon}, \vz_2^{\epsilon})
\right)
-
\left(
(\hat{\vx}_1^{\epsilon}, \hat{\vy}_1^{\epsilon}, \hat{\vy}_2^{\epsilon}),
(\hat{\vx}_2^{\epsilon}, \hat{\vz}_1^{\epsilon}, \hat{\vz}_2^{\epsilon})
\right)
\right\|
\nonumber\\
\leq&
\max\left\{
\frac{\hat{\epsilon}}{\sqrt{L_{\nabla P_1}}},
\frac{\hat{\epsilon}}{\sqrt{\epsilon/(2D_2)}}
\right\}
\end{aligned}
\end{equation}
and   $\left((\hat{\vx}_1^{\epsilon}, \hat{\vy}_1^{\epsilon}, \hat{\vy}_2^{\epsilon}),(\hat{\vx}_2^{\epsilon}, \hat{\vz}_1^{\epsilon}, \hat{\vz}_2^{\epsilon})\right)$ is an $ \max\left\{\sqrt{2L_{\nabla P_1}}\hat{\epsilon},\frac{\epsilon}{2}\right\}$-primal-dual stationary point of \eqref{eq:new_minmax} in expectation.

Since $\hat{\epsilon}=\epsilon^{1.5}$ and $L_{\nabla P_1}=O(\rho)=O(\epsilon^{-1})$, we have
$$
\max\left\{\sqrt{2L_{\nabla P_1}}\hat{\epsilon},\frac{\epsilon}{2},\frac{\hat{\epsilon}}{\sqrt{L_{\nabla P_1}}},\frac{\hat{\epsilon}}{\sqrt{\epsilon/(2D_2)}}\right\}
=O(\epsilon)
$$
meaning that 
\begin{align}
\label{eq:distance_xhatx}
\mathbb{E}\left\|
(\vx_1^{\epsilon}, \vy_1^{\epsilon}, \vy_2^{\epsilon})
-
(\hat{\vx}_1^{\epsilon}, \hat{\vy}_1^{\epsilon}, \hat{\vy}_2^{\epsilon})
\right\|=O(\epsilon)
\end{align}
and $\left((\hat{\vx}_1^{\epsilon}, \hat{\vy}_1^{\epsilon}, \hat{\vy}_2^{\epsilon}),(\hat{\vx}_2^{\epsilon}, \hat{\vz}_1^{\epsilon}, \hat{\vz}_2^{\epsilon})\right)$ is an $O(\epsilon)$-primal-dual stationary point of \eqref{eq:new_minmax} in expectation.

Additionally, \eqref{eq:bound_phik} in Theorem~\ref{thm:sapd+_thm} also implies that $\left((\hat{\vx}_1^{\epsilon}, \hat{\vy}_1^{\epsilon}, \hat{\vy}_2^{\epsilon}),(\hat{\vx}_2^{\epsilon}, \hat{\vz}_1^{\epsilon}, \hat{\vz}_2^{\epsilon})\right)$  satisfies
\small
\begin{align*}
    \mathbb{E}\left[\max_{\vx_2, \vz_1, \vz_2}P_\rho(\hat{\vx}_1^{\epsilon},\vx_2, \hat{\vy}_1^{\epsilon},\hat{\vy}_2^{\epsilon}, \vz_1, \vz_2)\right]\leq& 2\mathbb{E}\left[\max_{\vx_2, \vz_1, \vz_2} P_\rho(\vx_1^{(0)},\vx_2, \vy_1^{(0)},\vy_2^{(0)}, \vz_1, \vz_2)\right]-f^*
    +\epsilon D_2/2.
\end{align*}
\normalsize
On the other hand, by condition ~\eqref{eq:initial_condition_stoch}, it holds that
\small
\begin{align*}
\mathbb{E}\left[\max_{\vx_2, \vz_1, \vz_2}P_\rho(\vx_1^{(0)},\vx_2, \vy_1^{(0)},\vy_2^{(0)}, \vz_1, \vz_2) \right]
=& \mathbb{E}\left[\max_{\vx_2}f(\vx_1^{(0)}, \vx_2, \vy_1^{(0)}, \vy_2^{(0)}) + \rho\big(p(\vx_1^{(0)}, \vy_1^{(0)})-d(\vx_1^{(0)} ,\vy_2^{(0)})\big)\right]\\
\leq &
\mathbb{E}\left[\max_{\vx_2}f(\vx_1^{(0)}, \vx_2, \vy_1^{(0)}, \vy_2^{(0)})\right]+1.
\end{align*}
\normalsize
Combining the two inequalities above gives
\small
\begin{align*}
    &\rho^{-1}\left(\mathbb{E}\left[\max_{\vx_2, \vz_1, \vz_2}P_\rho(\hat{\vx}_1^{\epsilon},\vx_2, \hat{\vy}_1^{\epsilon},\hat{\vy}_2^{\epsilon}, \vz_1, \vz_2) \right]
    -f_{\text{low}}\right)\\
    \leq&  \rho^{-1}\left(2\mathbb{E}\left[\max_{\vx_2, \vz_1, \vz_2} P_\rho(\vx_1^{(0)},\vx_2, \vy_1^{(0)},\vy_2^{(0)}, \vz_1, \vz_2)\right]
    -f^*-f_{\text{low}}+\epsilon D_2/2\right)\\
    \leq&  \rho^{-1}\left( 2\mathbb{E}\left[\max_{\vx_2}f(\vx_1^{(0)}, \vx_2, \vy_1^{(0)}, \vy_2^{(0)})\right]+2
    -f^*-f_{\text{low}}+\epsilon D_2/2\right)\\
    \leq& O(\epsilon),
\end{align*}
\normalsize
where the last inequality is because $L_{\nabla P_1}=O(\rho)=O(\epsilon^{-1})$. By Lemma~\ref{thm:connection}, $(\hat{\vx}_1^{\epsilon}, \hat{\vy}_1^{\epsilon}, \hat{\vy}_2^{\epsilon})$ is an  $O(\epsilon)$-KKT point of \eqref{eq:bilevel} in expectation. Because of \eqref{eq:distance_xhatx}, $(\vx_1^{\epsilon}, \vy_1^{\epsilon}, \vy_2^{\epsilon})$ is a nearly $O(\epsilon)$-KKT point of \eqref{eq:bilevel} in expectation.  

Moreover, since $L_{\nabla P_1}=O(\rho)=O(\epsilon^{-1})$, $\delta_P^2=O(\rho^2)=O(\epsilon^{-2})$, we have 
$C_R=\tilde{O}(\epsilon^{-6})$, $K_P = O(\epsilon^{-3})$ and $N_P=\tilde O(\epsilon^{-9})$. Therefore, Algorithm~\ref{alg:first-order-new}
has a complexity of $\tilde O(\epsilon^{-9})$ for finding  a nearly $O(\epsilon)$-KKT point of \eqref{eq:bilevel} in expectation.
\end{proof}

\section{Additional details on numerical experiments}\label{sec:imp_detail}
Synthetic experiments are run on Intel Xeon Gold 6438Y+, and deep DRO experiments are run on one H100 GPU.
\subsection{Constrained Linear Optimization}
\subsubsection{Instance generation}\label{sec:clo_instances}
We first randomly generate $c$ and $d$ with all the entries independently chosen from $\mathcal{N}(0,1)$. We then randomly generate $\widetilde A$ and $\widetilde B$ with all the entries independently chosen from $\mathcal{N}(0,0.01)$. In addition, we randomly generate $\hat y\in[-1,1]^m$ with all the entries independently chosen from $\mathcal{N}(0,0.1)$ and then projected to $[-1,1]^m$ and choose $\tilde d$ and $\tilde b$ such that $\hat y$ is an optimal solution of the lower level optimization of Problem~\eqref{eq:clo} with $x=0$ and $y=0$. Occasionally, generated instance cannot converge for SMO, so we only take into account of instances where all four methods converge for fair comparison.
\subsubsection{Hyperparameters}
Hyperparameters of our baselines are based exactly on the description of~\citet[Section 3]{lu2025solving}. It is worth mentioning that for the FOP baseline, they did not implement the pseudocode of ~\citet[Algorithm 2]{lu2024firstorder} but instead used a practical variant with a schedule for ($\varepsilon_k,\rho_k$). We similarly implemented the variant for the FOP baseline.
For all methods, Algorithm~\ref{mmax-alg1} is run for a maximum of 200 iterations.

For our method, we used a Lagrange bound of $B=200$.

\subsection{DRO with hyperparameter tuning}
\label{sec:dro_hyper_tuning_details}

\subsubsection{Datasets}
We use the exact same split for both datasets as provided by~\citet{sagawa2020distributionally}. The primary task of the target label prediction is severely confounded by a spurious attribute (See Table~\ref{tab:dro_description}). In the CelebA dataset, standard ERM models tend to exploit the spurious correlation, effectively using gender as a proxy for hair color to minimize average loss. This shortcut learning leads to disastrous performance on minority group, i.e., blond male~\citep{sagawa2020distributionally}. To show the severity of this issue along the same strand, CUB images~\citep{wah2011cub} are overlaid on top of Places images~\citep{zhou2017places} with imbalanced proportions to create the Waterbirds dataset as done by~\citet{sagawa2020distributionally}, where the labels are the type of the bird (waterbirds vs.\ landbirds), confounded by the background (water vs.\ land).
Because the data is defined by the intersection of the target label and the spurious attribute, it naturally partitions into $G=4$ predefined groups. With deep learning paradigms, we use the proposed stochastic bilevel optimization from Section~\ref{sec:stochastic_bilevel}.
\begin{table}[h]
\centering
\begin{minipage}{0.4\linewidth}
\centering
\begin{tabular}{l cc}
\hline
\textbf{CelebA Groups} & $|D_{\textbf{train}}|$ & $|D_{\textbf{val}}|$ \\
\hline
Dark Hair, Female & 71,629 & 8,535 \\
Dark Hair, Male   & 66,874 & 8,276\\
Blond Hair, Female & 22,880 & 2,874\\
Blond Hair, Male & 1,387 & 182\\
\hline
\end{tabular}
\end{minipage}
\hfill
\begin{minipage}{0.55\linewidth}
\centering
\begin{tabular}{l cc}
\hline
\textbf{Waterbirds Groups} & $|D_{\textbf{train}}|$ & $|D_{\textbf{val}}|$ \\
\hline
Landbirds, land background   & 3,498&467 \\
Waterbirds, water background & 1,057&466 \\
Landbirds, water background  & 184&133 \\
Waterbirds, land background  & 56&133 \\
\hline
\end{tabular}
\end{minipage}
\vspace{1em}
\caption{Statistics for Training and Validation Set. Group descriptions are \{label, confounder\}.} 
\label{tab:dro_description}
\end{table}
\vspace{-1.5em}
\subsubsection{Architecture}
We use ResNet-50~\cite{he2016resnet} for representation learning on both datasets. 
\subsubsection{Hyperparameters and sensitivity analysis}
In Table~\ref{tab:ablation}, we tune the important parameters on the validation set. We find $T=1$, i.e., a single iteration for solving the inexact proximal point, in Algorithm~\eqref{alg:sapd} to work well.
For all experiments, $D_y=100$ and $\varepsilon_0=10.0$. For CelebA, $\ell_2$-penalty $\lambda=0.1$, batch size is 256, and best performance occurs at $k=10$ in Algorithm~\eqref{alg:first-order-new}. For Waterbirds, $\lambda=1.0$, batch size is 128, and best performance occurs at $k=23$ in Algorithm~\eqref{alg:first-order-new}. 
For other unmentioned hyperparameters, we use the default value provided in the code by~\citet{sagawa2020distributionally}.
\begin{table}[htbp]
\vspace{-1em}
\centering
\begin{tabular}{crrr}
\toprule
\textbf{Hyperparameter} & \textbf{Value} & \multicolumn{2}{c}{\textbf{Accuracy}} \\
 &  & CelebA & Waterbirds \\
\midrule

\multirow{3}{*}{$\rho$ in Algorithm~\eqref{alg:first-order-new}}
 & 10  & 78.1 & 81.3 \\
 & \textbf{100} & 92.2 & 93.8 \\
 & 500 & 28.1 & 78.1\\

\midrule
\multirow{3}{*}{$L_{\nabla h}$}
 & 500  & 82.7 & 28.1 \\
 & \textbf{1000} & 92.2 & 93.8 \\
 & 2000 & 78.1 & 90.6\\


\midrule
\multirow{3}{*}{$\eta_{\text{init}}$ in Objective~\eqref{eq:dro_lower}}
 & 0.1 & 81.3 & 87.5 \\
 & \textbf{1.0} & 92.2 & 93.8 \\
 & 5.0 & 68.8 & 84.4 \\


\bottomrule
\end{tabular}
\vspace{1em}
\caption{Ablation with Worst Group Validation Accuracy.}
\label{tab:ablation}
\end{table}
\vspace{-3em}
\subsubsection{Additional results}
In Table~\ref{tab:dro_train_results}, we show the training accuracy for DRO and our method. As expected, our method performs worse on training due to hyper-parameter tuning on validation set.
\vspace{-1em}
\begin{table}[h]
    \centering
    \begin{tabular}{c|c|cc|cc}
    \hline\hline
    \textbf{Dataset} & $\ell_2$ Penalty & \multicolumn{2}{c|}{Average} & \multicolumn{2}{c}{Worst Group} \\
     &  & DRO & Ours & DRO & Ours \\
    \hline
    
    \multirow{2}{*}{CelebA}
     & 0.0 & 100.0 & 94.4 & 100.0 & 91.8 \\
     & 0.1 & 95.0 & 95.4 & 93.4 & 93.0 \\
    \hline
    
    \multirow{2}{*}{Waterbirds}
     & 0.0 & 100.0 & 96.1 & 100.0 & 92.6 \\
     & 1.0 & 99.1 & 95.2 & 97.5 & 90.3 \\
    \hline\hline
    \end{tabular}
    \vspace{1em}
    \caption{Average and worst-group accuracy for DRO and our method on the training set.}
    \label{tab:dro_train_results}
\end{table}

%% file: opt.bib
@article{nie2025augmented,
  title={An Augmented Lagrangian Value Function Method for Lower-level Constrained Stochastic Bilevel Optimization},
  author={Nie, Hantao and Li, Jiaxiang and Wen, Zaiwen},
  journal={arXiv preprint arXiv:2509.24249},
  year={2025}
}

@article{bertsekas1997nonlinear,
  title={Nonlinear programming},
  author={Bertsekas, Dimitri P},
  journal={Journal of the Operational Research Society},
  volume={48},
  number={3},
  pages={334--334},
  year={1997},
  publisher={Taylor \& Francis}
}

@article{kornowski2024first,
  title={First-order methods for linearly constrained bilevel optimization},
  author={Kornowski, Guy and Padmanabhan, Swati and Wang, Kai and Zhang, Jimmy and Sra, Suvrit},
  journal={Advances in neural information processing systems},
  volume={37},
}

@article{dvurechensky2026stochastic,
  title={Stochastic variance reduced extragradient methods for solving hierarchical variational inequalities},
  author={Dvurechensky, Pavel and Ebner, Andrea and Schnebel, Johannes Carl and Shtern, Shimrit and Staudigl, Mathias},
  journal={arXiv preprint arXiv:2602.13510},
  year={2026}
}

@article{khalafi2025regularized,
  title={Regularized Operator Extrapolation Method For Stochastic Bilevel Variational Inequality Problems},
  author={Khalafi, Mohammad and Boob, Digvijay},
  journal={arXiv preprint arXiv:2505.09778},
  year={2025}
}

@inproceedings{khanduri2023linearly,
  title={Linearly constrained bilevel optimization: A smoothed implicit gradient approach},
  author={Khanduri, Prashant and Tsaknakis, Ioannis and Zhang, Yihua and Liu, Jia and Liu, Sijia and Zhang, Jiawei and Hong, Mingyi},
  booktitle={International Conference on Machine Learning},
  pages={16291--16325},
  year={2023},
  organization={PMLR}
}

@inproceedings{xu2023efficient,
  title={Efficient gradient approximation method for constrained bilevel optimization},
  author={Xu, Siyuan and Zhu, Minghui},
  booktitle={Proceedings of the AAAI Conference on Artificial Intelligence},
  volume={37},
  number={10},
  pages={12509--12517},
  year={2023}
}

@inproceedings{yaoovercoming,
  title={Overcoming Lower-Level Constraints in Bilevel Optimization: A Novel Approach with Regularized Gap Functions},
  author={Yao, Wei and Yin, Haian and Zeng, Shangzhi and Zhang, Jin},
  booktitle={The Thirteenth International Conference on Learning Representations}
}

@inproceedings{yaoconstrained,
  title={Constrained Bi-Level Optimization: Proximal Lagrangian Value Function Approach and Hessian-free Algorithm},
  author={Yao, Wei and Yu, Chengming and Zeng, Shangzhi and Zhang, Jin},
  booktitle={The Twelfth International Conference on Learning Representations}
}

@article{jiang2024primal,
  title={A Primal-Dual-Assisted Penalty Approach to Bilevel Optimization with Coupled Constraints},
  author={Jiang, Liuyuan and Xiao, Quan and Tenorio, Victor M. and Real-Rojas, Fernando and Marques, Antonio G. and Chen, Tianyi},
  journal={Advances in Neural Information Processing Systems},
  volume={37},
  pages={95026--95066},
  year={2024}
}

@inproceedings{gao2022value,
  title={Value function based difference-of-convex algorithm for bilevel hyperparameter selection problems},
  author={Gao, Lucy L and Ye, Jane and Yin, Haian and Zeng, Shangzhi and Zhang, Jin},
  booktitle={International conference on machine learning},
  pages={7164--7182},
  year={2022},
  organization={PMLR}
}

@article{chen2022fast,
  title={A fast and convergent proximal algorithm for regularized nonconvex and nonsmooth bi-level optimization},
  author={Chen, Ziyi and Kailkhura, Bhavya and Zhou, Yi},
  journal={arXiv preprint arXiv:2203.16615},
  year={2022}
}

@inproceedings{tsaknakis2022implicit,
  title={An implicit gradient-type method for linearly constrained bilevel problems},
  author={Tsaknakis, Ioannis and Khanduri, Prashant and Hong, Mingyi},
  booktitle={ICASSP 2022-2022 IEEE International Conference on Acoustics, Speech and Signal Processing (ICASSP)},
  pages={5438--5442},
  year={2022},
  organization={IEEE}
}

@article{arbel2022non,
  title={Non-convex bilevel games with critical point selection maps},
  author={Arbel, Michael and Mairal, Julien},
  journal={Advances in Neural Information Processing Systems},
  volume={35},
  pages={8013--8026},
  year={2022}
}

@inproceedings{ouattara2018duality,
  title={Duality approach to bilevel programs with a convex lower level},
  author={Ouattara, Aur{\'e}lien and Aswani, Anil},
  booktitle={2018 Annual American Control Conference (ACC)},
  pages={1388--1395},
  year={2018},
  organization={IEEE}
}

@article{huang2023momentum,
  title={On momentum-based gradient methods for bilevel optimization with nonconvex lower-level},
  author={Huang, Feihu},
  journal={arXiv preprint arXiv:2303.03944},
  year={2023}
}

@article{huang2022enhanced,
  title={Enhanced bilevel optimization via bregman distance},
  author={Huang, Feihu and Li, Junyi and Gao, Shangqian and Huang, Heng},
  journal={Advances in Neural Information Processing Systems},
  volume={35},
  pages={28928--28939},
  year={2022}
}

@article{huang2023adaptive,
  title={Adaptive mirror descent bilevel optimization},
  author={Huang, Feihu},
  journal={arXiv preprint arXiv:2311.04520},
  year={2023}
}

@inproceedings{huang2024optimal,
  title={Optimal Hessian/Jacobian-free nonconvex-PL bilevel optimization},
  author={Huang, Feihu},
  booktitle={Proceedings of the 41st International Conference on Machine Learning},
  pages={19598--19621},
  year={2024}
}

@article{liu2022general,
  title={A general descent aggregation framework for gradient-based bi-level optimization},
  author={Liu, Risheng and Mu, Pan and Yuan, Xiaoming and Zeng, Shangzhi and Zhang, Jin},
  journal={IEEE Transactions on Pattern Analysis and Machine Intelligence},
  volume={45},
  number={1},
  pages={38--57},
  year={2022},
  publisher={IEEE}
}

@inproceedings{liu2021value,
  title={A value-function-based interior-point method for non-convex bi-level optimization},
  author={Liu, Risheng and Liu, Xuan and Yuan, Xiaoming and Zeng, Shangzhi and Zhang, Jin},
  booktitle={International conference on machine learning},
  pages={6882--6892},
  year={2021},
  organization={PMLR}
}

@article{liu2023value,
  title={Value-function-based sequential minimization for bi-level optimization},
  author={Liu, Risheng and Liu, Xuan and Zeng, Shangzhi and Zhang, Jin and Zhang, Yixuan},
  journal={IEEE Transactions on Pattern Analysis and Machine Intelligence},
  volume={45},
  number={12},
  pages={15930--15948},
  year={2023},
  publisher={IEEE}
}

@article{liu2021towards,
  title={Towards gradient-based bilevel optimization with non-convex followers and beyond},
  author={Liu, Risheng and Liu, Yaohua and Zeng, Shangzhi and Zhang, Jin},
  journal={Advances in Neural Information Processing Systems},
  volume={34},
  pages={8662--8675},
  year={2021}
}

@inproceedings{liu2023averaged,
  title={Averaged method of multipliers for bi-level optimization without lower-level strong convexity},
  author={Liu, Risheng and Liu, Yaohua and Yao, Wei and Zeng, Shangzhi and Zhang, Jin},
  booktitle={International Conference on Machine Learning},
  pages={21839--21866},
  year={2023},
  organization={PMLR}
}

@inproceedings{kwonpenalty,
  title={On Penalty Methods for Nonconvex Bilevel Optimization and First-Order Stochastic Approximation},
  author={Kwon, Jeongyeol and Kwon, Dohyun and Wright, Stephen and Nowak, Robert},
  booktitle={The Twelfth International Conference on Learning Representations}
}

@article{xiao2022alternating,
  title={Alternating implicit projected sgd and its efficient variants for equality-constrained bilevel optimization},
  author={Xiao, Quan and Shen, Han and Yin, Wotao and Chen, Tianyi},
  journal={arXiv preprint arXiv:2211.07096},
  year={2022}
}

@article{xiao2023alternating,
  title={An alternating optimization method for bilevel problems under the Polyak-{\L}ojasiewicz condition},
  author={Xiao, Quan and Lu, Songtao and Chen, Tianyi},
  journal={Advances in Neural Information Processing Systems},
  volume={36},
  pages={63847--63873},
  year={2023}
}

@inproceedings{chen2024finding,
  title={On finding small hyper-gradients in bilevel optimization: Hardness results and improved analysis},
  author={Chen, Lesi and Xu, Jing and Zhang, Jingzhao},
  booktitle={The Thirty Seventh Annual Conference on Learning Theory},
  pages={947--980},
  year={2024},
  organization={PMLR}
}

@article{yang2021provably,
  title={Provably faster algorithms for bilevel optimization},
  author={Yang, Junjie and Ji, Kaiyi and Liang, Yingbin},
  journal={Advances in Neural Information Processing Systems},
  volume={34},
  pages={13670--13682},
  year={2021}
}

@article{doi:10.1137/S1052623403425629,
author = {Nemirovski, Arkadi},
title = {Prox-Method with Rate of Convergence O(1/t) for Variational Inequalities with Lipschitz Continuous Monotone Operators and Smooth Convex-Concave Saddle Point Problems},
journal = {SIAM Journal on Optimization},
volume = {15},
number = {1},
pages = {229-251},
year = {2004}
}

@article{JMLR:v26:23-1104,
  author  = {Lesi Chen and Yaohua Ma and Jingzhao Zhang},
  title   = {Near-Optimal Nonconvex-Strongly-Convex Bilevel Optimization with Fully First-Order Oracles},
  journal = {Journal of Machine Learning Research},
  year    = {2025},
  volume  = {26},
  number  = {109},
  pages   = {1--56},
  url     = {http://jmlr.org/papers/v26/23-1104.html}
}

@inproceedings{kwon2023fully,
  title={A fully first-order method for stochastic bilevel optimization},
  author={Kwon, Jeongyeol and Kwon, Dohyun and Wright, Stephen and Nowak, Robert D},
  booktitle={International Conference on Machine Learning},
  pages={18083--18113},
  year={2023},
  organization={PMLR}
}

@article{shen2025penalty,
  title={On penalty-based bilevel gradient descent method},
  author={Shen, Han and Xiao, Quan and Chen, Tianyi},
  journal={Mathematical Programming},
  pages={1--51},
  year={2025},
  publisher={Springer}
}

@inproceedings{shen2023penalty,
  title={On penalty-based bilevel gradient descent method},
  author={Shen, Han and Chen, Tianyi},
  booktitle={International conference on machine learning},
  pages={30992--31015},
  year={2023},
  organization={PMLR}
}

@article{zhang2024robust,
  title={Robust accelerated primal-dual methods for computing saddle points},
  author={Zhang, Xuan and Aybat, Necdet Serhat and G{\"u}rb{\"u}zbalaban, Mert},
  journal={SIAM Journal on Optimization},
  volume={34},
  number={1},
  pages={1097--1130},
  year={2024},
  publisher={SIAM}
}

@article{zhang2022sapd+,
  title={Sapd+: An accelerated stochastic method for nonconvex-concave minimax problems},
  author={Zhang, Xuan and Aybat, Necdet Serhat and Gurbuzbalaban, Mert},
  journal={Advances in Neural Information Processing Systems},
  volume={35},
  pages={21668--21681},
  year={2022}
}

@inproceedings{finn2017model,
  title={Model-agnostic meta-learning for fast adaptation of deep networks},
  author={Finn, Chelsea and Abbeel, Pieter and Levine, Sergey},
  booktitle={International conference on machine learning},
  pages={1126--1135},
  year={2017},
  organization={PMLR}
}

@article{collins2020task,
  title={Task-robust model-agnostic meta-learning},
  author={Collins, Liam and Mokhtari, Aryan and Shakkottai, Sanjay},
  journal={Advances in Neural Information Processing Systems},
  volume={33},
  pages={18860--18871},
  year={2020}
}

@article{rajeswaran2019meta,
  title={Meta-learning with implicit gradients},
  author={Rajeswaran, Aravind and Finn, Chelsea and Kakade, Sham M and Levine, Sergey},
  journal={Advances in neural information processing systems},
  volume={32},
  year={2019}
}

@article{duchi2021learning,
  title={Learning models with uniform performance via distributionally robust optimization},
  author={Duchi, John C and Namkoong, Hongseok},
  journal={The Annals of Statistics},
  volume={49},
  number={3},
  pages={1378--1406},
  year={2021},
  publisher={Institute of Mathematical Statistics}
}

@article{goodfellow2014generative,
  title={Generative adversarial nets},
  author={Goodfellow, Ian J and Pouget-Abadie, Jean and Mirza, Mehdi and Xu, Bing and Warde-Farley, David and Ozair, Sherjil and Courville, Aaron and Bengio, Yoshua},
  journal={Advances in neural information processing systems},
  volume={27},
  year={2014}
}

@inproceedings{madry2018towards,
  title={Towards Deep Learning Models Resistant to Adversarial Attacks},
  author={Madry, Aleksander and Makelov, Aleksandar and Schmidt, Ludwig and Tsipras, Dimitris and Vladu, Adrian},
  booktitle={International Conference on Learning Representations},
  year={2018}
}

@article{ben2013robust,
  title={Robust solutions of optimization problems affected by uncertain probabilities},
  author={Ben-Tal, Aharon and Den Hertog, Dick and De Waegenaere, Anja and Melenberg, Bertrand and Rennen, Gijs},
  journal={Management Science},
  volume={59},
  number={2},
  pages={341--357},
  year={2013},
  publisher={INFORMS}
}

@article{li2024learning,
  title={Learning reward and policy jointly from demonstration and preference improves alignment},
  author={Li, Chenliang and Zeng, Siliang and Liao, Zeyi and Li, Jiaxiang and Kang, Dongyeop and Garcia, Alfredo and Hong, Mingyi},
  journal={arXiv preprint arXiv:2406.06874},
  year={2024}
}

@article{lu2025solving,
  author  = {Lu, Zhaosong and Mei, Sanyou},
  title   = {Solving Bilevel Optimization via Sequential Minimax Optimization},
  journal = {Mathematics of Operations Research},
  year    = {2026},
  doi     = {10.1287/moor.2024.0521},
  url     = {https://doi.org/10.1287/moor.2024.0521},
  note    = {Published online January 6, 2026}
}

@article{lu2024firstorder,
  author  = {Zhaosong Lu and Sanyou Mei},
  title   = {First-Order Penalty Methods for Bilevel Optimization},
  journal = {SIAM Journal on Optimization},
  volume  = {34},
  number  = {2},
  pages   = {1937--1969},
  year    = {2024},
  doi     = {10.1137/23M1566753},
  url     = {https://doi.org/10.1137/23M1566753}
}

@article{allende2013solving,
  title={Solving bilevel programs with the KKT-approach},
  author={Allende, Gemayqzel Bouza and Still, Georg},
  journal={Mathematical programming},
  volume={138},
  number={1},
  pages={309--332},
  year={2013},
  publisher={Springer}
}

@inproceedings{bennett2008bilevel,
  author    = {Bennett, Kristin P. and Kunapuli, Gautam and Hu, Jing and Pang, Jong-Shi},
  title     = {Bilevel optimization and machine learning},
  booktitle = {IEEE World Congress on Computational Intelligence},
  pages     = {25--47},
  publisher = {Springer},
  year      = {2008}
}

@inproceedings{bertinetto2018meta,
  author    = {Bertinetto, Luca and Henriques, Joao F. and Torr, Philip and Vedaldi, Andrea},
  title     = {Meta-learning with differentiable closed-form solvers},
  booktitle = {International Conference on Learning Representations},
  year      = {2018}
}

@inproceedings{chen2022single,
  author    = {Chen, Tianyi and Sun, Yuejiao and Xiao, Quan and Yin, Wotao},
  title     = {A single-timescale method for stochastic bilevel optimization},
  booktitle = {International Conference on Artificial Intelligence and Statistics},
  pages     = {2466--2488},
  year      = {2022}
}

@article{chen2021closing,
  title={Closing the gap: Tighter analysis of alternating stochastic gradient methods for bilevel problems},
  author={Chen, Tianyi and Sun, Yuejiao and Yin, Wotao},
  journal={Advances in Neural Information Processing Systems},
  volume={34},
  pages={25294--25307},
  year={2021}
}

@inproceedings{franceschi2018bilevel,
  author    = {Franceschi, Luca and Frasconi, Paolo and Salzo, Saverio and Grazzi, Riccardo and Pontil, Massimiliano},
  title     = {Bilevel programming for hyperparameter optimization and meta-learning},
  booktitle = {International Conference on Machine Learning},
  pages     = {1568--1577},
  year      = {2018}
}

@article{ghadimi2018approximation,
  author  = {Ghadimi, Saeed and Wang, Mengdi},
  title   = {Approximation methods for bilevel programming},
  journal = {arXiv preprint arXiv:1802.02246},
  year    = {2018}
}

@inproceedings{grazzi2020iteration,
  author    = {Grazzi, Riccardo and Franceschi, Luca and Pontil, Massimiliano and Salzo, Saverio},
  title     = {On the iteration complexity of hypergradient computation},
  booktitle = {International Conference on Machine Learning},
  pages     = {3748--3758},
  year      = {2020}
}

@article{guo2021randomized,
  author  = {Guo, Zhishuai and Hu, Quanqi and Zhang, Lijun and Yang, Tianbao},
  title   = {Randomized stochastic variance-reduced methods for multi-task stochastic bilevel optimization},
  journal = {arXiv preprint arXiv:2105.02266},
  year    = {2021}
}

@article{yang2024bilevel,
  title={Bilevel reinforcement learning via the development of hyper-gradient without lower-level convexity},
  author={Yang, Yan and Gao, Bin and Yuan, Ya-xiang},
  journal={arXiv preprint arXiv:2405.19697},
  year={2024}
}

@article{hong2023two,
  author  = {Hong, Mingyi and Wai, Hoi-To and Wang, Zhaoran and Yang, Zhuoran},
  title   = {A two-timescale stochastic algorithm framework for bilevel optimization},
  journal = {SIAM Journal on Optimization},
  volume  = {33},
  number  = {1},
  pages   = {147--180},
  year    = {2023}
}

@article{hu2023improved,
  title={An improved unconstrained approach for bilevel optimization},
  author={Hu, Xiaoyin and Xiao, Nachuan and Liu, Xin and Toh, Kim-Chuan},
  journal={SIAM Journal on Optimization},
  volume={33},
  number={4},
  pages={2801--2829},
  year={2023},
  publisher={SIAM}
}

@inproceedings{ji2021bilevel,
  author    = {Ji, Kaiyi and Yang, Junjie and Liang, Yingbin},
  title     = {Bilevel optimization: Convergence analysis and enhanced design},
  booktitle = {ICML},
  pages     = {4882--4892},
  year      = {2021}
}

@inproceedings{khanduri2021near,
  author    = {Khanduri, Prashant and Zeng, Siliang and Hong, Mingyi and Wai, Hoi-To and Wang, Zhaoran and Yang, Zhuoran},
  title     = {A near-optimal algorithm for stochastic bilevel optimization via double-momentum},
  booktitle = {NeurIPS},
  year      = {2021}
}

@inproceedings{kovalev2022first,
  author    = {Kovalev, Dmitry and Gasnikov, Alexander},
  title     = {The first optimal algorithm for smooth minimax optimization},
  booktitle = {NeurIPS},
  year      = {2022}
}

@inproceedings{li2022fully,
  title={A fully single loop algorithm for bilevel optimization without hessian inverse},
  author={Li, Junyi and Gu, Bin and Huang, Heng},
  booktitle={Proceedings of the AAAI Conference on Artificial Intelligence},
  volume={36},
  number={7},
  pages={7426--7434},
  year={2022}
}

@article{li2023novel,
  title={A novel approach for bilevel programs based on Wolfe duality},
  author={Li, Yuwei and Lin, Gui-Hua and Zhang, Jin and Zhu, Xide},
  journal={arXiv preprint arXiv:2302.06838},
  year={2023}
}

@inproceedings{liu2022bome,
  author    = {Liu, Bo and Ye, Mao and Wright, Stephen and Stone, Peter and Liu, Qiang},
  title     = {BOME! bilevel optimization made easy},
  booktitle = {NeurIPS},
  year      = {2022}
}

@inproceedings{liu2018darts,
  author    = {Liu, Hanxiao and Simonyan, Karen and Yang, Yiming},
  title     = {DARTS: Differentiable architecture search},
  booktitle = {ICLR},
  year      = {2018}
}

@article{lu2023first,
  author  = {Zhaosong Lu and Sanyou Mei},
  title   = {A First-Order Augmented Lagrangian Method for Constrained Minimax Optimization},
  journal = {Mathematical Programming},
  volume  = {213},
  number  = {1},
  pages   = {1063--1104},
  year    = {2025},
  month   = sep,
  doi     = {10.1007/s10107-024-02163-3},
  url     = {https://doi.org/10.1007/s10107-024-02163-3}
}

@inproceedings{pedregosa2016hyperparameter,
  author    = {Pedregosa, Fabian},
  title     = {Hyperparameter optimization with approximate gradient},
  booktitle = {ICML},
  year      = {2016}
}

@article{sow2022primal,
  title={A primal-dual approach to bilevel optimization with multiple inner minima},
  author={Sow, Daouda and Ji, Kaiyi and Guan, Ziwei and Liang, Yingbin},
  journal={arXiv preprint arXiv:2203.01123},
  year={2022}
}

@article{ye2023difference,
  title={Difference of convex algorithms for bilevel programs with applications in hyperparameter selection},
  author={Ye, Jane J. and Yuan, Xiaoming and Zeng, Shangzhi and Zhang, Jin},
  journal={Mathematical Programming},
  volume={198},
  number={2},
  pages={1583--1616},
  year={2023},
  publisher={Springer}
}

@inproceedings{sagawa2020distributionally,
  author    = {Shiori Sagawa and Pang Wei Koh and Tatsunori B. Hashimoto and Percy Liang},
  title     = {Distributionally Robust Neural Networks for Group Shifts: {On} the Importance of Regularization for Worst-Case Generalization},
  booktitle = {International Conference on Learning Representations (ICLR)},
  year      = {2020}
}

@inproceedings{ren2018learning,
  title     = {Learning to Reweight Examples for Robust Deep Learning},
  author    = {Ren, Mengye and Zeng, Wenyuan and Yang, Bin and Urtasun, Raquel},
  booktitle = {International Conference on Machine Learning (ICML)},
  pages     = {4334--4343},
  year      = {2018},
  organization={PMLR}
}

@inproceedings{kirichenko2023last,
  title     = {Last Layer Re-Training is Sufficient for Robustness to Spurious Correlations},
  author    = {Polina Kirichenko and Pavel Izmailov and Andrew Gordon Wilson},
  booktitle = {International Conference on Learning Representations (ICLR)},
  year      = {2023}
}

@inproceedings{kang2019decoupling,
  title     = {Decoupling Representation and Classifier for Long-Tailed Recognition},
  author    = {Kang, Bingyi and Xie, Saining and Rohrbach, Marcus and Yan, Zhicheng and Gordo, Albert and Feng, Jiashi and Kalantidis, Yannis},
  booktitle = {International Conference on Learning Representations (ICLR)},
  year      = {2020}
}

@inproceedings{liu2015faceattributes,
  title     = {Deep Learning Face Attributes in the Wild},
  author    = {Liu, Ziwei and Luo, Ping and Wang, Xiaogang and Tang, Xiaoou},
  booktitle = {Proceedings of the IEEE International Conference on Computer Vision (ICCV)},
  pages     = {3730--3738},
  year      = {2015}
}

@article{zhou2017places,
author = {Zhou, Bolei and Lapedriza, Àgata and Khosla, Aditya and Oliva, Aude and Torralba, Antonio},
year = {2017},
month = {07},
pages = {1-1},
title = {Places: A 10 Million Image Database for Scene Recognition},
volume = {PP},
journal = {IEEE Transactions on Pattern Analysis and Machine Intelligence},
doi = {10.1109/TPAMI.2017.2723009}
}

@techreport{wah2011cub,
	Title = {The Caltech-UCSD Birds-200-2011 Dataset},
	Author = {Wah, C. and Branson, S. and Welinder, P. and Perona, P. and Belongie, S.},
	Year = {2011},
	Institution = {California Institute of Technology},
	Number = {CNS-TR-2011-001}
}

@inproceedings{he2016resnet,
  title={Deep residual learning for image recognition},
  author={He, Kaiming and Zhang, Xiangyu and Ren, Shaoqing and Sun, Jian},
  booktitle={Proceedings of the IEEE conference on computer vision and pattern recognition},
  pages={770--778},
  year={2016}
}
